\title{Orthogonal systems in finite graphs}
\author{ \textsf{Andrew J. Duncan}
 \and \textsf{Ilya V. Kazachkov}
 \and \textsf{Vladimir N. Remeslennikov}}
\def\nul{\emptyset }
\def\D{\Delta }
\def\b{\beta }
\def\i{\iota }
\def\G{\Gamma }
\def\g{\gamma }
\def\a{\alpha }
\def\t{\tau }
\def\s{\sigma }
\def\fC{{\textswab C}}
\newtheorem{thm}{Theorem}[section]
\newtheorem{lem}[thm]{Lemma}
\newtheorem{cor}[thm]{Corollary}
\newtheorem{prop}[thm]{Proposition}
\newtheorem{defn}[thm]{Definition}
\newtheorem*{defn*}{Definition}
\newtheorem{exam}[thm]{Example}
\newenvironment{expl}{\begin{exam} \rm}{\end{exam}}
\newtheorem{rem}[thm]{Remark}
\numberwithin{equation}{section}
\numberwithin{figure}{section}
\newcommand{\tc}{\texttt{c}}
\newcommand{\Aut}{\operatorname{Aut}}
\newcommand{\cd}{\operatorname{cdim}}
\newcommand{\NN}{\ensuremath{\mathbb{N}}}
\newcommand{\cC}{\mathcal{C}}
\newcommand{\cO}{\mathcal{O}}
\newcommand{\la}{\langle}
\newcommand{\ra}{\rangle}
\newcommand{\cl}{\operatorname{cl}}
\newcommand{\icl}{\operatorname{icl}}
\newcommand{\acl}{\operatorname{acl}}
\newcommand{\fcl}{\operatorname{fcl}}
\newcommand{\CS}{
{L}}
\newcommand{\maps}{\rightarrow}
\newcommand{\ov}[1]{\overline{#1}}
\newcommand{\be}{\begin{enumerate}}
\newcommand{\ee}{\end{enumerate}}
\newcommand{\id}{\operatorname{id}}
\newcommand{\ilya}[1]{\marginpar{{\LARGE $\mathbf{\Downarrow IVK}$}}\textbf{ #1}}
\newlength{\nts}
\newlength{\rts}
\newlength{\lts}
\begin{document}

\maketitle

\begin{abstract}
Let $\Gamma$ be a finite graph and $G_\Gamma$ be the corresponding
free partially commutative group. In this paper we
construct orthogonality theory for graphs and free partially
commutative groups. The theory developed here provides tools for the
study of the structure of the centraliser  lattice of partially commutative
groups.
\end{abstract}
 \tableofcontents
%
\subsection*{Glossary of Notation}
\begin{tabbing}
$\G$ \hspace*{\lts}\= ---  \hspace*{\rts}\=\parbox{9.5cm}{a finite undirected graph with vertex set $X$}\\
$\G_1\oplus \G_2$ \> --- \> the join of graphs $\G_1$ and $\G_2$
\\[\nts]
  $C_G(S)$ \> --- \> \parbox[t]{9.5cm}{the centraliser of a a subset $S$ of $G$} \\[\nts]
  $\fC(G)$ \> --- \> \parbox[t]{9.5cm}{the set of centralisers of a group $G$} \\[\nts]
  $G$ or $G(\G)$ \> --- \> \parbox[t]{9.5cm}{the (free) partially commutative group with underlying graph $\G$}\\[\nts]
  $\lg(w)$ \> --- \> \parbox[t]{9.5cm}{the length of a geodesic word $w'$ such that $w =_{G} w'$}\\[\nts]
  $d(x,y)$ \> --- \> \parbox[t]{9.5cm}{the distance from $x$ to $y$, $x,y \in \G$}\\[\nts]
  $\cO^Z(Y)$ \> --- \> \parbox[t]{9.5cm}{the orthogonal complement of $Y$ in 
$Z$, i.e.  $\{u\in Z|d(u,y)\le 1, \textrm{ for all } y\in Y\}$}\\[\nts]
  $Y^\perp$ \> --- \> \parbox[t]{9.5cm}{the orthogonal complement of $Y$ in $X$,
  $\cO^X(Y)$} \\[\nts]
  $\cl^Z(Y)$ \> --- \> \parbox[t]{9.5cm}{the closure of $Y$ in $Z$ with
  respect to $\cO^Z(Y)$, i.e. $\cl^Z(Y)=\cO^Z(\cO^Z(Y))$} \\[\nts]
  $\cl(Y)$ \> --- \> \parbox[t]{9.5cm}{the closure of $Y$ in $X$, i.e. $\cl(Y)=Y^{\perp \perp}$}\\[\nts]
  $\CS(\G)$ or $L$ \> --- \> \parbox[t]{9.5cm}{the lattice of closed sets of $\G$}\\[\nts]
  $\ov{X}$ \> --- \> \parbox[t]{9.5cm}{the set $X\cup\{t\}$}\\[\nts]
  $\ov\G$ \> --- \> \parbox[t]{9.5cm}{the graph $(\ov X, E(\G)\cup E_t)$, $E_t=\{(t,x)|x\in J_t\}$, $J_t\subseteq X$}\\[\nts]
  $\ov L$ \> --- \> \parbox[t]{9.5cm}{the lattice of closed sets, 
$\CS(\ov \G)$, of $\ov \G$ }\\[\nts]
  $ L_t$\> --- \> \parbox[t]{9.5cm}{$\{Y\subseteq X\mid Y=C\cap J_t, \textrm{ where } C\in
  L\}$}\\[\nts]
$ \tilde{L}$\> --- \> \parbox[t]{9.5cm}{the set $L\cup L_t$}\\[\nts]
  $h(L)$  \> --- \> \parbox[t]{9.5cm}{the height of a lattice $L$}\\[\nts]
  $Y\sim_\perp Z$  \> --- \> \parbox[t]{9.5cm}{$Y,Z\subseteq X$ are $\perp$-equivalent in
  $X$, that is $Y^\perp=Z^\perp$}\\[\nts]
  $\acl(S)$  \> --- \> \parbox[t]{9.5cm}{the Abelian closure of a simplex $S$, that is the union of $T\subseteq X$ such that $S\sim_\perp T$}\\[\nts]
  $Y\sim_o Z$ \> --- \> \parbox[t]{9.5cm}{subsets $Y,Z\subseteq X$ are $o$-equivalent,
  i.e. $Y^\perp \smallsetminus Y =Z^\perp \smallsetminus Z$}\\[\nts]
  $\fcl(A)$ \> --- \> \parbox[t]{9.5cm}{the free-closure of a free co-simplex $A$, that is the union
of all free co-simplexes $B$ such that $A\sim_o B$}\\[\nts]
  $[x]_\perp$ \> --- \> \parbox[t]{9.5cm}{the $\perp$-equivalence class
  of $x$, that is $\{y\in X\ \mid \ x\sim_\perp y\}$}\\[\nts]
  $[x]_o$ \> --- \> \parbox[t]{9.5cm}{the $o$-equivalence class
  of $x$, that is $\{y\in X\ \mid \ x\sim_o y\}$}\\[\nts]
  $x \sim y$ \> --- \> \parbox[t]{9.5cm}{ $x,y \in X$ are equivalent,
  i.e. either $x\sim_\perp y$ or $x\sim_o y$}\\[\nts]
  $[x]$ \> --- \> \parbox[t]{9.5cm}{the equivalence class of $x$ with
  respect to $\sim$}\\[\nts]
  $\G^{\tc}$ \> --- \> \parbox[t]{9.5cm}{the compression of the graph
  $\G$}
\end{tabbing}
\section*{Introduction}
This paper is a continuation of  a series of papers \cite{DKR1,DKR2}
where the authors develop the theory of free partially commutative
groups.

Free partially commutative groups arise in many branches of
mathematics and computer science and consequently are known
by a variety of names: \emph{semifree groups}, \emph{graph
groups}, \emph{right-angled Artin groups}, \emph{trace groups},
\emph{locally free groups}. We refer the reader to \cite{Diekert},
\cite{EKR} and references there for a survey of these groups, which we
shall refer to here as partially
commutative groups.

The analysis of proofs of results on partially commutative
groups shows that these rely heavily upon two main ideas: divisibility and
orthogonality. The divisibility theory of  partially commutative
groups has been formalised in \cite{EKR} and is a convenient tool
for solving major algorithmic problems. The idea of considering
orthogonal complements of subsets of vertices of the underlying
graph of a  partially commutative
implicitly occurs in many papers, see for instance,
\cite{Servatius, Laurence} and  also  \cite{HBIG} pp. 650-651. In
this paper we formalise this idea and establish the main results of
orthogonality theory for graphs.

\begin{defn*}
Let $G(\G$) be the partially commutative group with underlying
graph $\Gamma=(X,E)$. For a vertex $x\in X$ we define $x^\perp$ to
be the set of all vertices of $\Gamma$ connected with $x$. For a
subset $Y\subseteq X$ we define
\[Y^\perp=\bigcap\limits_{y\in Y} y^\perp.\]
Let $\CS(\Gamma)$ be the set of all subsets $Z$ of $X$ of the form
$Y^\perp$ for some $Y\subseteq X$. We call $\CS(\Gamma)$ the
lattice of closed sets of $\Gamma$.
\end{defn*}

The importance of the lattice of closed sets $L(\Gamma)$ for the
theory of partially commutative groups is a consequence of the
the fact that the lattice $\CS(\Gamma)$ is isomorphic to the lattice
of parabolic centralisers (see Section \ref{sec:prelim}) of $G(\G)$ which, in
turn, is crucial for study of the group $G(\G)$ itself and its
automorphism group $\Aut(G(\G))$.

The main problem that we consider in this paper is how the lattice
of closed set behaves when one joins a vertex $v$ to the graph
$\Gamma$ to form a new graph $\bar \G$.
Naturally this depends on which vertices of $\G$
are joined to $v$.  In particular, we prove that the lattices $\CS=\CS(\G)$
and $\ov \CS=\ov{\CS(\G)}$ are
isomorphic if and only if $v$ is joined to the orthogonal complement
of a simplex $S\subset X$; see Theorem \ref{thm:ginj}.

Moreover, we prove that the height $h(\ov L)$ of the extended
lattice $\ov L$  is $h(\ov L)=h(L)+m$, where $m=0,1$ or
$2$, see Theorem \ref{thm:hL}.

In Sections \ref{sec:abinf} and \ref{sec:FI} we introduce operations
of free and Abelian inflation and deflation on graphs and prove that
the lattice of closed sets $L$ behaves nicely under these operations.
We then introduce the notion of compression of a graph $\Gamma$
which plays an important role in the study of  partially
commutative groups and prove that the lattices of closed sets for
the graph $\Gamma$ and its compression are closely related.
The compression of a graph allows us to give a decomposition of
the automorphism group of the graph as a semi-direct product of the
automorphism group of the compression with a direct sum of symmetric groups.

The results of the current paper play a key role in two papers of
authors which are currently under preparation: one on the structure
of lattices of centralisers of a given partially commutative
group $G$, the other on the structure of the automorphism group
$\Aut(G)$, \cite{DKR4,DKR5}.

A major part of our research on partially commutative groups,
\cite{DKR1,DKR2,DKR3,DKR4,DKR5} was carried out while the second and
the third authors were visiting the University of
Newcastle Upon Tyne, thanks to the support of the EPSRC  grants 
EP/D065275/1 and GR/S61900/01.

\section{Preliminaries}\label{sec:prelim}
\subsection{Graphs}\label{sec:graph}
Graph will mean undirected, finite graph throughout this paper.
%
If $x$ and $y$ are vertices of a  graph then we
define the {\em distance} $d(x,y)$ from $x$ to
$y$ to be the minimum of the lengths of all paths from $x$ to $y$ in $\G$.
A subgraph $S$ of a graph $\G$ is called a {\em full} subgraph if vertices $a$ and $b$ of
$S$ are joined by an edge of $S$ whenever they are joined by an edge of $\G$.

Let $\G$ be a graph with $V(\G)=X$.
A subset $Y$ of $X$ is called a {\em simplex}
if the full subgraph of $\G$ with vertices $Y$ is isomorphic to  a
complete graph. A maximal simplex is called a {\em clique}.
A subset $Y$ of $X$ is called a {\em free co-simplex}
if the full subgraph of $\G$ with vertices $Y$ is isomorphic to  the
null graph. The reason why the word ``free'' is necessary here will
become apparent later (see Section \ref{sec:FI}).

Let $\G_i$ be a graph with vertex set $X_i$, for $i=1,2$.
The {\em join} $\G_1\oplus \G_2$ of $\G_1$ and $\G_2$ is the graph with vertex
set the disjoint union $X_1\sqcup X_2$ and edge set consisting
of all the edges of $\G_i$, for $i=1$ and $2$ and an edge joining
$x_1$ to $x_2$ for all $x_1\in X_1$ and $x_2\in X_2$.

\subsection{Lattices}\label{sec:lattice}
Let $P$ be a partially ordered set with order relation $\le$.
Then $P$ is said to be a {\em lattice} if every pair of elements
of $P$ has a unique infimum and a unique supremum.
We usually write $s\wedge t$ and $s\vee t$ for the infimum and supremum,
respectively, of $s$ and $t$.

A lattice is said to be {\em bounded} if it has both a minimum and a maximum
element. An {\em ascending chain} in a lattice is  a sequence of elements
$a_0,a_1,\ldots $ such that $a_i<a_{i+1}$. The {\em length} of a finite
chain $a_0<\cdots <a_k$ is said to be $k$. {\em Descending chains } are defined
analogously. A lattice may be bounded and have infinite ascending or descending
chains (or both). The {\em height} of a lattice $L$ is defined to be the maximum
of the lengths of all chains  in $L$, if it exists, and $\infty$ otherwise.

A {\em homomorphism of partially ordered sets}
is a map from one partially ordered set to another which preserves the order relation.
If $P$ and $Q$ are lattices then a homomorphism of partially ordered sets
$f:P\maps Q$ is called a {\em homomorphism of lattices} if $f(s\vee t)=f(s)\vee f(t)$ and
$s\wedge t= f(s)\wedge f(t)$, for all $s,t\in P$.
For further details on lattices we refer the reader to
\cite{Bi}.

\subsection{Centraliser Lattices}\label{sec:centraliser}
If $S$ is a subset of a group $G$ then the centraliser
of $S$ in $G$ is
$C_G(S)=\{g\in G: gs=sg, \textrm{ for all } s\in S\}$. We write
$C(S)$ instead of $C_G(S)$ when the meaning is clear.
Let $\fC(G)$ denote the set of centralisers of a group $G$.
The relation of inclusion then defines  a partial order `$\le$' on $\fC(G)$.
We define the infimum of a pair of elements of $\fC(G)$ in the
obvious way as:
    \[
        C(M_1) \wedge C(M_2)= C(M_1)\cap C(M_2)=C( M_1 \cup M_2).
    \]
Moreover  the supremum $C(M_1) \vee C(M_2)$ of elements
$C(M_1)$ and $C(M_2)$ of $\fC(G)$ may
be
defined to be the intersection of all centralisers containing $C(M_1)$ and $C(M_2)$.
Then  $C(M_1) \vee C(M_2)$ is  minimal among centralisers containing $C(M_1)$ and $C(M_2)$.
These definitions make $\fC(G)$ into a lattice, called the
{\em centraliser lattice} of $G$. This lattice is bounded as it has
a greatest element, $G= C(1)$, and a least element, $Z(G)$, the centre
of $G$. Lattices of centralisers have been extensively studied;
a brief survey of results
can be found in \cite{DKR1}.

The {\em centraliser dimension} of a group $G$ is defined to be the height of the centraliser lattice of $G$  and is denoted $\cd(G)$.
Centralisers have the properties that, for all subsets $S$ and $T$ of $G$,
if $S\subseteq T$ then $C(S)\ge C(T)$ and $C(C(C(S)))=C(S)$.
Therefore if $C_1<C_2<\cdots $ is an ascending chain then
$\cdots >C(C_2)>C(C_1)$ is a descending chain and both chains are either
infinite or of the same length.
Thus $\cd(G)$ is the
maximum of the lengths of descending chains of centralisers in $G$, if such
a maximum exists, and is infinite otherwise.
\subsection{Partially Commutative Groups}
Let $\G$ be a finite, undirected, simple graph. Let $X=V(\G)$ be the set of vertices
of $\G$ and let $F(X)$ be the free group on $X$.
For elements $g,h$ of a group we denote the commutator $g^{-1}h^{-1}gh$
of $g$ and $h$ by $[g,h]$. Let
\[
R=\{[x_i,x_j]\in F(X)\mid x_i,x_j\in X \textrm{ and there is an edge from } x_i \textrm{ to } x_j
\textrm{ in } \G\}.
\]
We define
the {\em partially commutative group with {\rm(}commutation\rm{)} graph } $\G$ to be the
group $G(\G)$ with presentation
$
\left< X\mid R\right>.
$
(Note that these are the groups which are
called finitely generated free partially commutative groups
in \cite{DK}.)

Let $\G$ be a simple graph, $G=G(\G)$ and
let $w\in G$. Denote by $\lg(w)$ the length of a {\em geodesic} word
in $X\cup X^{-1}$ that represents the element $w\in G$: that is  a
word of minimal length amongst those representing $w$. We say that   $w \in G$
 is \emph{cyclically minimal} if and only if
$$
\lg(g^{-1}wg) \ge \lg(w)
$$
for every $g \in G$.

The centraliser dimension of partially commutative groups
is finite because
all partially commutative groups are linear \cite{H}
 and all linear groups have finite 
centraliser dimension, \cite{MS}.
In \cite{DKR2} it is shown that the centraliser dimension of a
partially commutative group is easy to calculate and
depends only on the centralisers of subsets of $X$. If $Y\subseteq X$ then
we call $C(Y)$ a {\em canonical parabolic} centraliser.
It is not hard to prove that
the intersection of two canonical parabolic centralisers is again a
canonical parabolic centraliser and,
as shown in \cite{DKR2}, the supremum, in $\fC(G)$,
of two canonical parabolic centralisers
is also a canonical parabolic centraliser. Hence the set $\fC(X;G)$ of
canonical parabolic centralisers
forms a sublattice of $\fC(G)$. In \cite[Theorem 3.3]{DKR2} it is shown that
the centraliser dimension of $G$ is equal to the height of the lattice  $\fC(X;G)$. In \cite{DKR3} we give a short proof of this fact using the
methods developed in this paper and give a characterisation
of centralisers of arbitrary subsets of a partially commutative group.
Moreover in \cite{DKR4, DKR5} we use these tools to give a description
of the automorphism group of a partially commutative group.

\section{The Lattice of Closed Subsets of a  Graph} 
\label{sec:2}

\subsection{Orthogonal Systems, Closure and Closed Sets}\label{sec:OC}
As before let $\G$ be a finite, undirected, simple graph, with vertices
$X$, and let $G=G(\G)$ be the partially commutative group
defined by $\Gamma$. Given vertices $x,y$ in the same
connected component of $\G$ we define the
{\em distance} $d(x,y)$ from $x$ to $y$ to be the minimum of
the lengths of paths from $x$ to $y$. If $x$ and $y$ are
in distinct connected components then we define $d(x,y)=\infty$.

Let $Y$ and $Z$ be subsets of $X$. We define the {\em orthogonal complement}
of $Y$ in $Z$ to be
\[\cO^Z(Y)=\{u\in Z|d(u,y)\le 1, \textrm{ for all } y\in Y\}.\]
By convention we set $\cO^Z(\nul)=Z$.
If $Z=X$ we call $\cO^X(Y)$ the orthogonal complement of $Y$, and if
no ambiguity arises then we shall sometimes write $Y^\perp$ instead
of $\cO^X(Y)$. Also, if every vertex of $Z$ is either in $Y$ or is joined
 by an edge of $\G$ to
every vertex of $Y$ then we write $[Y,Z]=1$. Thus $[Y,Z]=1$ if and only
if $Z\subseteq \cO^X(Y)$ if and only if every element of $Y$ commutes with
every element of $Z$ in the group $G$. For future reference we record some
of the basic properties of orthogonal complements in the next lemma.
\begin{lem}\label{lem:orth} Let $Y$, $Y_1$, $Y_2$ and
$Z$ be subsets of $X$.
\begin{enumerate}
\item\label{it:orth1} If $Y\subseteq Z$ then $Y\subseteq \cO^Z(\cO^Z(Y))$.
\item\label{it:orth2} If $Y\subseteq Z$ then $\cO^Z(Y)=
    \cO^Z(\cO^Z(\cO^Z(Y)))$.
\item\label{it:orth3} If $Y_1\subseteq Y_2$ then $\cO^Z(Y_2) \subseteq
        \cO^Z(Y_1)$.
\item\label{it:orth4}   $\cO^Z(Y_1\cap Y_2)\supseteq \cO^Z(Y_1)
        \cup \cO^Z(Y_2)$.
\item\label{it:orth5}  
    $\cO^Z(Y_1\cup Y_2)=\cO^Z(Y_1)
        \cap \cO^Z(Y_2)$.
\item\label{it:orth6} $Y$ is a simplex if and only if $Y\subseteq Y^\perp$.
\item\label{it:orth7} $Y$ is a clique if and only if $Y=Y^\perp$.
\end{enumerate}
In particular from \ref{it:orth1} and \ref{it:orth2} we have
$Y\subseteq Y^{\perp\perp}$ and $Y^\perp=Y^{\perp\perp\perp}$,
where we write $Y^{\perp\perp}$ for $(Y^\perp)^\perp$.
\end{lem}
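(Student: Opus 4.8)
This is a routine verification lemma: each item follows directly from the definition $\cO^Z(Y)=\{u\in Z\mid d(u,y)\le 1\ \forall y\in Y\}$ together with the elementary observation that $d(u,y)\le 1$ means $u=y$ or $u$ is joined to $y$. I will treat the items in a convenient order, grouping those with a common argument, rather than strictly in the stated order.

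First I would dispose of the ``monotonicity'' and ``Galois connection'' items \ref{it:orth1}--\ref{it:orth5}, which hold for any polarity/Galois correspondence. For \ref{it:orth3}: if $Y_1\subseteq Y_2$ and $u\in\cO^Z(Y_2)$, then $d(u,y)\le 1$ for every $y\in Y_2$, hence for every $y\in Y_1$, so $u\in\cO^Z(Y_1)$. For \ref{it:orth1}: take $y\in Y\subseteq Z$; for every $u\in\cO^Z(Y)$ we have $d(y,u)=d(u,y)\le 1$ (the distance function is symmetric), so $y\in\cO^Z(\cO^Z(Y))$. Item \ref{it:orth2} is the standard consequence of \ref{it:orth1} and \ref{it:orth3}: applying \ref{it:orth1} to $Y$ gives $Y\subseteq \cO^Z(\cO^Z(Y))$, whence by \ref{it:orth3} $\cO^Z(\cO^Z(\cO^Z(Y)))\subseteq\cO^Z(Y)$; applying \ref{it:orth1} to the set $\cO^Z(Y)$ (which is a subset of $Z$) gives the reverse inclusion. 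Item \ref{it:orth5} is immediate: $u\in\cO^Z(Y_1\cup Y_2)$ iff $d(u,y)\le 1$ for all $y\in Y_1$ and all $y\in Y_2$, iff $u\in\cO^Z(Y_1)\cap\cO^Z(Y_2)$; and \ref{it:orth4} follows from \ref{it:orth3} applied to $Y_1\cap Y_2\subseteq Y_i$, $i=1,2$, taking the union of the two resulting inclusions.

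Next the two items that genuinely use the graph structure, \ref{it:orth6} and \ref{it:orth7}. For \ref{it:orth6}: $Y\subseteq Y^\perp$ means every $y\in Y$ satisfies $d(y,y')\le 1$ for every $y'\in Y$, i.e. any two distinct vertices of $Y$ are joined by an edge, i.e. the full subgraph on $Y$ is complete — that is exactly the definition of a simplex (the case $y=y'$ being vacuous). For \ref{it:orth7}: if $Y=Y^\perp$ then in particular $Y\subseteq Y^\perp$, so $Y$ is a simplex by \ref{it:orth6}; and it is maximal, since any $u\in X$ with $Y\cup\{u\}$ a simplex satisfies $d(u,y)\le 1$ for all $y\in Y$, hence $u\in Y^\perp=Y$. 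Conversely, if $Y$ is a clique then $Y\subseteq Y^\perp$ by \ref{it:orth6}, and if some $u\in Y^\perp\setminus Y$ existed then $Y\cup\{u\}$ would be a strictly larger simplex, contradicting maximality; so $Y^\perp\subseteq Y$. Finally the displayed ``in particular'' is just the $Z=X$ specialisation of \ref{it:orth1} and \ref{it:orth2}.

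I do not anticipate a genuine obstacle here; the only point requiring a moment's care is the role of the ``$d(u,y)\le 1$'' convention when $u=y$ (so that a single vertex, or any subset, trivially lies in a simplex), and the symmetry $d(x,y)=d(y,x)$ used in \ref{it:orth1}. If anything deserves emphasis in the write-up, it is that \ref{it:orth1}--\ref{it:orth3} exhibit $Y\mapsto Y^\perp$ (and more generally $\cO^Z$) as an antitone Galois connection of a set with itself, so that all the formal consequences follow mechanically; only \ref{it:orth6}--\ref{it:orth7} encode actual combinatorics of $\G$.
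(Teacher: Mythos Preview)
Your proof is correct and follows essentially the same approach as the paper's: items \ref{it:orth1}--\ref{it:orth4} and \ref{it:orth6}--\ref{it:orth7} are handled identically (direct unwinding of the definition, with \ref{it:orth2} deduced from \ref{it:orth1} and \ref{it:orth3}). The one noteworthy difference is item \ref{it:orth5}: you give the immediate elementwise argument (``$d(u,y)\le 1$ for all $y\in Y_1\cup Y_2$'' is tautologically the conjunction of the two conditions), whereas the paper first establishes the case $Z=X$ via the Galois-connection machinery of \ref{it:orth1}, \ref{it:orth3}, \ref{it:orth4} and then reduces general $Z$ to this case using $\cO^Z(Y)=\cO^X(Y)\cap Z$. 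Your direct route is shorter and works uniformly for all $Z$; the paper's detour is correct but unnecessary here.
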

\begin{proof}
If $y\in Y\subseteq Z$ then, for all $u\in \cO^Z(Y)$, we have $d(u,y)\le 1$.
Hence
$y\in \cO^Z(\cO^Z(Y))$ and \ref{it:orth1} follows.
Statement \ref{it:orth3} follows directly from the definition of
orthogonal complement. Statement \ref{it:orth2} follows from
\ref{it:orth1} and \ref{it:orth3}. Statement \ref{it:orth4} follows
from \ref{it:orth3}. To see \ref{it:orth5} suppose first that $Z=X$.
It follows
from \ref{it:orth3} that
$\cO^X(Y_1\cup Y_2)\subseteq \cO^X(Y_1)
        \cap \cO^X(Y_2)$.
From \ref{it:orth4} and \ref{it:orth1} we have
$\cO^X(\cO^X(Y_1)\cap \cO^X(Y_2))\supseteq Y_1\cup Y_2$.
Hence, from \ref{it:orth1} and \ref{it:orth3},
$\cO^X(Y_1)\cap \cO^X(Y_2)\subseteq
\cO^X(\cO^X(\cO^X(Y_1)\cap \cO^X(Y_2)))
\subseteq \cO^X(Y_1\cup Y_2)$, so \ref{it:orth5} holds in this case. In general,
$\cO^Z(Y_1\cup Y_2)=\cO^X(Y_1\cup Y_2)\cap Z = \cO^X(Y_1)\cap \cO^X(Y_2)
\cap Z= \cO^Z(Y_1)\cap \cO^Z(Y_2)$, as required.
If $Y$ is a simplex and $y\in Y$ then $d(y,z)=1$, for all $z\in Y$,
$z\neq y$. Hence $Y\subset Y^\perp$. Conversely, if $Y\subseteq
Y^\perp$ and $y,z\in Y$ then $d(y,z)\le 1$, so $Y$ is a simplex. Therefore
\ref{it:orth6} holds. If $Y$ is a clique and $x\in Y^\perp\backslash Y$ then
$Y\cup \{x\}$ is a simplex, contrary to maximality of $Y$. Hence, using \ref{it:orth6},
$Y=Y^\perp$. Conversely, if $Y=Y^\perp$, then $Y$ is a simplex and,
by a similar argument,
there is no simplex strictly containing $Y$. Hence \ref{it:orth7} holds.
\end{proof}
\begin{expl} \label{ex:perp}
    \begin{enumerate}
    \item In general the inclusions of Lemma \ref{lem:orth} are strict.
For instance, take
$\G$ to be the graph of Figure \ref{fig:path3}, let $Y_1=\{a,c\}$ and
$Y_2=\{b,c,d\}$. Then $Y_1^\perp = \{b\}$, $Y^\perp_2=\{c\}$ and
$(Y_1\cap Y_2)^\perp=\{b,c,d\}$: so $(Y_1\cap Y_2)^\perp\neq Y_1^\perp
\cup Y^\perp_2$. Moreover $Y_1^{\perp\perp}=\{a,b,c\}\neq Y_1$.
    \item   The subgroup $G(X^{\perp})$ is the centre of the
    group $G=G(\G)$.
    \item   If $X=X_1\sqcup X_2$ is a disjoint union of $X_1$ and
    $X_2$ and $\G$ is the direct sum of graphs $\G(X_1)$ and $\G(X_2)$ then
    $G= G(X_1)\times G(X_2)$. If $\cO^{X_1}(X_1)=\cO^{X_2}(X_2)=\nul$ then the
    groups $G(X_i)$, $i=1,2$ have trivial centre.
    In this case $\cO^X(X_1)=X_2$ and $\cO^{X}(X_2)=X_1$.
    \end{enumerate}
\end{expl}
\begin{figure}
\psfrag{a}{$a$}
\psfrag{b}{$b$}
\psfrag{c}{$c$}
\psfrag{d}{$d$}
\begin{center}
\includegraphics[scale=0.4]{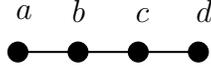}
\end{center}
\caption{A path graph}\label{fig:path3}
\end{figure}
The connection between orthogonal complements and centralisers is
made explicit in the following lemma.
\begin{lem}
Let $G=G(\G)$ and $Y\subseteq X$. Then $C_G(Y)=G(Y^\perp)$.
\end{lem}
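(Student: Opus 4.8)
The plan is to prove the two inclusions $C_G(Y) \supseteq G(Y^\perp)$ and $C_G(Y) \subseteq G(Y^\perp)$ separately, using the normal form theory for partially commutative groups referred to in the preliminaries.

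For the inclusion $G(Y^\perp) \subseteq C_G(Y)$: this is the easy direction. If $u \in Y^\perp$ then by definition $d(u,y) \le 1$ for every $y \in Y$, so either $u = y$ or $u$ is joined to $y$ by an edge of $\G$; in either case $[u,y] = 1$ in $G$ by the defining relations. Hence every generator of the subgroup $G(Y^\perp)$ commutes with every element of $Y$, and since $C_G(Y)$ is a subgroup, it follows that $G(Y^\perp) \le C_G(Y)$.

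For the reverse inclusion $C_G(Y) \subseteq G(Y^\perp)$: take $g \in C_G(Y)$ and write $g$ in a geodesic (minimal length) normal form as a word $w$ in $X \cup X^{-1}$. The key claim is that every letter $x^{\pm 1}$ occurring in $w$ must satisfy $x \in Y^\perp$. Suppose some letter $x$ appearing in $w$ has $x \notin Y^\perp$, i.e. there is $y \in Y$ with $x \neq y$ and $x$ not joined to $y$. Then $[x,y] \neq 1$, so $\alpha(y)$ (the support of $y$) is not contained in the commuting-complement of $\alpha(g)$. Now consider $y^{-1} g y$: using the normal form / cancellation theory for partially commutative groups (the fact that one can detect from the support and the structure of geodesic words whether a generator can be "shuffled through" a word), conjugating $g$ by $y$ where $y$ does not commute with all of $g$ must produce a word that is not freely equal to $g$ — more precisely, $g y \ne y g$. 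This contradicts $g \in C_G(Y)$. Hence every letter of $w$ lies in $Y^\perp$, so $g \in G(Y^\perp)$.

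The main obstacle is making the last step rigorous: one needs the precise statement that if $y \in X$ and $g \in G$ with $g$ having a geodesic word $w$ containing a letter $x^{\pm1}$ with $[x,y]\ne 1$, then $[g,y]\ne 1$. This is exactly where the results from \cite{EKR} (normal forms, the projection to $\langle \alpha(g)\cup\{y\}\rangle$, or the characterisation of centralisers of single generators) are invoked; intuitively one projects onto the partially commutative subgroup on $\alpha(g) \cup \{y\}$ and reduces to the case of a free product with amalgamation or a direct factor, where the claim is standard. I would either cite the relevant lemma from \cite{EKR} or \cite{DKR1} directly, or give the one-line argument: $C_G(y) = G(y^\perp)$ for a single vertex $y$ (a known special case), and then $C_G(Y) = \bigcap_{y\in Y} C_G(y) = \bigcap_{y\in Y} G(y^\perp) = G\bigl(\bigcap_{y\in Y} y^\perp\bigr) = G(Y^\perp)$, where the intersection of parabolic subgroups being the parabolic subgroup on the intersection of vertex sets is again a cited fact about partially commutative groups.
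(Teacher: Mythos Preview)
Your proposal is correct, and the ``one-line argument'' you sketch at the end is exactly the paper's proof: establish $C_G(y)=G(y^\perp)$ for a single vertex by citing \cite[Lemma 2.4]{EKR}, then write $C_G(Y)=\bigcap_{y\in Y} C_G(y)=\bigcap_{y\in Y} G(y^\perp)=G\bigl(\bigcap_{y\in Y} y^\perp\bigr)=G(Y^\perp)$. Your initial direct normal-form argument is unnecessary detour---you yourself identify the gap and then fall back to the cleaner intersection route, which is what the paper does from the start.
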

\begin{proof}
If $x\in X$ then
$C_G(x)\supseteq G(x^\perp)$. From
\cite[Lemma 2.4]{EKR} we also have $C_G(x)\subseteq G(x^\perp)$.
Hence $C_G(Y)=\cap_{y\in Y} C_G(y)=\cap_{y\in Y}G(y^\perp)
=G(\cap_{y\in Y} y^\perp)=G(Y^\perp)$.
\end{proof}

For subsets $Y$ and $Z$ of $X$ we define the {\em closure} of $Y$ in $Z$ to be
$\cl^Z(Y)=\cO^Z\cO^Z(Y)$. When $Z=X$ 
we write $\cl(Y)$ for $\cl^X(Y)$.
The closure operator in $\G$ satisfies the following properties.
\begin{lem}\label{lem:cl} Let $Y$, $Y_1$, $Y_2$ and $Z$ be subsets of $X$.
\begin{enumerate}
\item\label{it:cl1} $Y\subseteq \cl(Y)$.
\item\label{it:cl2} $\cl(Y^\perp)=Y^\perp$.
\item\label{it:cl3} $\cl(\cl(Y))=\cl(Y)$.
\item\label{it:cl4} If $Y_1\subseteq Y_2$ then $\cl(Y_1)\subseteq \cl(Y_2)$.
\item\label{it:cl5} $\cl(Y_1\cap Y_2)\subseteq \cl(Y_1)\cap \cl(Y_2)$
and $\cl(Y_1)\cup \cl(Y_2) \subseteq\cl(Y_1\cup Y_2)$.
\item\label{it:cl6} If $Z=\cl(Y)$ then $Z=U^\perp$, for some $U\subset X$, and then
$\cl(U)=Z^\perp=Y^\perp$.
\item\label{it:cl7} If $\cl(Y_1)=\cl(Y_2)$ then $Y_1^\perp=Y_2^\perp$.
\item\label{it:cl8} $Y$ is a simplex if and only if $\cl(Y)$ is a
simplex if and only if $\cl(Y)\subseteq Y^\perp$.
\item\label{it:cl9} If $Y_1\subseteq Y_2$ then $\cl(\cl(Y_1)\cap Y_2)=\cl(Y_1)$.
\item\label{it:cl10} $\cl(\cl(Y_1)\cup \cl(Y_2))=\cl(Y_1\cup Y_2)$.
\item\label{it:cl11} $\cl(\cl(Y_1)\cap Y_2)\cap Y_2=\cl(Y_1)\cap Y_2$.
\end{enumerate}
\end{lem}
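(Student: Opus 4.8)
The plan is to establish the eleven items essentially in the order listed, since each later item tends to lean on earlier ones and on Lemma \ref{lem:orth}. Items \ref{it:cl1}--\ref{it:cl3} are immediate: \ref{it:cl1} is just \ref{it:orth1} of Lemma \ref{lem:orth} applied with $Z=X$; \ref{it:cl2} follows because $\cl(Y^\perp)=Y^{\perp\perp\perp}=Y^\perp$ by the last sentence of Lemma \ref{lem:orth}; and \ref{it:cl3} is the same identity read as $\cl(\cl(Y))=Y^{\perp\perp\perp\perp}=Y^{\perp\perp}=\cl(Y)$. Item \ref{it:cl4} follows by applying \ref{it:orth3} of Lemma \ref{lem:orth} twice. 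Item \ref{it:cl5} follows from \ref{it:cl4} applied to the inclusions $Y_1\cap Y_2\subseteq Y_i\subseteq Y_1\cup Y_2$.

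Next I would do the bookkeeping items \ref{it:cl6} and \ref{it:cl7}. For \ref{it:cl6}, if $Z=\cl(Y)=Y^{\perp\perp}$ then taking $U=Y^\perp$ gives $Z=U^\perp$; and then $\cl(U)=U^{\perp\perp}=Y^{\perp\perp\perp}=Y^\perp$, while $Z^\perp=Y^{\perp\perp\perp}=Y^\perp$ as well, so all three agree. Item \ref{it:cl7} is then immediate: if $\cl(Y_1)=\cl(Y_2)$, apply $\cO^X$ to both sides and use $Y_i^{\perp\perp\perp}=Y_i^\perp$. For \ref{it:cl8}, the forward direction uses \ref{it:orth6}: if $Y$ is a simplex then $Y\subseteq Y^\perp$, hence $\cl(Y)=Y^{\perp\perp}\subseteq Y^\perp$ (applying \ref{it:orth3} to $Y\subseteq Y^\perp$), which in particular gives $\cl(Y)\subseteq \cl(Y)^\perp$ since $\cl(Y)^\perp = Y^{\perp\perp\perp}=Y^\perp \supseteq \cl(Y)$, so $\cl(Y)$ is a simplex by \ref{it:orth6}; and a simplex $\cl(Y)$ forces its subset $Y$ to be a simplex. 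I would chase the three equivalent conditions around carefully here.

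The substantive items are \ref{it:cl9}, \ref{it:cl10}, \ref{it:cl11}. For \ref{it:cl9}, with $Y_1\subseteq Y_2$: the inclusion $Y_1\subseteq \cl(Y_1)\cap Y_2$ (using $Y_1\subseteq Y_2$ and \ref{it:cl1}) gives $\cl(Y_1)\subseteq \cl(\cl(Y_1)\cap Y_2)$ by \ref{it:cl4}; and $\cl(Y_1)\cap Y_2\subseteq \cl(Y_1)$ gives the reverse, using \ref{it:cl4} and \ref{it:cl3}. Item \ref{it:cl10} is the standard closure-operator identity: $\cl(Y_1\cup Y_2)\subseteq \cl(\cl(Y_1)\cup\cl(Y_2))$ from \ref{it:cl1} and \ref{it:cl4}, and the reverse from $\cl(Y_i)\subseteq \cl(Y_1\cup Y_2)$ (by \ref{it:cl4}) together with \ref{it:cl4} and \ref{it:cl3}. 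Item \ref{it:cl11}: one inclusion is $\cl(Y_1)\cap Y_2\subseteq \cl(\cl(Y_1)\cap Y_2)\cap Y_2$ by \ref{it:cl1}; for the other, $\cl(Y_1)\cap Y_2\subseteq \cl(Y_1)$ gives $\cl(\cl(Y_1)\cap Y_2)\subseteq\cl(\cl(Y_1))=\cl(Y_1)$ by \ref{it:cl4} and \ref{it:cl3}, and intersecting with $Y_2$ finishes it. I expect the only place needing genuine care is \ref{it:cl9} (and keeping the hypothesis $Y_1\subseteq Y_2$ in view throughout), since it is the one asserting an equality rather than a containment and is the identity that later sections presumably invoke; everything else is formal manipulation of the three rules $Y\subseteq\cl(Y)$, monotonicity, and idempotence, all of which are already in hand.
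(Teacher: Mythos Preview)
Your proposal is correct and follows essentially the same route as the paper's proof: items \ref{it:cl1}--\ref{it:cl5} are reduced to Lemma~\ref{lem:orth}, items \ref{it:cl6}--\ref{it:cl8} are handled by direct manipulation of $Y^{\perp\perp\perp}=Y^\perp$, and items \ref{it:cl9}--\ref{it:cl11} are derived from the three closure-operator rules exactly as you describe. The only cosmetic difference is that for \ref{it:cl10} the paper cites the second half of \ref{it:cl5} rather than re-deriving $\cl(Y_i)\subseteq\cl(Y_1\cup Y_2)$ from \ref{it:cl4}, and your treatment of \ref{it:cl8} is in fact slightly cleaner than the paper's.
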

\begin{proof} Statements
\ref{it:cl1} and \ref{it:cl2} are restatements of
Lemma \ref{lem:orth}, \ref{it:orth1}
and
\ref{it:orth2}, respectively.
To see \ref{it:cl3} apply the operator $\cO^X$ to both sides
of \ref{it:cl2}.
Statement \ref{it:cl4} is a consequence of
Lemma \ref{lem:orth}.\ref{it:orth3}.
Statement \ref{it:cl5} follows from
\ref{it:cl4}.
If $Z=\cl(Y)$ then $Z=U^\perp$, where $U=Y^\perp$. If $Z=U^\perp$ then
$\cl(U)=U^{\perp\perp}=Z^\perp=(\cl(Y))^\perp=Y^\perp$, using Lemma
\ref{lem:orth}.\ref{it:orth2}. Hence \ref{it:cl6} holds. To see
\ref{it:cl7}  apply the operator $\cO^X$ to
both $\cl(Y_1)$ and
$\cl(Y_2)$ and use Lemma \ref{lem:orth}.\ref{it:orth2}.
For \ref{it:cl8}, if $\cl(Y)$ is a simplex then $\cl(Y)\subseteq \cl(Y)^\perp$,
from Lemma \ref{lem:orth}.\ref{it:orth6}, so from \ref{it:cl1} and 
Lemma \ref{lem:orth}.\ref{it:orth2}
$\cl(Y)\subseteq Y^{\perp}$. 
If $\cl(Y)\subseteq Y^\perp$ then, 
from \ref{it:cl1} and Lemma \ref{lem:orth}.\ref{it:orth6},  
$Y\subseteq Y^\perp$ so $Y$ is a simplex,
 and 
$\cl(Y)\subseteq \cl(Y^\perp)=\cl(Y)^\perp$, so $\cl(Y)$ is a simplex.  
$Y^{\perp\perp}\subseteq (Y^{\perp\perp})^\perp=Y^\perp$ so
$Y\subseteq Y^{\perp\perp}\subseteq Y^\perp$; and $Y$ is a simplex.
In the setting of \ref{it:cl9} we have,
from \ref{it:cl1}, $Y_1\subseteq \cl(Y_1)\cap Y_2$, so
$\cl(Y_1)\subseteq \cl(\cl(Y_1)\cap Y_2)$.
On the other hand
$\cl(Y_1)\cap Y_2\subseteq \cl(Y_1)$ so,
from \ref{it:cl3} and \ref{it:cl4}, $\cl(\cl(Y_1)\cap Y_2)\subseteq \cl(Y_1)$.
To see  \ref{it:cl10} use the second part of \ref{it:cl5} and then \ref{it:cl3} to
obtain $\cl(\cl(Y_1)\cup \cl(Y_2))\subseteq \cl(Y_1\cup Y_2)$. For the opposite
inclusion use \ref{it:cl1} to obtain $Y_1\cup Y_2\subseteq \cl(Y_1)\cup \cl(Y_2)$
and then \ref{it:cl4} implies that
$\cl(Y_1\cup Y_2)\subseteq \cl(\cl(Y_1)\cup \cl(Y_2))$, as required.
For \ref{it:cl11} first note that \ref{it:cl1} implies that
$\cl(Y_1)\cap Y_2\subseteq \cl(\cl(Y_1)\cap Y_2)\cap Y_2$. Also
$\cl(Y_1)\cap Y_2\subseteq \cl(Y_1)$ so \ref{it:cl4} and \ref{it:cl3} imply
that $\cl(\cl(Y_1)\cap Y_2)\subseteq \cl(Y_1)$. On intersection with $Y_2$ this
gives the inclusion required to complete the proof.
\end{proof}

\begin{expl} \label{ex:cl}
    \begin{enumerate}
    \item\label{ex:cl3}   If $x\in X$ and $Y=\cl(x)=x^{\perp \perp}$ then
    $Y$ is a simplex.
\item In terms of the group $G$ the subset $Y$ of $X$ is a simplex
if and only if $G(Y)$ is Abelian. As
 $C_G(Z)=G(Z^\perp)$, for any subset $Z$ of $X$, 
Lemma \ref{lem:orth}.\ref{it:orth6} states that  $G(Y)$ is
Abelian  if and only if $G(Y)\subseteq C_G(Y)$. The content of
Lemma \ref{lem:cl}.\ref{it:cl8} is that  $G(Y)$ is Abelian if and only if
$C^2_G(Y)$ is Abelian if and only if $C^2_G(Y) \subseteq C_G(Y)$.
    \end{enumerate}
\end{expl}
\begin{defn}
A subset $Y$ of $X$ is called {\em closed} {\rm(}with respect to $\G${\rm)}
if $Y=\cl(Y)$.
Denote by $\CS(\G)$ the set of all closed subsets of $X$.
\end{defn}

We list some basic properties of $\CS(\G)$.
\begin{lem}\label{lem:cs}
Let $Y$ be a subset of $X$. The following hold.
\be
\item $\cl(Y)\in \CS(\G)$.
\item $X$ is the unique maximal element of $\CS(\G)$.
\item $Y$ is closed if and only if $Y=\cO^X(U)$, for some $U\in \CS(\G)$.
\item $\cO^X(X)$ is the unique minimal element of  $\CS(\G)$.
\item If $Y_1, Y_2\in \CS(\G)$ then $Y_1\cap Y_2\in \CS(\G)$.
\ee
\end{lem}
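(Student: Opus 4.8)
The plan is to verify the five properties of $\CS(\G)$ in turn, leaning throughout on Lemmas \ref{lem:orth} and \ref{lem:cl}. The bulk of the work is already done; the proof amounts to unwinding definitions and citing the right item.

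First I would prove item (1): by Lemma \ref{lem:cl}.\ref{it:cl3} we have $\cl(\cl(Y))=\cl(Y)$, which is exactly the statement that $\cl(Y)$ is closed, hence $\cl(Y)\in \CS(\G)$. For item (2), note $X\subseteq\cl(X)$ by Lemma \ref{lem:cl}.\ref{it:cl1}, and $\cl(X)\subseteq X$ trivially since $\cl(X)$ is a subset of $X$; thus $X=\cl(X)\in\CS(\G)$, and since every closed set is by definition a subset of $X$, it is the unique maximal element. For item (3), the forward direction: if $Y$ is closed then $Y=\cl(Y)=\cO^X(\cO^X(Y))=\cO^X(U)$ with $U=\cO^X(Y)$, and $U$ is closed by Lemma \ref{lem:cl}.\ref{it:cl2}. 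Conversely, if $Y=\cO^X(U)$ for some $U\in\CS(\G)$, then $Y=U^\perp$ is closed by Lemma \ref{lem:cl}.\ref{it:cl2} (i.e.\ $\cl(U^\perp)=U^\perp$). For item (4), $\cO^X(X)=X^\perp$ is closed by Lemma \ref{lem:cl}.\ref{it:cl2}; and for any closed $Y$ we have $Y\subseteq X$, so by Lemma \ref{lem:orth}.\ref{it:orth3}, $X^\perp=\cO^X(X)\subseteq\cO^X(Y)=\cO^X(\cO^X(\cO^X(Y)))$; but since $Y$ is closed, applying $\cO^X$ twice more and using Lemma \ref{lem:orth}.\ref{it:orth2} gives $\cO^X(X)\subseteq Y$, so $X^\perp$ is contained in every closed set, hence is the unique minimal element. (Alternatively: every closed set is an orthogonal complement $U^\perp$ with $U\subseteq X$, and $U^\perp\supseteq X^\perp$ by Lemma \ref{lem:orth}.\ref{it:orth3}.)

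For item (5), suppose $Y_1,Y_2\in\CS(\G)$. Using item (3), write $Y_i=\cO^X(U_i)$ with $U_i\in\CS(\G)$, so $Y_1\cap Y_2=\cO^X(U_1)\cap\cO^X(U_2)=\cO^X(U_1\cup U_2)$ by Lemma \ref{lem:orth}.\ref{it:orth5}. By item (1), $\cO^X(U_1\cup U_2)=\cl(\cO^X(U_1\cup U_2))$ is closed, so $Y_1\cap Y_2\in\CS(\G)$.

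None of these steps presents a real obstacle; if anything, the only point requiring slight care is the uniqueness/minimality claims in (2) and (4), where one must observe both that $X^\perp$ (resp.\ $X$) is itself closed and that it is comparable to every other closed set — the comparability coming directly from the antitone behaviour of $\cO^X$ in Lemma \ref{lem:orth}.\ref{it:orth3} together with the triple-complement identity \ref{it:orth2}. The proof is otherwise a routine bookkeeping exercise over the previously established lemmas.
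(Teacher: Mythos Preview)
Your proof is correct and follows essentially the same line as the paper's. The only notable difference is in item (5): the paper argues directly that $Y_1\cap Y_2\subseteq \cl(Y_1\cap Y_2)\subseteq \cl(Y_1)\cap\cl(Y_2)=Y_1\cap Y_2$ via Lemma~\ref{lem:cl}.\ref{it:cl1} and~\ref{it:cl5}, whereas you write $Y_i=\cO^X(U_i)$ and use Lemma~\ref{lem:orth}.\ref{it:orth5} to exhibit $Y_1\cap Y_2$ as an orthogonal complement --- both are one-line arguments of equal weight. (A small citation quibble: in that step you invoke ``item (1)'' for the closedness of $\cO^X(U_1\cup U_2)$, but what you actually need is Lemma~\ref{lem:cl}.\ref{it:cl2}, i.e.\ that any $W^\perp$ satisfies $\cl(W^\perp)=W^\perp$.)
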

\begin{proof}
\be
\item This follows from Lemma \ref{lem:cl}.\ref{it:cl3}.
\item This is clear, given the previous statement and the fact that
$X\subseteq \cl(X)$.
\item It follows, from Lemma \ref{lem:cl}, \ref{it:cl2} and \ref{it:cl6},
that $Y\in \CS(\G)$ if and only if $Y=\cO^X(U)$, for some subset $U$ of
$X$. If $Y$ is closed and $Y=\cO^X(U)$ then $Y=\cl(Y)=\cO^X(\cl(U))$ and,
as $\cl(U)$ is closed, the result follows.
\item From the previous statement it follows that $\cO^X(X)\in \CS(\G)$.
If $Y\in \CS(\G)$ then $Y=\cO^X(U)$, for some $U\subseteq X$. From
Lemma \ref{lem:orth} then $\cO^X(X)\subseteq \cO^X(U)=Y$, as required.
\item From Lemma \ref{lem:cl}, \ref{it:cl1}
and \ref{it:cl5}, we have
\[Y_1\cap Y_2\subseteq \cl(Y_1\cap Y_2)\subseteq \cl(Y_1)\cap \cl(Y_2)
=Y_1\cap Y_2,\]
the last equality holding by definition of closed set. Therefore
$Y_1\cap Y_2=\cl(Y_1\cap Y_2)$.
\ee
\end{proof}
The
relation $Y_1\subseteq Y_2$ defines a partial order on the set
$\CS(\G)$. As the closure operator $\cl$ is inclusion preserving and maps
arbitrary subsets of $X$ into closed sets we can make
$\CS(\G)$ into a lattice by defining the
 the infimum  $Y_1\wedge Y_2$ of $Y_1$ and $Y_2$
by  $Y_1\wedge Y_2=\cl(Y_1\cap Y_2)=Y_1\cap Y_2$
and the supremum
$Y_1 \vee Y_2=\cl(Y_1\cup Y_2)$.

\begin{prop} \label{prop:cslattice}
The set $\CS(\G)$ with operations $\wedge$ and $\vee$ above is a
complete    lattice.
\end{prop}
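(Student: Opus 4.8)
The plan is to verify that $\CS(\G)$, with the stated order and operations, satisfies the definition of a complete lattice: it is a poset in which every subset (not just every pair) has an infimum and a supremum. First I would note that inclusion is manifestly a partial order on $\CS(\G)$, so the only real content is the existence of arbitrary infima and suprema, together with a check that the binary $\wedge$ and $\vee$ defined just before the statement agree with these.

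First I would handle infima. Let $\{Y_i\mid i\in I\}$ be any family of closed sets. I claim $\bigcap_{i\in I}Y_i$ is again closed. Each $Y_i=\cO^X(U_i)$ for some $U_i\subseteq X$ by Lemma \ref{lem:cs}(3) (or directly by definition of $\CS(\G)$), so by Lemma \ref{lem:orth}.\ref{it:orth5} (applied to an arbitrary, possibly infinite union — which holds since $X$ is finite, or by the same two-line argument iterated) we get $\bigcap_i Y_i=\bigcap_i \cO^X(U_i)=\cO^X\!\left(\bigcup_i U_i\right)\in\CS(\G)$. Since $\bigcap_i Y_i$ is a closed set contained in each $Y_i$, and any closed set contained in all $Y_i$ is contained in their intersection, it is the infimum $\bigwedge_{i\in I}Y_i$. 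For a finite family this reduces to Lemma \ref{lem:cs}(5), and for two sets to $Y_1\wedge Y_2=Y_1\cap Y_2=\cl(Y_1\cap Y_2)$, consistent with the pre-statement definition.

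Next I would handle suprema in the standard closure-operator way. Given $\{Y_i\mid i\in I\}\subseteq\CS(\G)$, set $Z=\cl\!\left(\bigcup_{i\in I}Y_i\right)$, which lies in $\CS(\G)$ by Lemma \ref{lem:cs}(1). By Lemma \ref{lem:cl}.\ref{it:cl1} each $Y_i\subseteq Z$, so $Z$ is an upper bound. If $W\in\CS(\G)$ is any upper bound, then $\bigcup_i Y_i\subseteq W$, so by Lemma \ref{lem:cl}.\ref{it:cl4} and \ref{it:cl3} (since $W=\cl(W)$) we get $Z=\cl(\bigcup_i Y_i)\subseteq\cl(W)=W$; hence $Z=\bigvee_{i\in I}Y_i$. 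For two sets this gives $Y_1\vee Y_2=\cl(Y_1\cup Y_2)$, again matching the definition given before the proposition. Finally, applying this to the empty family: the supremum of $\nul$ is $\cl(\nul)=\cO^X\cO^X(\nul)=\cO^X(X)$, the least element (Lemma \ref{lem:cs}(4)), and the infimum of $\nul$ is $X$, the greatest element (Lemma \ref{lem:cs}(2)); so $\CS(\G)$ is in particular bounded.

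I do not expect any serious obstacle here: the proposition is essentially a formal consequence of the fact that $\cl$ is a closure operator whose closed sets are closed under arbitrary intersections, a situation in which the lattice of closed sets is automatically complete. The one point deserving a word of care is the passage from the finite identity in Lemma \ref{lem:orth}.\ref{it:orth5} to arbitrary intersections in the infimum argument; since $\G$ is finite this is immediate, but I would spell it out (or simply observe that any intersection of closed sets equals a finite subintersection, as $X$ is finite) to keep the argument clean.
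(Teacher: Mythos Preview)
Your argument is correct and more thorough than what the paper actually does. The paper's proof is two lines: it notes that $\CS(\G)$ has already been seen to be a lattice (from the discussion just before the proposition), observes via Lemma~\ref{lem:cs} that it has maximum element $X$ and minimum element $\cO^X(X)$, and concludes ``so is complete.'' Implicitly this relies on the finiteness of $\G$: a finite lattice is automatically complete, since any subset is finite and its meet and join can be built from the binary operations, with the bounds handling the empty subset.

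Your approach instead constructs arbitrary infima and suprema directly, showing that intersections of closed sets are closed and that closures of unions give least upper bounds. This is the standard closure-operator argument and has the virtue of not depending on finiteness at all (your caveat about Lemma~\ref{lem:orth}.\ref{it:orth5} is unnecessary, since the identity $\bigcap_i \cO^X(U_i)=\cO^X(\bigcup_i U_i)$ holds for arbitrary families by the same elementwise check). The paper's route is quicker given the ambient finiteness assumption; yours is self-contained and would survive generalisation to infinite graphs.
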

\begin{proof}
As we have seen $\CS(\G)$ is a lattice. From Lemma \ref{lem:cs} it
has maximum element $X$ and minimum element $\cO^X(X)$, so is complete.
\end{proof}
\begin{prop}\label{prop:orthmap}
The  operator $\cO^X$ maps  $\CS(\G)$ to itself and is a lattice
anti-automorphism.
\end{prop}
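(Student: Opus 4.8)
The plan is to verify the three required properties in turn: that $\cO^X$ maps $\CS(\G)$ into $\CS(\G)$, that it is a bijection, and that it is order-reversing in a way that interchanges $\wedge$ and $\vee$ (so that it is simultaneously an anti-homomorphism of lattices and its own two-sided inverse on $\CS(\G)$). First I would observe that if $Y\in\CS(\G)$ then $Y=\cl(Y)=Y^{\perp\perp}$, and $Y^\perp = (Y^{\perp\perp})^\perp$ is of the form $U^\perp$ with $U=Y^{\perp}$, hence $Y^\perp\in\CS(\G)$ by Lemma \ref{lem:cs}; so $\cO^X$ does map $\CS(\G)$ to itself. For bijectivity, the key identity is Lemma \ref{lem:orth}.\ref{it:orth2}, which gives $Y^{\perp\perp\perp}=Y^\perp$; restricted to closed sets this says $\cO^X\circ\cO^X$ is the identity on $\CS(\G)$, since for $Y$ closed $\cO^X(\cO^X(Y))=\cl(Y)=Y$. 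An involution is in particular a bijection, so $\cO^X\colon\CS(\G)\to\CS(\G)$ is a bijection with inverse itself.

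Next I would check the anti-automorphism condition on operations. For the meet: given $Y_1,Y_2\in\CS(\G)$, we have $Y_1\wedge Y_2=Y_1\cap Y_2$ (as recorded before Proposition \ref{prop:cslattice}), and I want $\cO^X(Y_1\cap Y_2)=\cO^X(Y_1)\vee\cO^X(Y_2)=\cl(\cO^X(Y_1)\cup\cO^X(Y_2))$. Writing $Y_i=U_i^\perp$ with $U_i$ closed, Lemma \ref{lem:orth}.\ref{it:orth5} gives $U_1^\perp\cap U_2^\perp=(U_1\cup U_2)^\perp$, so $\cO^X(Y_1\cap Y_2)=(U_1\cup U_2)^{\perp\perp}=\cl(U_1\cup U_2)$; and by Lemma \ref{lem:cl}.\ref{it:cl10}, $\cl(U_1\cup U_2)=\cl(\cl(U_1)\cup\cl(U_2))=\cl(Y_1^\perp\cup Y_2^\perp)$ after applying $\cO^X$ appropriately — more directly, since $U_i=Y_i^\perp$ one gets $\cO^X(Y_1\cap Y_2)=\cl(Y_1^\perp\cup Y_2^\perp)=\cO^X(Y_1)\vee\cO^X(Y_2)$. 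For the join: $Y_1\vee Y_2=\cl(Y_1\cup Y_2)$, and $\cO^X(\cl(Y_1\cup Y_2))=\cO^X(\cO^X(\cO^X(Y_1\cup Y_2)))=\cO^X(Y_1\cup Y_2)$ by Lemma \ref{lem:orth}.\ref{it:orth2}, which by Lemma \ref{lem:orth}.\ref{it:orth5} equals $\cO^X(Y_1)\cap\cO^X(Y_2)=\cO^X(Y_1)\wedge\cO^X(Y_2)$, the last equality because $\cO^X(Y_i)$ are closed and the meet in $\CS(\G)$ is intersection.

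The computations above are all routine manipulations of the identities in Lemmas \ref{lem:orth} and \ref{lem:cl}; I do not expect a genuine obstacle. The one point requiring a little care is bookkeeping: making sure at each step that the sets to which I apply $\cl$ or $\cO^X$ really are closed (so that $\cO^X\cO^X$ collapses to the identity rather than to $\cl$), and distinguishing the meet formula $Y_1\wedge Y_2=Y_1\cap Y_2$ from the join formula $Y_1\vee Y_2=\cl(Y_1\cup Y_2)$ so that the anti-homomorphism identities are matched up correctly. Once those are tracked, order-reversal ($Y_1\subseteq Y_2\Rightarrow\cO^X(Y_2)\subseteq\cO^X(Y_1)$) is immediate from Lemma \ref{lem:orth}.\ref{it:orth3}, and together with the involution property this also reproves bijectivity, completing the proof that $\cO^X$ is a lattice anti-automorphism of $\CS(\G)$.
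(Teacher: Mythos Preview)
Your proof is correct and follows the same approach as the paper: show $\cO^X$ preserves $\CS(\G)$, is order-reversing (Lemma \ref{lem:orth}.\ref{it:orth3}), and is an involution on closed sets. The paper's proof stops there, invoking the standard lattice-theoretic fact that an order-reversing involution on a lattice automatically interchanges $\wedge$ and $\vee$; your explicit verification of $\cO^X(Y_1\wedge Y_2)=\cO^X(Y_1)\vee\cO^X(Y_2)$ and its dual is correct but redundant once you have bijectivity and order-reversal.
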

\begin{proof}
If $Y\in \CS(\G)$ then, from Lemma \ref{lem:cs}, $\cO^X(Y)\in \CS(\G)$;
so $\cO^X$ maps $\CS(\G)$ to itself. From Lemma \ref{lem:orth} $\cO^X$
is inclusion reversing. Moreover, for $Y\in \CS(\G)$ we
have $\cO^X(\cO^X(Y))=Y$; so the restriction of $\cO^X$ to
$\CS(\G)$ is a bijection. Hence this restriction is a lattice
anti-automorphism.
\end{proof}

If $Z\subseteq X$ and $\G_Z$ is the full subgraph of $\G$ with vertex
set $Z$ then, by abuse of notation,  we write $\CS(Z)$ for
$\CS(\G_Z)$. As long as it is clear that $\G$ is fixed this should cause
no confusion. We have $\cO^Z(Y)=\cO^X(Y)\cap Z$ so $\CS(Z)$ consists
of subsets $Y$ of $Z$ such that $Y=\cl^Z(Y)=\cO^X(\cO^X(Y)\cap Z)\cap Z$. 

\subsection{Disconnected Graphs and Joins of Graphs}\label{sec:DG}
Now suppose that $X$ is a disjoint union $X=X_1\sqcup X_2$,
where $X_1$ and $X_2$ are non-empty, and $\G=\G(X_1)\sqcup \G(X_2)$
(that is no edge of $\G$ joins a vertex of $X_1$ to a vertex of $X_2$).
Write $\G_i$ for $\G(X_i)$, $i=1,2$. We wish to describe
$\CS(\G)$ in terms of the lattices $\CS(\G_i)$. First of all we note
the following lemma.
\begin{lem}\label{lem:disjoint} With the hypotheses above,
if $U$ is a non-empty subset of $X_i$ then $\cO^{X_i}(U)=\cO^{X}(U)$.
\end{lem}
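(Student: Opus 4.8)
The plan is to unwind the definitions and observe that the claim is essentially immediate once we recall that $\cO^X(U) = \cO^{X_i}(U) \cup \cO^{X_j}(U)$ where $j \neq i$, together with the fact that no vertex of $X_j$ is at distance $\le 1$ from any vertex of $X_i$ when $\G = \G(X_1) \sqcup \G(X_2)$ and $U$ is non-empty.

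First I would fix $i \in \{1,2\}$ and let $j$ denote the other index, so that $X = X_i \sqcup X_j$ with no edges between the two parts. Let $U$ be a non-empty subset of $X_i$. By definition, $\cO^X(U) = \{v \in X : d(v,u) \le 1 \text{ for all } u \in U\}$. Since $U \subseteq X_i$, I would split $\cO^X(U)$ into its intersection with $X_i$ and its intersection with $X_j$: the first piece is exactly $\cO^{X_i}(U)$ by definition of the orthogonal complement computed inside $X_i$ (using that $\G_i$ is a full subgraph of $\G$, so distances $\le 1$ agree). It therefore suffices to show $\cO^X(U) \cap X_j = \nul$.

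For this, pick any $u \in U$ (possible since $U \ne \nul$); then $u \in X_i$. For any $v \in X_j$, since $\G$ has no edge joining a vertex of $X_i$ to a vertex of $X_j$ and $v \ne u$ (they lie in disjoint sets), we have $d(v,u) \ge 2$ — in fact $d(v,u) = \infty$ as $v$ and $u$ lie in distinct connected components. Hence $v \notin \cO^X(U)$. This shows $\cO^X(U) \cap X_j = \nul$, and combining with the previous paragraph gives $\cO^X(U) = \cO^X(U) \cap X_i = \cO^{X_i}(U)$, as required.

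There is no real obstacle here; the only point requiring a moment's care is the use of non-emptiness of $U$: if $U = \nul$ then by convention $\cO^{X_i}(\nul) = X_i$ while $\cO^X(\nul) = X$, and these differ, which is exactly why the hypothesis $U \ne \nul$ is imposed. I would make sure to flag this explicitly so the reader sees where the hypothesis is used.
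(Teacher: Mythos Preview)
Your proof is correct and follows essentially the same approach as the paper: both arguments use non-emptiness of $U$ to pick some $u\in U\subseteq X_i$ and observe that any vertex $v\in X_j$ has $d(v,u)=\infty$, forcing $\cO^X(U)\subseteq X_i$. Your additional remark explaining why the hypothesis $U\neq\nul$ is genuinely needed is a welcome clarification.
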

\begin{proof}
By definition  $\cO^{X_i}(U)\subseteq \cO^{X}(U)$.
We have $\cO^{X}(U)=\{x\in X|d(u,x)\le 1, \forall u\in U\}$.
If $x\notin X_i$ then, as $U\neq \nul$, there is some $u\in U$ such
that $d(x,u)=\infty$. Hence $x\in \cO^{X}(U)$ implies $x\in X_i$, so
$x\in \cO^{X_i}(U)$.
\end{proof}
The relationship between $\CS(\G)$ and the $\CS(\G_i)$ is
specified by the following proposition.
\begin{prop}\label{prop:disjoint}
Let $\G=\G_1\sqcup \G_2$, as above.
\be
\item $\nul \in \CS(\G)$.
\item A non-empty set $Y$ is in $\CS(\G)\backslash \{X,X_1,X_2\}$ if and only if
$Y$ is in precisely one of $\CS(\G_i)\backslash \{X_i\}$,
$i=1$ or $2$.
\item If $\cO^{X_i}(X_i) =\nul$ then $\nul \in CS(\G_i)$ and
$X_i\notin CS(\G)$.
\item If $\cO^{X_i}(X_i) \neq \nul$ then $\nul \notin CS(\G_i)$ and
$X_i\in CS(\G)$.
\ee
\end{prop}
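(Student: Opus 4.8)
The plan is to prove each of the four items by direct analysis of orthogonal complements, using Lemma~\ref{lem:disjoint} as the main engine. Throughout, recall that for a non-empty $U\subseteq X_i$ we have $\cO^X(U)=\cO^{X_i}(U)\subseteq X_i$, while $\cO^X(\nul)=X$ by convention; these two facts dictate almost everything.

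\textbf{Item 1.} To see $\nul\in\CS(\G)$, it suffices to exhibit a subset $U\subseteq X$ with $\cO^X(U)=\nul$. Take $U=\{x_1,x_2\}$ with $x_i\in X_i$ (possible since both $X_i$ are non-empty). Any $z\in\cO^X(U)$ must satisfy $d(z,x_1)\le 1$ and $d(z,x_2)\le 1$; but $z$ lies in one of $X_1,X_2$, say $X_1$, and then $d(z,x_2)=\infty$, a contradiction. So $\cO^X(U)=\nul$, hence $\nul=\cl(\nul)$ is closed. (Alternatively $\nul=\cO^X(X)$, the minimum of $\CS(\G)$ by Lemma~\ref{lem:cs}.)

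\textbf{Item 2.} First I would show that every non-empty closed set $Y$ of $\G$ other than $X$ is contained in $X_1$ or in $X_2$. Write $Y=\cO^X(U)$ for some $U\subseteq X$; if $U=\nul$ then $Y=X$, excluded, so $U\neq\nul$. If $U$ meets both $X_1$ and $X_2$, pick $u_i\in U\cap X_i$; then as in Item~1 any $z\in Y$ would have to be within distance $1$ of both $u_1$ and $u_2$, impossible, so $Y=\nul$, again excluded. Hence $U\subseteq X_i$ for some $i$, and then $Y=\cO^X(U)=\cO^{X_i}(U)\subseteq X_i$. Thus $Y\subseteq X_i$ for exactly one $i$ unless $Y\subseteq X_1\cap X_2=\nul$. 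Next, for such $Y\subseteq X_i$ with $Y\neq X_i$: since $Y=\cO^{X_i}(U)$ and $U\subseteq X_i$ is non-empty, $Y=\cl^{X_i}(Y)$ by the same computation ($\cO^{X_i}\cO^{X_i}(U)=\cO^X\cO^X(U)=\cO^X(Y)=Y$, using Lemma~\ref{lem:disjoint} once more on the non-empty set $\cO^X(U)\subseteq X_i$ — here one must check $\cO^X(U)$ is non-empty, which follows because $U\subseteq X_i$ is a simplex is not automatic; instead argue directly that $Y$ closed in $\G$ forces $Y$ closed in $\G_i$ by restricting the double-complement and using $\cO^X(W)\cap X_i=\cO^{X_i}(W)$ for any $W\subseteq X_i$). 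So $Y\in\CS(\G_i)\smallsetminus\{X_i\}$. Conversely, if $Y\in\CS(\G_i)\smallsetminus\{X_i\}$ and $Y\neq\nul$, write $Y=\cO^{X_i}(U)$ with $U\subseteq X_i$; since $Y\neq X_i=\cO^{X_i}(\nul)$ we may assume $U\neq\nul$ (if $\cO^{X_i}(\nul)$ is the only representation then $Y=X_i$), so by Lemma~\ref{lem:disjoint} $Y=\cO^{X_i}(U)=\cO^X(U)\in\CS(\G)$, and $Y\notin\{X,X_1,X_2\}$ since $Y\subsetneq X_i$ and $Y\not\subseteq X_{3-i}$. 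Because a non-empty subset of $X_i$ cannot lie in $X_{3-i}$, the membership is in precisely one $\CS(\G_i)\smallsetminus\{X_i\}$.

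\textbf{Items 3 and 4.} These are two halves of one dichotomy. By definition $\cO^{X_i}(X_i)=\cO^X(X_i)\cap X_i$, and for a non-empty $X_i$ we have $\cO^X(X_i)\subseteq X_i$ by the Item~1 argument, so $\cO^{X_i}(X_i)=\cO^X(X_i)$. Now $\nul\in\CS(\G_i)$ iff $\nul=\cO^{X_i}(W)$ for some $W$; the only candidate giving the empty set from a complement within $\G_i$, when $\G_i$ has a vertex adjacent to everything, fails, and in fact one shows $\nul\in\CS(\G_i)\iff\cO^{X_i}(X_i)=\nul$: indeed $\cl^{X_i}(\nul)=\cO^{X_i}(\cO^{X_i}(\nul))=\cO^{X_i}(X_i)$, so $\nul$ is closed in $\G_i$ exactly when $\cO^{X_i}(X_i)=\nul$. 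Similarly $X_i=\cl(X_i)$ in $\G$ iff $\cO^X(\cO^X(X_i))=X_i$; now $\cO^X(X_i)=\cO^{X_i}(X_i)$, and if this is $\nul$ then $\cO^X(\nul)=X\neq X_i$ (as $X_{3-i}\neq\nul$), so $X_i\notin\CS(\G)$, whereas if $\cO^{X_i}(X_i)\neq\nul$ then it is a non-empty subset of $X_i$, so $\cO^X(\cO^{X_i}(X_i))=\cO^{X_i}(\cO^{X_i}(X_i))=\cl^{X_i}(X_i)=X_i$ by Lemma~\ref{lem:orth}.\ref{it:orth7} applied in $\G_i$ (or Lemma~\ref{lem:cl}.\ref{it:cl2}), giving $X_i\in\CS(\G)$. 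Combining, Item~3 ($\cO^{X_i}(X_i)=\nul$): then $\nul\in\CS(\G_i)$ and $X_i\notin\CS(\G)$; Item~4 ($\cO^{X_i}(X_i)\neq\nul$): then $\nul\notin\CS(\G_i)$ and $X_i\in\CS(\G)$.

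The main obstacle will be the bookkeeping in Item~2 around the edge cases where the representing set $U$ is empty or a singleton: one must be careful that ``$Y$ is closed in $\G$'' and ``$Y$ is closed in $\G_i$'' really do coincide for $\nul\neq Y\subseteq X_i$, which hinges on applying Lemma~\ref{lem:disjoint} to both $U$ and $\cO^X(U)$ and knowing the latter is non-empty — so the cleanest route is to avoid that by using instead the identity $\cO^X(W)\cap X_i=\cO^{X_i}(W)$ for all $W\subseteq X_i$ (immediate from the definition), which lets one compute $\cl^{X_i}$ as a restriction of $\cl^X$ with no non-emptiness hypothesis. Everything else is a routine unwinding of definitions.
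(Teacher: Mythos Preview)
Your approach is essentially the same as the paper's: both use Lemma~\ref{lem:disjoint} as the engine to transfer orthogonal complements between $\G$ and $\G_i$, and the case analyses are structured identically. One remark on Item~2: your parenthetical computation ``$\cO^{X_i}\cO^{X_i}(U)=\cO^X\cO^X(U)=\cO^X(Y)=Y$'' is garbled (the last equality is false in general), and the whole detour is unnecessary --- once you have established $Y=\cO^{X_i}(U)$ with $U\subseteq X_i$ non-empty, $Y$ is already an orthogonal complement in $\G_i$ and hence closed there by definition; this is exactly what the paper does in one line. Your self-correction at the end of Item~2 is the right instinct, but even that is more work than needed.
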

\begin{proof}
\be
\item As $X_i$ is non-empty it follows that $\nul=\cO^X(X)$, so
$\nul\in \CS(\G)$.
\item Let $Y$ be a non-empty element of $\CS(\G)\backslash\{X, X_1,X_2\}$.
Then $Y=\cO^X(U)$, for some subset $U$ of $X$. If $U\cap X_i\neq \nul$,
for $i=1$ and $2$, then $\cO^X(U)=\nul$. Hence $U\subseteq X_i$, for $i=1$ or $2$.
If $U=\nul$ then $Y=X$, so $U\neq \nul$ and, from Lemma \ref{lem:disjoint},
$Y=\cO^{X_i}(U)$ so is in $\CS(\G_i)$. Note that in this case
$Y\subseteq X_i$ and is  non-empty; so cannot be in $\CS(\G_j)$, $j\neq i$.
Conversely if $Y$ is a non-empty element of $\CS(\G_i)\backslash\{X_i\}$
then $Y=\cO^{X_i}(U)$, for some $U\subseteq X_i$. As $Y\neq X_i$ we have
$U\neq \nul$ and so, from Lemma \ref{lem:disjoint} again,
$Y\in CS(\G)$.
\item From Lemma \ref{lem:cs}, $\nul \in \CS(\G_i)$. From Lemma
\ref{lem:disjoint} we have $\nul=\cO^{X_i}(X_i)=\cO^X(X_i)$.
If $X_i\in \CS(\G)$ then $X_i=\cO^X(U)$, for some $U\in \CS(\G)$.
Hence $\nul=\cO^X(X_i)=U$ which implies $X_i=\cO^X(U)=X$, a contradiction.
\item As $\cO^{X_i}(X_i)$ is the minimal element of $\CS(\G_i)$, in this
case $\nul\notin \CS(\G_i)$. We have $X_i=\cO^{X_i}(U)$, for
some $U\in \CS(\G_i)$, so $U\neq \nul$ and $U\subseteq X_i$. That
$X_i\in \CS(\G)$ now follows from Lemma \ref{lem:disjoint}.
\ee
\end{proof}

Let $L=\CS(\G)$, $L_i=\CS(\G_i)$ and $L_i^\prime=\CS(\G_i)\backslash\{X_i\}$.
Then Figure \ref{fig:lattice2} illustrates
the composition of $\CS(\G)$ in terms of the
$\CS(\G_i)$.
\begin{figure}[!h]
  \centering
  \psfrag{X}{$X$}
  \psfrag{L1}{$L_1$}
  \psfrag{L'2}{$L_2^\prime$}
  \psfrag{X1}{$X_1$}
  \psfrag{X1t}{$X_1^\perp$}
  \psfrag{L2}{$L_2$}
  \psfrag{L'1}{$L_1^\prime$}
  \psfrag{X2}{$X_2$}
  \psfrag{X2t}{$X_2^\perp$}
  \psfrag{N}{$\emptyset$}
\parbox{0.3\textwidth}
{
\begin{center}
   \includegraphics[scale=0.3]{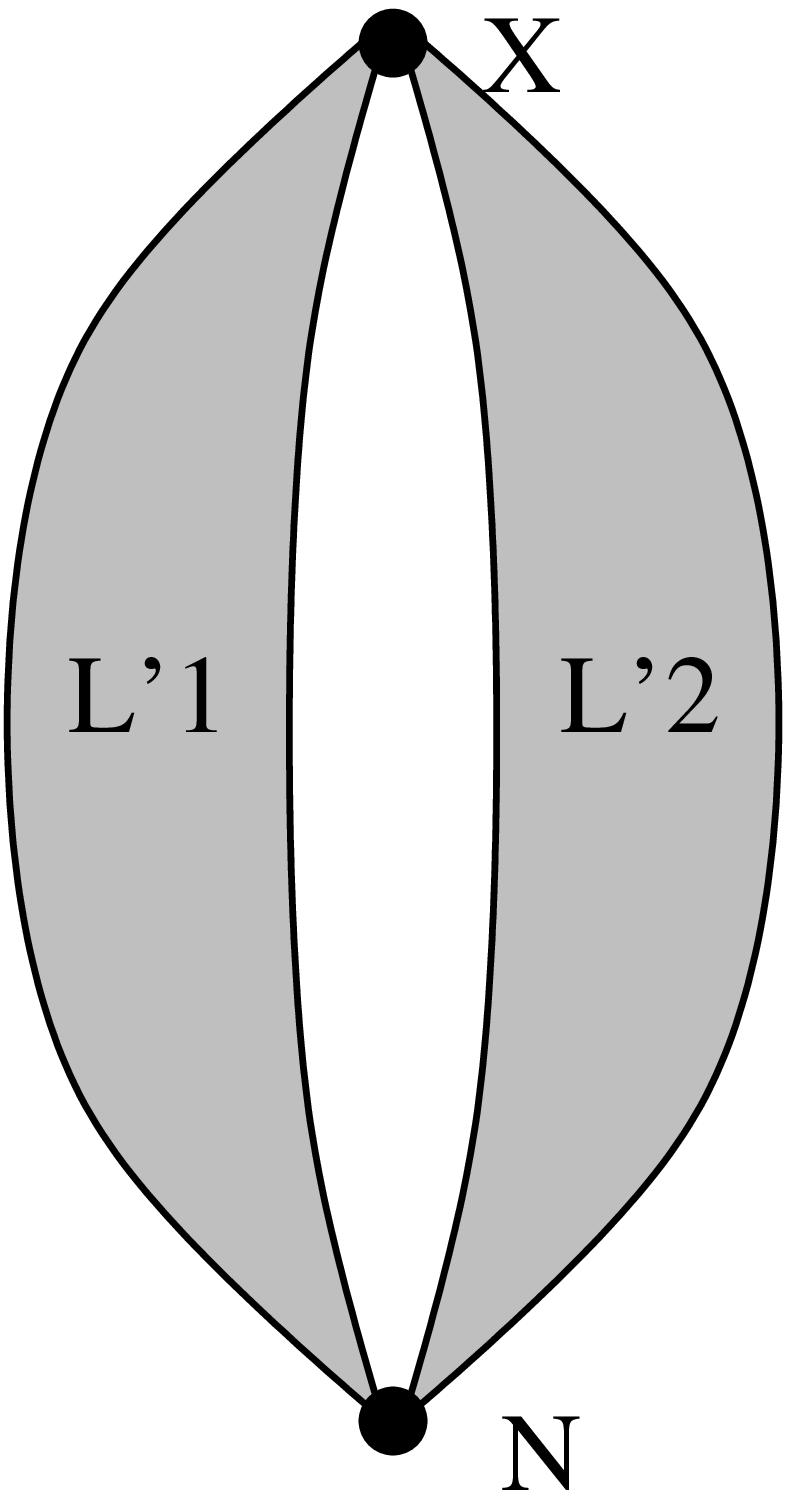}\\[.5em]
$\cO^{X_i}(X_i)=\nul$,\\ $i=1,2$
\end{center}
}
\parbox{0.3\textwidth}
{
\begin{center}
   \includegraphics[scale=0.3]{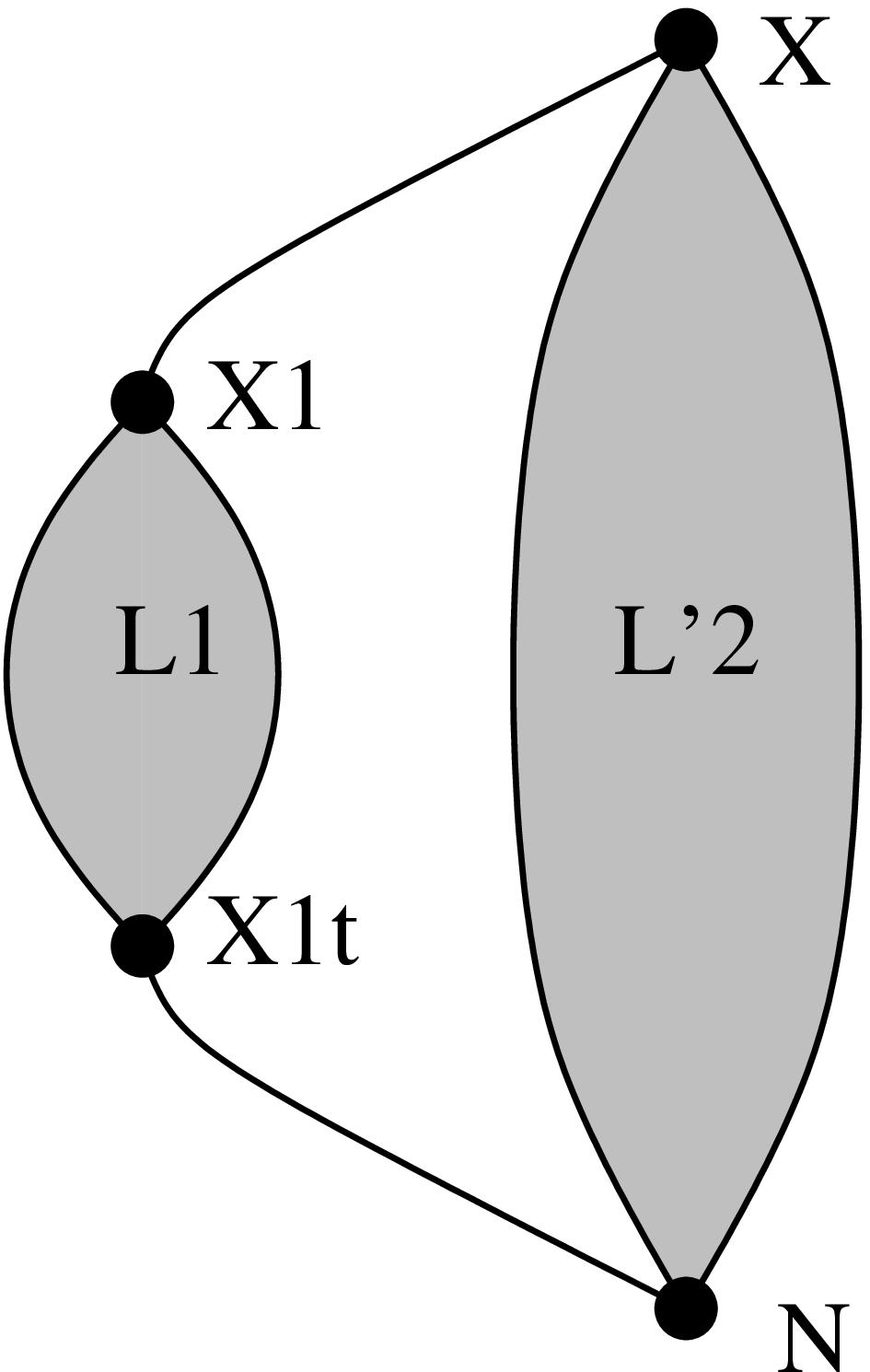}\\[.5em]
$\cO^{X_1}(X_1)\neq\nul$,\\
$\cO^{X_2}(X_2)=\nul$,
\end{center}
}
\parbox{0.3\textwidth}
{
\begin{center}
   \includegraphics[scale=0.3]{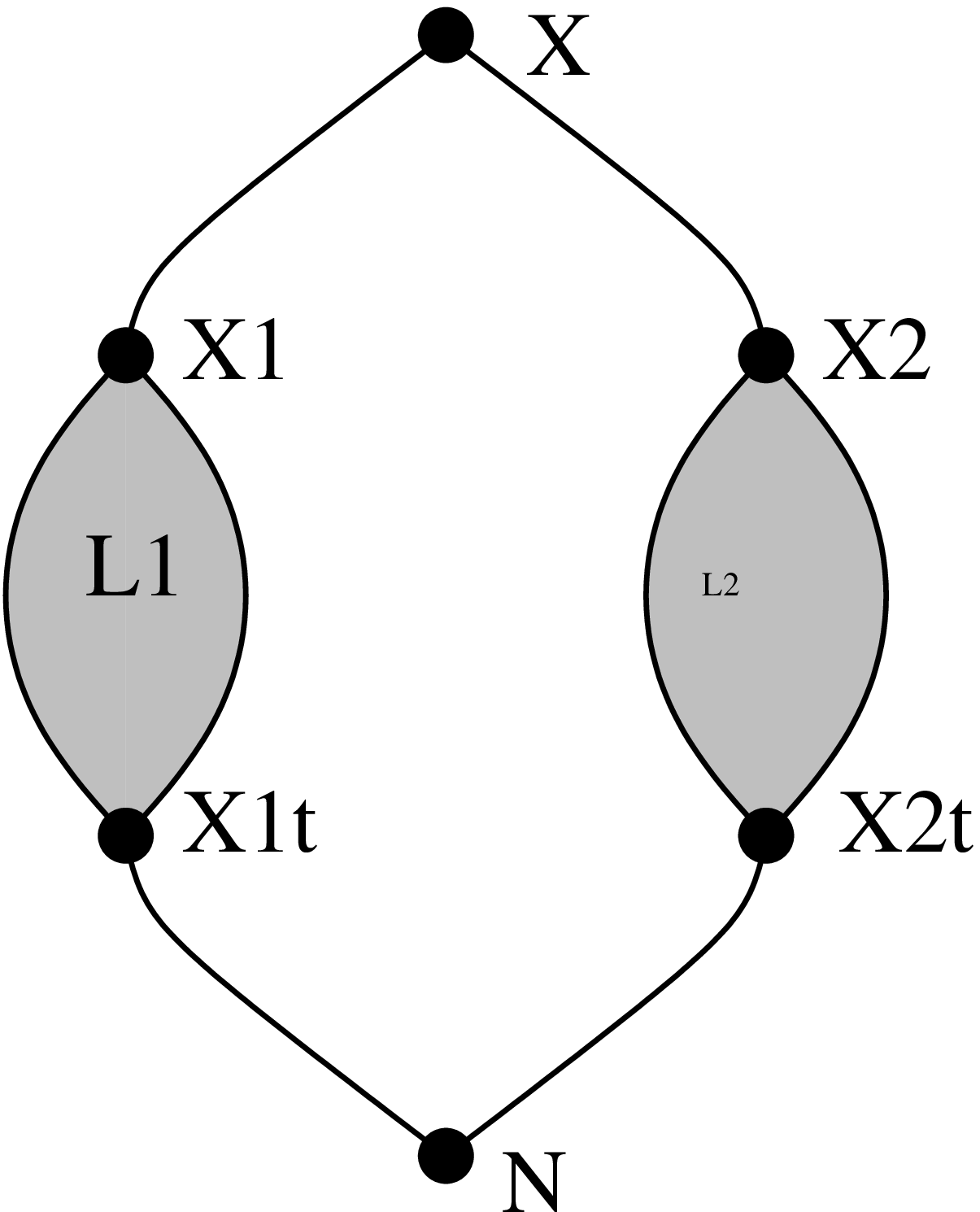}\\[.5em]
$\cO^{X_i}(X_i)\neq\nul$,\\ $i=1,2$
\end{center}
}
 \caption{The lattice $L$ of closed sets in  a disconnected graph} \label{fig:lattice2}
 \end{figure}
Now suppose that
$\G$  has connected components
$\G_1,\ldots ,\G_m$, where $V(\G_i)=X_i$. Assume that
$\cO^{X_i}(X_i)\neq \nul$, for $i=1,\ldots ,r$ and that
$\cO^{X_i}(X_i)= \nul$, for $i>r$.
A straightforward induction using Proposition \ref{prop:disjoint}
shows that
the lattice $\CS(\G)$ takes
the form shown in Figure \ref{fig:lattice3}: where we use the obvious
extension of the notation introduced above for the lattices $\CS(\G_i)$.
  \begin{figure}[!h]
    \centering
    \psfrag{X}{$X$}
    \psfrag{L1}{$L_1$}
    \psfrag{Lr}{$L_r$}
    \psfrag{L'r1}{$L_{r+1}^\prime$}
    \psfrag{X1}{$X_1$}
    \psfrag{X1t}{$X_1^\perp$}
    \psfrag{Lr}{$L_r$}
    \psfrag{L'1}{$L_1^\prime$}
    \psfrag{L'm}{$L_m^\prime$}
    \psfrag{Xr}{$X_r$}
    \psfrag{Xrt}{$X_r^\perp$}
    \psfrag{N}{$\emptyset$}
    \begin{center}
      \includegraphics[scale=0.4]{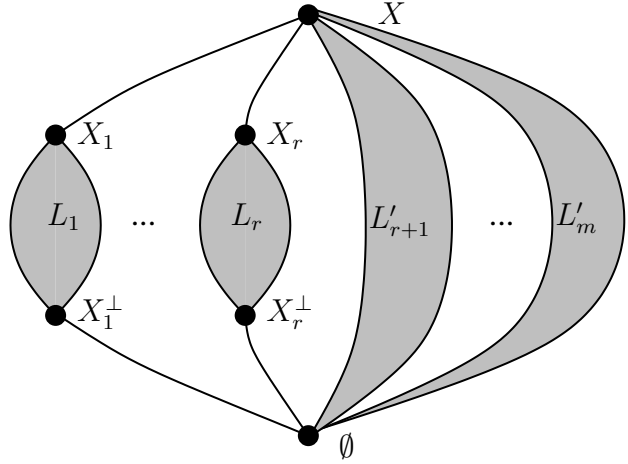}
    \end{center}
    \caption{The lattice $L$ of  the graph with
      connected components $\G_1,\ldots ,\G_m$.}
    \label{fig:lattice3}
  \end{figure}
We may often therefore reduce to the study of $\CS(\Gamma)$ where $\G$ is
a connected graph.

Now suppose that $X^\perp \ne \emptyset$ and set $X^*=X\backslash
X^\perp$. Let $\G(X^*)=\Gamma^*$ the full subgraph of $\Gamma$ with
vertex set $X^*$.
\begin{prop}
The set $\cO^{X^*}(X^*)=\nul$ and
the  lattice $\CS(\G)$ is isomorphic to the lattice $\CS(\G^*)$.
\end{prop}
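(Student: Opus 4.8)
The plan is to produce an explicit lattice isomorphism $\phi\colon\CS(\G^*)\to\CS(\G)$ given by $\phi(C)=C\cup X^\perp$. Write $X=X^*\sqcup X^\perp$. The starting observation is that every vertex $v\in X^\perp=\cO^X(X)$ satisfies $d(v,x)\le 1$ for every $x\in X$, so $\G$ is the join $\G^*\oplus\G(X^\perp)$ and, crucially, $X^\perp\subseteq\cO^X(Y)$ for \emph{every} $Y\subseteq X$. The claim $\cO^{X^*}(X^*)=\nul$ is then immediate: if $u\in\cO^{X^*}(X^*)$ then $d(u,y)\le 1$ for all $y\in X^*$, while $d(u,v)\le 1$ for all $v\in X^\perp$ by the observation above, so $d(u,y)\le 1$ for all $y\in X$, i.e. $u\in\cO^X(X)=X^\perp$, contradicting $u\in X^*=X\setminus X^\perp$.

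Next I would establish the key identity
\[\cO^X(Y)=\cO^{X^*}(Y\cap X^*)\cup X^\perp\qquad(Y\subseteq X).\]
Indeed $Y\cap X^*\subseteq Y$ gives $\cO^X(Y)\subseteq\cO^X(Y\cap X^*)$ by Lemma~\ref{lem:orth}.\ref{it:orth3}; conversely any vertex orthogonal to $Y\cap X^*$ is automatically orthogonal to $Y\cap X^\perp$ (those vertices are joined to everything), so in fact $\cO^X(Y)=\cO^X(Y\cap X^*)$. Splitting this set along $X=X^*\sqcup X^\perp$ and using both $\cO^{X^*}(Z)=\cO^X(Z)\cap X^*$ (as noted after Proposition~\ref{prop:orthmap}) and $X^\perp\subseteq\cO^X(Y\cap X^*)$ yields the identity. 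Since, by Lemma~\ref{lem:cl}.\ref{it:cl2} and \ref{it:cl6}, $\CS(\G)$ is precisely the family of sets $\cO^X(Y)$, $Y\subseteq X$, and likewise $\CS(\G^*)$ is the family of sets $\cO^{X^*}(Z)$, $Z\subseteq X^*$, the identity shows that
\[\CS(\G)=\{\,C\cup X^\perp\ :\ C\in\CS(\G^*)\,\}.\]

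Finally I would check that $\phi(C)=C\cup X^\perp$ is a lattice isomorphism. It maps into $\CS(\G)$ and is onto by the displayed description of $\CS(\G)$, and the map $\psi\colon\CS(\G)\to\CS(\G^*)$, $\psi(D)=D\cap X^*$, is a two-sided inverse: $\psi(\phi(C))=(C\cup X^\perp)\cap X^*=C$ since $C\subseteq X^*$ and $X^*\cap X^\perp=\nul$, while $\phi(\psi(D))=(D\cap X^*)\cup X^\perp=D$ since $X^\perp\subseteq D$ and $X=X^*\sqcup X^\perp$; moreover $\psi(D)=\cO^X(Y)\cap X^*=\cO^{X^*}(Y\cap X^*)$ genuinely lies in $\CS(\G^*)$. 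Both $\phi$ and $\psi$ visibly preserve inclusion, so $\phi$ is an order isomorphism between lattices and therefore automatically preserves all infima and suprema; hence $\CS(\G)\cong\CS(\G^*)$. There is no serious obstacle: the one point needing care is that the graph metrics $d_\G$ and $d_{\G^*}$ differ in general (a path through $X^\perp$ can shorten $\G$-distances), but the orthogonal complement records only whether a distance is $\le 1$, i.e. adjacency, and adjacency between two vertices of $X^*$ is the same in $\G$ as in the full subgraph $\G^*$; once this is kept in mind, everything reduces to the formal properties already established.
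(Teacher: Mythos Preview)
Your proof is correct and follows essentially the same approach as the paper: both construct the isomorphism given by adding or removing $X^\perp$, using the fact that every closed set in $\G$ contains $X^\perp$. The paper defines the map in the direction $\CS(\G)\to\CS(\G^*)$, $Y\mapsto Y\setminus X^\perp$ (your $\psi$), whereas you build its inverse $C\mapsto C\cup X^\perp$ first and verify more of the details explicitly; the underlying idea is identical.
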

\begin{proof} From the definitions it follows that $\cO^X(X^*)=\cO^X(X)$.
Therefore $\cO^{X^*}(X^*)=\cO^{X}(X^*)\cap X^*= \cO^{X}(X)\cap X^*=\nul$. 
 If $Y=\cO^X(U)$, where
$U\in \CS(\G)$ then
$Y\backslash X^\perp =\cO^X(U)\backslash \cO^X(X)
=\cO^{X^*}(U\backslash \cO^X(X)).
$
Hence
the map $\phi:Y \to Y
\backslash X^\perp$
maps $\CS(\G)$ to $\CS(\G^*)$.

Clearly $\phi$ is inclusion preserving. To see that $\phi$ is surjective, 
note that if $V\subseteq X^*$ then $\cO^{X^*}(V)=C\backslash X^\perp$, where
$C=\cO^X(V\backslash \cO^X(X))$. Therefore
$\phi$ is a surjective homomorphism of partially  ordered sets.
Since $Y\in \CS(\G)$ implies $X^\perp\subseteq Y$ it follows
that $\phi$ is also injective; so $\phi$ is an isomorphism of lattices.
\end{proof}
The set $\cO^X(X)$ is called the \textit{kernel} of the graph
$\Gamma$. Given the proposition above we may restrict to
the study of lattices
with the trivial kernel.

Now suppose that $\G=\G(X_1)\oplus \G(X_2)$, for some partition
$X=X_1\cup X_2$ of $X$ (see Section \ref{sec:graph}).  
Let $\G_i=\G(X_i)$ and let $G_i=G(\G_i)$,
$i=1,2$; so
$G=G_1\times G_2$. 
\begin{prop}
In the above notation, if $\G=\G_1\oplus \G_2$ then 
$\CS(\G)=\CS(\G_1)\times\CS(\G_2)$.
\end{prop}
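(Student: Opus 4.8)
The plan is to show that the map $Y \mapsto (Y \cap X_1,\, Y \cap X_2)$ is a lattice isomorphism from $\CS(\G)$ onto $\CS(\G_1) \times \CS(\G_2)$, where the latter carries the coordinatewise order. The key structural fact to establish first is that in a join every vertex of $X_1$ is at distance $\le 1$ from every vertex of $X_2$, so for any $U_1 \subseteq X_1$ and $U_2 \subseteq X_2$ we have $\cO^X(U_1 \cup U_2) = \bigl(\cO^{X_1}(U_1) \cup X_2\bigr) \cap \bigl(X_1 \cup \cO^{X_2}(U_2)\bigr) = \cO^{X_1}(U_1) \cup \cO^{X_2}(U_2)$, using Lemma \ref{lem:orth}.\ref{it:orth5} and the fact that $\cO^X(U_i) \supseteq X_j$ for $j \ne i$ when $U_i \subseteq X_i$. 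In particular $\cO^X(U_i) = \cO^{X_i}(U_i) \cup X_j$.

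First I would use this to describe the closed sets of $\G$ explicitly: a set $Y$ lies in $\CS(\G)$ iff $Y = \cO^X(U)$ for some $U \subseteq X$, and writing $U = U_1 \cup U_2$ with $U_i = U \cap X_i$ gives $Y = \cO^{X_1}(U_1) \cup \cO^{X_2}(U_2)$. Hence $Y \cap X_i = \cO^{X_i}(U_i)$ (the two pieces are disjoint and sit inside $X_1$, $X_2$ respectively), so $Y \cap X_i \in \CS(\G_i)$, and conversely given $C_i \in \CS(\G_i)$, say $C_i = \cO^{X_i}(U_i)$, the set $C_1 \cup C_2 = \cO^X(U_1 \cup U_2)$ is closed in $\G$. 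This shows the map $\phi: Y \mapsto (Y \cap X_1, Y \cap X_2)$ is a well-defined bijection $\CS(\G) \to \CS(\G_1) \times \CS(\G_2)$ with inverse $(C_1, C_2) \mapsto C_1 \cup C_2$.

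Next I would check that $\phi$ is a lattice isomorphism. It is clearly order-preserving, and since it is a bijection whose inverse (disjoint union) is also order-preserving, it is an order isomorphism, hence automatically preserves $\wedge$ and $\vee$. Concretely: $\phi(Y \wedge Y') = \phi(Y \cap Y') = \bigl((Y\cap Y') \cap X_1, (Y \cap Y') \cap X_2\bigr) = \bigl((Y\cap X_1)\cap(Y'\cap X_1),\, (Y \cap X_2)\cap (Y' \cap X_2)\bigr) = \phi(Y) \wedge \phi(Y')$; and for the join, $\phi(Y \vee Y') = \phi(\cl(Y \cup Y'))$ — here one uses that $\cl$ in $\G$ decomposes along the two factors, which follows from the formula $\cO^X(U_1 \cup U_2) = \cO^{X_1}(U_1) \cup \cO^{X_2}(U_2)$ applied twice, giving $\cl^X(Y) = \cl^{X_1}(Y \cap X_1) \cup \cl^{X_2}(Y \cap X_2)$ — and this matches $\bigl(\cl^{X_1}((Y \cup Y')\cap X_1), \cl^{X_2}((Y\cup Y')\cap X_2)\bigr) = \phi(Y) \vee \phi(Y')$.

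The only mild subtlety — the main obstacle, such as it is — is bookkeeping around empty sets and the extremal elements: one must be careful that $\cO^{X_i}(\nul) = X_i$ and that the minimal element $\cO^X(X) = \cO^{X_1}(X_1) \cup \cO^{X_2}(X_2)$ corresponds correctly to the pair of minimal elements, and likewise $X \leftrightarrow (X_1, X_2)$; but these all fall out of the displayed orthogonality formula once it is established, since that formula already handles $U_i = \nul$. So essentially the whole proof reduces to the single computation $\cO^X(U_1 \cup U_2) = \cO^{X_1}(U_1) \cup \cO^{X_2}(U_2)$ for $U_i \subseteq X_i$, and everything else is formal.
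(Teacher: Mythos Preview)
Your argument is correct. The paper actually states this proposition without proof (there is even an internal comment in the source noting that a proof still needs to be supplied), so there is nothing to compare against; your write-up fills the gap cleanly. The core identity $\cO^X(U_1\cup U_2)=\cO^{X_1}(U_1)\cup\cO^{X_2}(U_2)$ for $U_i\subseteq X_i$ is exactly the right observation, and it does follow from $\cO^X(U_i)=\cO^{X_i}(U_i)\cup X_j$ together with Lemma~\ref{lem:orth}.\ref{it:orth5} and the disjointness of $X_1$ and $X_2$. One small point worth making explicit for the reader: the equality $\cO^X(U_i)\cap X_i=\cO^{X_i}(U_i)$ uses only that adjacency (not distance) within $X_i$ is the same in $\G$ and $\G_i$, which holds because $\G_i$ is a full subgraph; this is implicit in your derivation but easy to overlook since distances in a join can shrink. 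After that, your bijection $Y\mapsto(Y\cap X_1,Y\cap X_2)$ with inverse $(C_1,C_2)\mapsto C_1\cup C_2$ is an order isomorphism between posets, hence a lattice isomorphism, and your explicit check of $\wedge$ and $\vee$ is a welcome redundancy.
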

In this case  the study of the lattice $\CS(\G)$ reduces to the study of
$\CS(\G_1)$ and $\CS(\G_2)$.
\subsection{Adjoining Vertices}\label{sec:av}

We now consider the effect on the lattice of closed sets of the
addition to $\G$, or removal from $\G$, of a vertex. In particular we
shall see how the heights of these lattices are related and
how
to make restrictions on the way in which the new vertex is added to
obtain isomorphism of the two lattices.

We shall see below that if we adjoin a single vertex to $\G$ then the height
of the lattice of closed sets of the new graph is equal to
$h(\CS(\G))+k$, where $k=0,1$ or $2$.

As usual $\G$ is a graph with $V(\G)=X$ and edges $E(\G)$.
Let $t$ be an element not in $X$
and define $\ov{X}=X\cup\{t\}$. Let $J_t$ be a subset of $X$.
Define $\ov\G$ to be the graph with vertices
$\ov X$ and edges $E(\G)\cup E_t$, where $E_t$ is the set
$E_t=\{(t,x)|x\in J_t\}$.
Let $L=\CS(\G)$ and $\ov L=\CS(\ov \G)$.

In order to understand how $L$ and $\ov L$ are related
we introduce a lattice intermediate between $L$ and $\ov L$.
This will help us to give a simple description of the structure
of the lattice $\ov L$ in terms of the lattice $L$.
Let
\[L_t=\{Y\subseteq X| Y=C\cap J_t, \textrm{ where } C\in L\}.\]
Now define the set of subsets $\tilde L$ of $X$ to be
$\tilde L=L\cup L_t$. We shall see that $\tilde L$ is a lattice
which embeds in the lattice $\ov L$. 
Note that if $Y\in L_t$ then $Y=C\cap J_t$, for some $C\in L$, so
\[Y=Y\cap J_t\subseteq \cl(Y)\cap J_t\subseteq \cl(C)\cap J_t=C\cap J_t=Y.\]
Hence $Y=\cl(Y)\cap J_t$ and it follows that $\cl(Y)$ is the minimal
element of $L$ which intersects with $J_t$ to give $Y$, for all $Y\in L_t$. 
Setting $Z=\cl(Y)$ this gives
$Z=\cl(Y)=
\cl(\cl(Y)\cap J_t)=\cl(Z\cap J_t)$.
Also if $Y\in L_t\backslash L$ then $\cl(Y)\neq Y=\cl(Y)\cap J_t$, so $Z=\cl(Y)\nsubseteq J_t$.
Conversely, given $Z\in L$ such that $Z\nsubseteq J_t$ and $Z=\cl(Z\cap J_t)$ then
$\cl(Z\cap J_t)\neq Z\cap J_t$, so $Z\cap J_t\in L_t\backslash L$. Therefore
\begin{equation}\label{eq:L_t}
L_t\backslash L=\{Y=Z\cap J_t|Z\in L, Z\nsubseteq J_t \textrm{ and } Z=\cl(Z\cap J_t)\}.
\end{equation}
We define a closure operation $\icl^X=
\icl$ on subsets of $X$ by
\[
\icl(U)=
\left\{
\begin{array}{ll}
\cl^X(U), & \textrm { if } U\nsubseteq J_t\\
\cl^X(U)\cap J_t, & \textrm { if } U\subseteq J_t
\end{array}
\right.
,
\]
for $U\subseteq X$. Then $\icl(U)\in \tilde L$, for all $U\subseteq X$.

Now assume that  $Y_1$ and $Y_2$ are in $\tilde L$ and
$Y_1\subseteq Y_2$. If $Y_1\nsubseteq J_t$ then $\icl(Y_1)=\cl^X(Y_1)$ and
$\icl(Y_2)=\cl^X(Y_2)$ so $\icl(Y_1)\subseteq \icl(Y_2)$. If $Y_1\subseteq J_t$
then $\icl(Y_1)=\cl^X
(Y_1)\cap J_t\subseteq \cl^X
(Y_2)\cap J_t\subseteq \icl(Y_2)$.
Therefore $\icl$ is an inclusion preserving map from subsets of $X$ to 
$\tilde L$. It also follows from the definition and \ref{eq:L_t} that 
$\icl(U)=U$, for all $U\in \tilde L$, so $\tilde L$ is a retract of $X$.
We may therefore make $\tilde L$ into a lattice by defining
\[Y_1{\wedge} Y_2=\icl(Y_1\cap Y_2)\textrm{ and } Y_1{\vee} Y_2=\icl(Y_1\cup Y_2),\]
for $Y_1,Y_2\in \tilde L$.
\begin{lem}\label{lem:iclinf}
If $U, V\in \tilde L$ then $U\wedge V=U\cap V$
and
\[
U\vee V=
\left\{
\begin{array}{ll}
\cl(U\cup V)\cap J_t, & \textrm{ if } U\cup V\subseteq J_t\\
\cl(U\cup V), &\textrm{ otherwise}
\end{array}
\right. .
\]
\end{lem}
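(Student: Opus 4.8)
The plan is to dispatch the two identities separately; the formula for $\vee$ will be essentially immediate, while the identity $U\wedge V=U\cap V$ needs one small observation.

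First I would record the general principle, already available from the discussion preceding the lemma, that $\icl$ sends every subset of $X$ into $\tilde L$ and fixes every element of $\tilde L$; consequently $\icl(W)=W$ if and only if $W\in\tilde L$. Since by definition $U\wedge V=\icl(U\cap V)$, proving $U\wedge V=U\cap V$ reduces to showing $U\cap V\in\tilde L$. For this I would write each of $U,V$ in one of the two shapes available to elements of $\tilde L$: either $C$ with $C\in L$, or $C\cap J_t$ with $C\in L$. Intersecting, $U\cap V$ equals $C_1\cap C_2$ (if both are of the first shape) or $(C_1\cap C_2)\cap J_t$ (if at least one is of the second shape), where $C_1,C_2\in L$. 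By Lemma \ref{lem:cs}, $C_1\cap C_2\in L$, so in the first case $U\cap V\in L\subseteq\tilde L$ and in the second $U\cap V\in L_t\subseteq\tilde L$; either way $U\wedge V=\icl(U\cap V)=U\cap V$.

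For the supremum, $U\vee V=\icl(U\cup V)$ by definition, and the definition of $\icl$ gives $\icl(U\cup V)=\cl(U\cup V)\cap J_t$ when $U\cup V\subseteq J_t$ and $\icl(U\cup V)=\cl(U\cup V)$ otherwise. Since $U\cup V\subseteq J_t$ holds exactly when $U\subseteq J_t$ and $V\subseteq J_t$, this is the asserted formula, and there is nothing further to check.

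There is no real obstacle in this argument; the only thing to be careful about is that the case split for the meet is genuinely exhaustive, and that degenerate sets (such as $\nul$ or $X^\perp$) that might arise as $U\cap V$ still lie in $\tilde L$ — which they do, automatically, once $U\cap V$ has been written in one of the two shapes above. The one external input is the closure of $L$ under intersection, Lemma \ref{lem:cs}, together with the retraction property of $\icl$ onto $\tilde L$ established just before the statement.
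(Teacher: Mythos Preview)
Your argument is correct. For the join you match the paper exactly: it is literally the definition of $\icl$ applied to $U\cup V$. For the meet, both you and the paper split into the case where both $U,V\in L$ (handled identically, via $L$ being closed under intersection) and the case where at least one lies in $L_t$; the difference is only in how that second case is dispatched. You observe that $U\cap V=(C_1\cap C_2)\cap J_t$ with $C_1\cap C_2\in L$, hence $U\cap V\in L_t\subseteq\tilde L$, and conclude via the retraction property of $\icl$. The paper instead computes $\icl(U\cap V)=\cl(U\cap V)\cap J_t$ directly and squeezes it between $U\cap V$ and itself using $\cl(U\cap V)\cap J_t\subseteq\cl(U)\cap\cl(V)\cap J_t=U\cap V\subseteq\cl(U\cap V)\cap J_t$, invoking the identity $\cl(W)\cap J_t=W$ for $W\in L_t$ established just before \eqref{eq:L_t}. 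Your route is marginally cleaner in that it makes explicit the structural fact that $\tilde L$ is closed under intersection, whereas the paper's inclusion chain keeps everything at the level of the closure operator; but the two arguments are close variants using the same ingredients.
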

\begin{proof}
The expression for $U\vee V$ is merely a restatement of the definitions.
If   $U\in L$ then $\icl(U)=\cl^X
(U)$. Therefore, for $U$ and $V$ in
$L$ we have (in the lattice $\tilde L$)
$U{\wedge} V=U\cap V$. If either $U$ or $V$ belongs to $L_t$ then
$U\cap V\subseteq J_t$ so
\[U{\wedge} V=\cl(U\cap V)\cap J_t\subseteq \cl(U)\cap \cl(V)\cap J_t=U\cap V\subseteq
\cl(U\cap V)\cap J_t\]
and  the Lemma follows.
\end{proof}

\begin{defn}\label{defn:ebg}
Define $\tilde \b$ to be the inclusion map of $L$ into $\tilde L$ and
$\tilde \g$  to be the map from $\tilde L$ to $L$ given by
$\tilde \g(Y)=\cl^X(Y)$, for $Y\in \tilde L$.
\end{defn}
\begin{lem}\label{lem:ebg}
The maps $\tilde \b$ and $\tilde \g$ are homomorphisms of partially ordered
sets and  $\tilde \g\tilde \b=\id_L$. We have $\tilde \b(Y\wedge Z)=\tilde \b(Y)\wedge \tilde \b(Z)$,
for all $Y,Z\in L$, and
$\tilde \g(U\vee V) =\tilde \g(U)\vee \tilde \g(V)$, for all $U,V\in \tilde L$.
If $U,V\in \tilde L$ such that $U\neq V$ and $\tilde \g(U)=\tilde \g(V)$ then
(after interchanging $U$ and $V$ if necessary) $U\in L\backslash L_t$ and
$V\in L_t\backslash L$ and $U=\cl^X(V)$.
\end{lem}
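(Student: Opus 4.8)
The plan is to verify each assertion in Lemma~\ref{lem:ebg} directly from the definitions of $\tilde\b$, $\tilde\g$, the operator $\icl$, and the lattice structure on $\tilde L$ established just before the lemma. Most of the work is bookkeeping with the closure operator $\cl^X$ and the case split $U\subseteq J_t$ versus $U\nsubseteq J_t$ that underlies $\icl$; the genuinely substantive point is the last sentence, which pins down exactly when $\tilde\g$ identifies two elements.

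First I would check that $\tilde\b$ and $\tilde\g$ are homomorphisms of partially ordered sets: $\tilde\b$ is an inclusion, hence trivially order-preserving, and $\tilde\g=\cl^X$ is order-preserving by Lemma~\ref{lem:cl}.\ref{it:cl4}. For $\tilde\g\tilde\b=\id_L$, note that if $Y\in L$ then $Y$ is closed, so $\tilde\g(Y)=\cl^X(Y)=Y$. For $\tilde\b(Y\wedge Z)=\tilde\b(Y)\wedge\tilde\b(Z)$ with $Y,Z\in L$: the meet in $L$ is $Y\cap Z$, and by Lemma~\ref{lem:iclinf} the meet in $\tilde L$ is also $Y\cap Z$, so both sides equal $Y\cap Z$. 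For $\tilde\g(U\vee V)=\tilde\g(U)\vee\tilde\g(V)$ with $U,V\in\tilde L$: using Lemma~\ref{lem:iclinf}, $U\vee V$ is either $\cl(U\cup V)$ or $\cl(U\cup V)\cap J_t$, and applying $\cl^X$ to either (and using Lemma~\ref{lem:cl}.\ref{it:cl3} together with Lemma~\ref{lem:cl}.\ref{it:cl9} in the second case, since $U\cup V\subseteq\cl(U\cup V)$) yields $\cl(U\cup V)=\cl(\cl U\cup\cl V)$ by Lemma~\ref{lem:cl}.\ref{it:cl10}, which is exactly $\tilde\g(U)\vee\tilde\g(V)$ computed in $L$.

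For the final claim, suppose $U,V\in\tilde L$ with $U\neq V$ and $\cl^X(U)=\cl^X(V)$. If both $U,V\in L$ then $U=\cl^X(U)=\cl^X(V)=V$, contradiction; if both lay in $L_t\backslash L$, then by~\eqref{eq:L_t} each is recovered from its closure as $U=\cl^X(U)\cap J_t$ and $V=\cl^X(V)\cap J_t$, forcing $U=V$, again a contradiction. Hence (after swapping) $U\in L\backslash L_t$ and $V\in L_t\backslash L$; since $V\in L_t$, the discussion preceding~\eqref{eq:L_t} gives $V=\cl^X(V)\cap J_t$, and $\cl^X(V)=\cl^X(U)=U$ since $U$ is closed, so $U=\cl^X(V)$ as claimed (and $V=U\cap J_t$).

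The main obstacle, such as it is, will be keeping the two-way case analysis for $\icl$ straight when verifying $\tilde\g(U\vee V)=\tilde\g(U)\vee\tilde\g(V)$ in the subcase $U\cup V\subseteq J_t$: one must confirm that applying $\cl^X$ to $\cl(U\cup V)\cap J_t$ recovers $\cl(U\cup V)$, which is precisely Lemma~\ref{lem:cl}.\ref{it:cl9} with $Y_1=U\cup V$ (or rather $Y_1$ any set with $\cl(Y_1)=\cl(U\cup V)$) and $Y_2=J_t$ — everything else is routine once the relevant items of Lemmas~\ref{lem:orth} and~\ref{lem:cl} are cited.
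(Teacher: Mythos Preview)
Your argument matches the paper's almost line for line. The only slip is in the final paragraph: your case split (``both in $L$'' versus ``both in $L_t\backslash L$'') does not cover the possibility $U\in L\cap L_t$, $V\in L_t\backslash L$; but the identity $Y=\cl^X(Y)\cap J_t$ in fact holds for every $Y\in L_t$ (this is established in the discussion just before~\eqref{eq:L_t}, not only for $L_t\backslash L$), so replacing $L_t\backslash L$ by $L_t$ in your second case rules this out and recovers exactly the paper's proof.
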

\begin{proof}
The first statement is a direct consequence of the definitions, as is the
fact that $\tilde \b$ respects the lattice infimum operation.
For all $U,V\in \tilde L$ we have
\[\tilde \g(U)\vee \tilde \g(V)=\cl(\cl(U)\cup\cl(V))=\cl(U\cup V),\]
from Lemma \ref{lem:cl}.\ref{it:cl10}.
If $U\cup V\nsubseteq J_t$ then
$\tilde \g(U\vee V)=\cl\cl(U\cup V)=\cl(U\cup V)$. On the other hand,
if $U\cup V\subseteq J_t$ then
$\tilde \g(U\vee V)=\cl(\cl(U\cup V)\cap J_t)=\cl(U\cup V)$, using
Lemma \ref{lem:cl}.\ref{it:cl9}. Hence $\tilde \g(U)\vee \tilde \g(V)=
\tilde \g(U\vee V)$,
 for all $U,V\in \tilde L$.

Let $U,V\in \tilde L$. If $U,V\in L$ then $\tilde \g(U)=\tilde \g(V)$ implies
$U=V$. If $U,V\in L_t$ then $U=\cl(U)\cap J_t$ and $V=\cl(V)\cap J_t$ and
$\cl(U)=\cl(\cl(U)\cap J_t)=\tilde \g(U)$ and similarly $\cl(V)=\tilde \g(V)$.
Therefore $\tilde \g(U)=\tilde \g(V)$ implies that
$U=\cl(U)\cap J_t=\cl(V)\cap J_t=V$. Therefore, if $U\neq V$ and
$\tilde \g(U)=\tilde \g(V)$ then one of $U,V$ is in $L\backslash L_t$ and
the other in
$L_t\backslash L$. Assume then that $U\in L\backslash L_t$ and
$V\in L_t\backslash L$. In this case $U=\tilde \g(U)=\tilde \g(V)
=\cl(\cl(V)\cap J_t)=\cl(V)$.
\end{proof}
  In general $\tilde \b$ does not preserve supremums and
$\tilde \g$ does not preserve infimums.
\begin{expl}\label{ex:path7}
In the graph of  Figure \ref{fig:path7} the sets $B=\{b\}$ and $C=\{c\}$ are
closed.  The supremum $B\vee C=\cl(B\cup C)=\{b,c,y\}$ and setting
$J_t=\{b,c\}$ we have
$\tilde \b(B\vee C)=\{b,c,y\}$ and $\tilde\b(B)\vee \tilde\b(C)=
\cl(\{b,c\})\cap \{b,c\}=\{b,c\}$. In the same graph
$\cl(x)=\{a,x,c\}$ and $\cl(y)=\{b,y,c\}$.
Set
$J_t=\{x,y\}$ and then $U=\cl(x)\cap J_t=\{x\}$ and $V=\cl(y)\cap J_t
=\{y\}$ are both elements of $L_t$. Now $U\wedge V=\emptyset$ so
$\tilde \g(U\wedge V)=\cl(\emptyset)=\emptyset$. However
$\tilde\g(U)\wedge \tilde \g(V)=\cl(x)\cap \cl(y)=\{c\}$.
\end{expl}
\begin{figure}
\psfrag{a}{$a$}
\psfrag{b}{$b$}
\psfrag{c}{$c$}
\psfrag{e}{$e$}
\psfrag{f}{$f$}
\psfrag{x}{$x$}
\psfrag{y}{$y$}
\begin{center}
\includegraphics[scale=0.4]{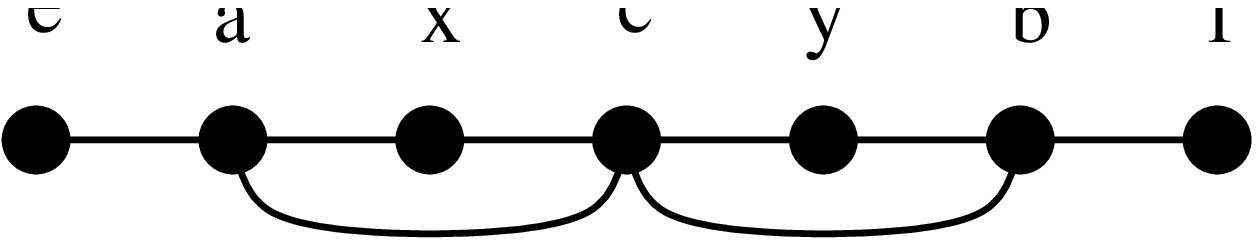}
\end{center}
\caption{Example \ref{ex:path7}}\label{fig:path7}
\end{figure}

Next we show that the lattice $\tilde L$ is embedded, as a partially ordered
set,
in $\ov L$.
\begin{defn}\label{defn:ovbg}
Let $\ov \b:\tilde L\maps \ov L$ and $\ov \g:\ov L\maps \tilde L$
be the maps  given by $\ov \b(Y)=\cl^{\ov X}(Y)$, for $Y\in \tilde L$
and $\ov \g(Z)=\icl(Z\backslash \{t\})$,
for $Z\in \ov L$.
\end{defn}
\begin{lem}\label{lem:ovbg} The maps $\ov \b$ and $\ov \g$ are homomorphisms
of partially ordered sets and $\ov \g\ov \b= \id_{\tilde L}$; so
$\ov \b$ is injective and $\ov \g$ is surjective. We have
$\ov \g(Z)=Z\backslash \{t\}$, for all $Z\in \ov L$, and
 \begin{equation}\label{eq:ovb}
\ov \b(Y)=
\left\{
\begin{array}{ll}
Y & \textrm{ if } \cO^X(Y)\nsubseteq J_t,\\
Y\cup \{t\} & \textrm{ if } \cO^X(Y)\subseteq J_t
\end{array}
\right.
,
\end{equation}
for all $Y\in \tilde L$.

If $Z_1$ and $Z_2$ are elements of $\ov L$ such that $Z_1\neq Z_2$ then
$\ov \g(Z_1)=\ov \g(Z_2)$ if and only if  (after interchanging $Z_1$ and $Z_2$
if necessary)  
$t\in Z_1$ and $Z_2=Z_1\backslash \{t\}\in \ov L$.
\end{lem}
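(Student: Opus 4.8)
The plan is to verify the four assertions in turn, exploiting the structure already developed for $\cl^{\ov X}$ and $\icl$. First I would establish the formula $\ov \g(Z)=Z\backslash\{t\}$ for $Z\in \ov L$: since $Z=\cl^{\ov X}(Z)$, intersecting with $X$ gives a closed set of $\G$ if $Z\backslash\{t\}\nsubseteq J_t$, while if $Z\backslash\{t\}\subseteq J_t$ one checks $\cl^X(Z\backslash\{t\})\cap J_t = Z\backslash\{t\}$ using that $t$ is joined to exactly $J_t$; either way $\icl(Z\backslash\{t\})=Z\backslash\{t\}$. Next, for \eqref{eq:ovb}, given $Y\in \tilde L$ we compute $\cl^{\ov X}(Y)=\cO^{\ov X}(\cO^{\ov X}(Y))$. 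The key observation is that $t\in \cl^{\ov X}(Y)$ iff every vertex of $\cO^{\ov X}(Y)$ is joined to $t$, i.e. iff $\cO^{\ov X}(Y)\subseteq J_t$; and since $Y\subseteq X$ one has $\cO^{\ov X}(Y)=\cO^X(Y)$ or $\cO^X(Y)\cup\{t\}$ according to whether $Y\subseteq J_t$, but in either case $\cO^{\ov X}(Y)\cap X=\cO^X(Y)$, and the condition reduces to $\cO^X(Y)\subseteq J_t$ (when $Y\subseteq J_t$, $t$ is adjacent to every vertex of $Y$ so $t\in\cO^{\ov X}(Y)$ automatically, and then $t\in\cl^{\ov X}(Y)$ iff $t$ is joined to every vertex of $\cO^{\ov X}(Y)$, i.e. iff $\cO^X(Y)\subseteq J_t$). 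This gives \eqref{eq:ovb}, and one checks $\cl^{\ov X}(Y)\cap X=Y$ so that $\ov\b(Y)\in\ov L$.

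With these formulas in hand, $\ov\g\ov\b=\id_{\tilde L}$ is immediate: $\ov\g(\ov\b(Y))=\icl(\ov\b(Y)\backslash\{t\})=\icl(Y)=Y$ since $Y\in\tilde L$ is a fixed point of $\icl$. That $\ov\b$ and $\ov\g$ are order-preserving follows because $\cl^{\ov X}$ and $\icl$ are inclusion-preserving (Lemma \ref{lem:cl}) and $Z\mapsto Z\backslash\{t\}$ is obviously monotone; injectivity of $\ov\b$ and surjectivity of $\ov\g$ are formal consequences of $\ov\g\ov\b=\id$.

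For the final claim, suppose $Z_1\neq Z_2$ in $\ov L$ with $\ov\g(Z_1)=\ov\g(Z_2)$, i.e. $Z_1\backslash\{t\}=Z_2\backslash\{t\}$. Then $Z_1$ and $Z_2$ differ only in whether $t$ belongs, so exactly one contains $t$; after relabelling, $t\in Z_1$ and $Z_2=Z_1\backslash\{t\}$, and $Z_2\in\ov L$ by assumption. Conversely if $t\in Z_1$ and $Z_2=Z_1\backslash\{t\}\in\ov L$ then trivially $Z_1\backslash\{t\}=Z_2=Z_2\backslash\{t\}$, so $\ov\g(Z_1)=\ov\g(Z_2)$, and $Z_1\neq Z_2$ since $t\in Z_1\setminus Z_2$. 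The main obstacle is the careful bookkeeping in the proof of \eqref{eq:ovb}: one must track precisely how $\cO^{\ov X}$ acts on subsets of $X$ versus subsets containing $t$, and in particular verify that $\ov\b(Y)$ really lands in $\ov L$ (equivalently that $\cl^{\ov X}(Y)\cap X = \icl(Y)=Y$), splitting into the cases $Y\subseteq J_t$ and $Y\nsubseteq J_t$ and using that $J_t$ is exactly the set of neighbours of $t$.
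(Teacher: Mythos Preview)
Your proposal is correct and follows essentially the same approach as the paper: both establish the explicit formulas for $\ov\b$ and $\ov\g$ by case analysis on whether $Y\subseteq J_t$ and whether $\cO^X(Y)\subseteq J_t$, and then read off $\ov\g\ov\b=\id_{\tilde L}$ and the description of the fibres of $\ov\g$ from these formulas. The only cosmetic difference is that the paper proves \eqref{eq:ovb} first and then verifies $\ov\g(Z)=Z\backslash\{t\}$ via a more explicit four-way case split (according to whether $t\in Z$ and whether $t\in \cO^{\ov X}(Z)$), whereas you handle $\ov\g$ first with a two-case argument; also note that ``$\ov\b(Y)\in\ov L$'' is automatic since $\ov\b(Y)=\cl^{\ov X}(Y)$, and the real content you identify---$\cl^{\ov X}(Y)\cap X=Y$ for $Y\in\tilde L$---is exactly what the paper's case analysis establishes.
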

\begin{proof}
Since the closure operations in $\tilde L$ and $\ov L$ preserve inclusion
of sets it follows from the definitions that
$\ov \b$ and $\ov \g$ are homomorphisms
of partially ordered sets.

Now let $U\in \tilde L$. If $U\nsubseteq J_t$
then $\cO^{\ov X}(U)=\cO^X(U)$.  On the other hand if $U\subseteq J_t$
then $\cO^{\ov X}(U)=\cO^X(U)\cup \{t\}$.
Therefore, if  $U\nsubseteq J_t$ then
\[
\ov \b(U)=\cO^{\ov X}\cO^{X}(U)=
\left\{
\begin{array}{ll}
\cO^{X}\cO^{X}(U), & \textrm{ if } \cO^{X}(U)\nsubseteq J_t\\
\cO^{X}\cO^{X}(U)\cup \{t\}, & \textrm{ if } \cO^{X}(U)\subseteq J_t
\end{array}
\right.
\]
and \eqref{eq:ovb} holds as $U\nsubseteq J_t$ implies that $U\in L$.
If $U\subseteq J_t$ then
\[\ov \b(U)=\cO^{\ov X}(\cO^{X}(U)\cup \{t\})=
\cO^{\ov X}(\cO^{X}(U))\cap(J_t\cup \{t\})\]
so
\[
\ov \b(U)=
\left\{
\begin{array}{ll}
\cO^{X}\cO^{X}(U)\cap J_t, & \textrm{ if } \cO^{X}(U)\nsubseteq J_t\\
(\cO^{X}\cO^{X}(U)\cap J_t)\cup \{t\}, & \textrm{ if } \cO^{X}(U)\subseteq J_t
\end{array}
\right.
.
\]
In this case, as $U\subseteq J_t$ we have $\cO^{X}\cO^{X}(U)\cap J_t=
\cl^X(U)\cap J_t=U$. Thus, in all cases, \eqref{eq:ovb}
holds.

Now suppose that $Z\in \ov L$ and let $Y\in \ov L$ such that
$Z=\cO^{\ov X}(Y)$. If $t\in Y$ then
$Z=\cO^{\ov X}(Y)\subseteq  \cO^{\ov X}(t)=J_t\cup \{t\}$.
 Conversely if $Z\subseteq J_t\cup \{t\}$ then
$t\in Y=\cO^{\ov X}(Z)$. Hence
$Z\backslash \{t\} \subseteq J_t$ if and only if $t\in Y$.
Similarly $Y\backslash \{t\} \subseteq J_t$ if and only if $t\in Z$.
To show that $\ov \g(Z)=Z\backslash \{t\}$ we consider various cases.
\be
\item
Suppose that $t\in Z$ and that $t\notin Y$. Then
$Y\subseteq J_t$ and $Z=\cO^{\ov X}(Y)=\cO^X(Y)\cup \{t\}$.  Therefore
$Z\backslash \{t\}= \cO^X(Y)\in L$ and, since
$Z\backslash \{t\}\nsubseteq J_t$, it follows that
$\ov \g(Z)=\icl(Z\backslash \{t\})=Z\backslash \{t\}$.
\item
Assume that $t\in Z$ and $t\in Y$.
 Then
$Y\subseteq J_t\cup\{t\}$ and
\begin{align*}
Z
&=\cO^{\ov X}(Y)=\cO^{\ov X}((Y\backslash\{t\})\cup \{t\})\\
&=\cO^{\ov X}(Y\backslash\{t\})\cap  \cO^{\ov X}(t)\\
&=(\cO^X(Y\backslash\{t\})\cup \{t\})\cap (J_t \cup \{t\})\\
&=(\cO^X(Y\backslash\{t\})\cap J_t)\cup \{t\}.
\end{align*}
  Therefore
$Z\backslash \{t\}= \cO^X(Y\backslash\{t\})\cap J_t$ and, since
$Z\backslash \{t\}\subseteq J_t$, we have,
using Lemma \ref{lem:cl}.\ref{it:cl11},
$\icl(Z\backslash \{t\})=\cl(\cO^X(Y\backslash\{t\})\cap J_t)\cap J_t =
\cO^X(Y\backslash\{t\})\cap J_t$. Therefore
$
\ov \g(Z)=\icl(Z\backslash \{t\})=
Z\backslash \{t\}$.
\item Assume that $t\notin Z$ and $t\notin Y$. In this case
$Z= \cO^X(Y)\in L$ and, since
$Z\nsubseteq J_t$, it follows that
$\ov \g(Z)=\cl(Z)=Z=Z\backslash \{t\}$.
\item Assume that $t\notin Z$ and $t\in Y$. Since $t\notin Z$ this
means that $Z\subseteq J_t$ and
$\ov \g(Z)=\cl(Z)\cap J_t$. Now 
\begin{align*}
Z
&=\cO^{\ov X}(Y)=\cO^{\ov X}(Y\backslash\{t\})\cap  (J_t\cup \{t\})\\
&=\cO^X(Y\backslash\{t\})\cap J_t\in L_t,
\end{align*}
as $Y\backslash \{t\}\nsubseteq J_t$. Hence $Z=\cl(Z)\cap J_t$
and so $\ov \g(Z)=Z
=Z\backslash \{t\}$.
\ee
Thus $\ov \g(Z)=Z\backslash \{t\}$, for all $Z\in \ov L$.

Now suppose that $Z_1,Z_2\in \ov L$ such that $Z_1\neq Z_2$. Suppose that
$\ov \g(Z_1)
=\ov \g(Z_2)$. As $\g(Z_i)=Z_i\backslash \{t\}$ we must have, after
interchanging $Z_1$ and $Z_2$ if necessary, $Z_1=Z_2\cup \{t\}$;
so $t\in Z_1\in \ov L$ and $Z_1\backslash\{t\}\in \ov L$.
\end{proof}
\begin{defn}
Let $\b:L\maps \ov L$ be the map given by $\b(Y)=\cl^{\ov X}(Y)$, for
$Y\in L$. Let $\g:\ov L\maps L$ be the map given by $\g(Z)=\cl^X(X\cap Z)$,
for $Z\in \ov L$.
\end{defn}
\begin{cor}\label{cor:gb}
We have $\b=\ov \b\tilde \b$ and $\g=\tilde \g\ov \g$.
The maps $\b$ and $\g$ are homomorphisms of partially ordered sets.
For $Y\in L$ 
\[
\b(Y)=
\left\{
\begin{array}{ll}
Y & \textrm{ if } \cO^X(Y) \nsubseteq J_t,\\
Y\cup \{t\} & \textrm{ if } \cO^X(Y)\subseteq J_t
\end{array}
\right.
.
\]
Moreover $\g\b=\id_L$, $\b$ is injective and $\g$ is surjective.
\end{cor}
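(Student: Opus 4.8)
The plan is to deduce every assertion by composing the maps $\tilde\b,\tilde\g,\ov\b,\ov\g$ already analysed in Lemmas~\ref{lem:ebg} and~\ref{lem:ovbg}; no new construction is needed. First I would verify the two factorisations. Since $\tilde\b$ is the inclusion $L\hookrightarrow\tilde L$, for $Y\in L$ we have $\tilde\b(Y)=Y$, and as $\ov\b(U)=\cl^{\ov X}(U)$ for $U\in\tilde L$ this gives $\ov\b\tilde\b(Y)=\cl^{\ov X}(Y)=\b(Y)$; hence $\b=\ov\b\tilde\b$. For $Z\in\ov L$, Lemma~\ref{lem:ovbg} gives $\ov\g(Z)=Z\backslash\{t\}$, and since $Z\subseteq\ov X=X\cup\{t\}$ with $t\notin X$ we have $Z\backslash\{t\}=X\cap Z$; therefore $\tilde\g\ov\g(Z)=\cl^X(Z\backslash\{t\})=\cl^X(X\cap Z)=\g(Z)$, so $\g=\tilde\g\ov\g$.

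With the factorisations in hand the remaining assertions are formal. The maps $\b$ and $\g$ are homomorphisms of partially ordered sets because each is a composite of order-preserving maps: $\tilde\b$ is an inclusion and $\tilde\g,\ov\b,\ov\g$ are built from the inclusion-preserving closure operators $\cl^X$, $\cl^{\ov X}$ and $\icl$. For the explicit form of $\b$, note $L\subseteq\tilde L$ and $\b(Y)=\ov\b(\tilde\b(Y))=\ov\b(Y)$ for $Y\in L$; substituting $Y\in\tilde L$ into formula~\eqref{eq:ovb} of Lemma~\ref{lem:ovbg} produces exactly the displayed case split according to whether $\cO^X(Y)\subseteq J_t$ or not.

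Finally, $\g\b=\tilde\g\ov\g\ov\b\tilde\b=\tilde\g(\ov\g\ov\b)\tilde\b=\tilde\g\,\id_{\tilde L}\,\tilde\b=\tilde\g\tilde\b=\id_L$, invoking $\ov\g\ov\b=\id_{\tilde L}$ from Lemma~\ref{lem:ovbg} and $\tilde\g\tilde\b=\id_L$ from Lemma~\ref{lem:ebg}. Injectivity of $\b$ and surjectivity of $\g$ are then immediate from $\g\b=\id_L$. I do not expect any real obstacle here: the whole corollary is a bookkeeping consequence of the two preceding lemmas, and the only point calling for a moment's care is the identification $\ov\g(Z)=Z\backslash\{t\}=X\cap Z$, which is what makes the second factorisation agree with the given definition of $\g$.
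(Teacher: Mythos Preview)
Your proof is correct and is exactly the argument the paper intends: the corollary is stated without proof precisely because it is a routine composition of Lemmas~\ref{lem:ebg} and~\ref{lem:ovbg}, and you have supplied those details accurately, including the identification $\ov\g(Z)=Z\backslash\{t\}=X\cap Z$ and the use of \eqref{eq:ovb} for the explicit form of $\b$.
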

\subsection{The height of the extended lattice}\label{sec:HE}

In this section we determine  the possible differences
in height between the lattices $L$ and $\ov L$.
By a {\em strong ascending chain} in a partially ordered set $L$ is meant a
sequence $C_0, C_1\ldots $ of elements of $L$ such that
$C_i< C_{i+1}$, for all $i\ge 0$. {\em Strong descending chains} are defined
analogously, replacing $<$ by $>$. The {\em length} of a finite strong chain
$C_0, \ldots , C_d$ is $d$.
If $C_0, C_1\ldots $ is a sequence of elements of $L$ such that
$C_i\le C_{i+1}$, for all $i\ge 0$, then we call $C_0, C_1\ldots $
a {\em weak} ascending chain. Weak descending chains are defined analogously.
The {\em length} of a weak chain $\cC$ is the maximum of the lengths of
strong chains obtained by taking subsequences of $\cC$.
We shall from now on use
chain to mean either weak or strong chain, if the meaning is clear.
We denote the length of a chain $\cC$ by $l(\cC)$.
Let $L$ and $L^\prime$ be partially ordered sets and let $\phi:L\maps L^\prime$
be a homomorphism or anti-homomorphism of partially ordered sets.
If $\cC$ is a chain $C_0, \ldots , C_d$ in $L$ then
we denote by $\phi(\cC)$ the chain $\phi(C_0), \ldots ,\phi(C_d)$, in
$L^\prime$. Clearly the length of $\cC$ is greater than or equal to
the length of $\phi(\cC)$.
\begin{defn}
The {\em height} $h(L)$ of a lattice $L$ is
the
length of its maximal chain, if this exists, and is infinite otherwise.
\end{defn}

The following is a corollary of Lemmas \ref{lem:ebg} and \ref{lem:ovbg}
\begin{cor}\label{cor:hL}
$h(L)\le  h(\tilde L)\le h(\ov L)$.
\end{cor}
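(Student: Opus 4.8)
The plan is to derive this from a simple general principle about posets: if $A$ and $B$ are partially ordered sets for which there exist homomorphisms of partially ordered sets $f\colon A\maps B$ and $g\colon B\maps A$ with $gf=\id_A$, then $h(A)\le h(B)$. Granting this, the corollary follows by applying the principle twice: once with $(A,B,f,g)=(L,\tilde L,\tilde\b,\tilde\g)$, where Lemma \ref{lem:ebg} supplies the required maps and the identity $\tilde\g\tilde\b=\id_L$, yielding $h(L)\le h(\tilde L)$; and once with $(A,B,f,g)=(\tilde L,\ov L,\ov\b,\ov\g)$, where Lemma \ref{lem:ovbg} supplies the maps and $\ov\g\ov\b=\id_{\tilde L}$, yielding $h(\tilde L)\le h(\ov L)$.

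To prove the principle I would take an arbitrary chain $\cC\colon C_0<\cdots<C_d$ in $A$ and push it forward along $f$ to obtain $f(\cC)\colon f(C_0),\ldots,f(C_d)$ in $B$; since $f$ is order preserving this is at worst a weak chain in $B$. Applying $g$ and using $gf=\id_A$ gives $g(f(\cC))=\cC$, so by the observation recorded just before the definition of height (length of a chain is at least the length of its image under a homomorphism of partially ordered sets) we get $l(f(\cC))\ge l(g(f(\cC)))=l(\cC)=d$. Thus $B$ contains a chain of length at least $d$.

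It then remains to translate this into a statement about heights, taking care over the infinite case. If $h(A)$ is finite, I would choose $\cC$ realising the height, so the construction above produces a chain in $B$ of length $\ge h(A)$, whence $h(B)\ge h(A)$. If $h(A)=\infty$, then $A$ has chains of every length $d$, each giving a chain of length $\ge d$ in $B$, so $h(B)=\infty$ as well. In either case $h(A)\le h(B)$, as claimed.

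There is essentially no obstacle here; the genuine content sits in Lemmas \ref{lem:ebg} and \ref{lem:ovbg}, which provide the one-sided-inverse pairs of order-preserving maps, and in the already-noted monotonicity of chain length under homomorphisms. The only point needing a moment's attention is phrasing the final step so that it handles finite and infinite height uniformly, which is why I argue via ``for every $d$ there is a chain of length $\ge d$'' rather than manipulating the symbol $h$ directly.
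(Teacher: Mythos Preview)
Your proof is correct and follows essentially the same approach as the paper: push a chain forward along the injective order-preserving map $\tilde\b$ (respectively $\ov\b$) to get a chain of the same length in the target. The paper phrases this slightly more directly, using only the injectivity of $\tilde\b$ (which of course follows from $\tilde\g\tilde\b=\id_L$) to conclude that a strict chain maps to a strict chain of the same length, whereas you route through the retraction and the length-monotonicity observation; but the content is the same, and your careful treatment of the infinite-height case is simply superfluous here since all the lattices are finite.
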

\begin{proof}
If $\cC$ is a maximal chain in $L$ then $\tilde \b(\cC)$ is a chain in $\tilde L$.
As $\tilde \b$ is injective $\tilde \b(\cC)$ has the same length as $\cC$ and the
result follows. The second inequality follows similarly.
\end{proof}
\begin{expl}
Let $\G$ be the graph of Figure \ref{fig:path3} and let $J_t=\{a,c\}$.
Then $L$ consists of  $X$, the
orthogonal complements (in $X$) of $a$, $b$, $c$ and $d$,  and also
$\{b,c\}=\cO^{X}\{b,c\}$, $\{b\}=\cO^{X}\{a,c\}$,
$\{c\}=\cO^{X}\{b,d\}$ and $\emptyset$. Therefore $h(L)=4$. $\tilde L$ 
contains in addition the set $J_t$ and the set $\{a\}=J_t\cap \cO^X(a)$.
It follows that $h(\tilde L)=4$ as well. Finally, the maximal proper
subsets of $\ov L$ are
the orthogonal complements (in $\ov X$) 
of $a$, $b$, $c$ and $t$ (as $\cO^{\ov X}(d)\subseteq \cO^{\ov X}(c)$).
The only one of these sets with $4$ elements is $\cO^{\ov X}(c)$. 
However, the intersection of $\cO^{\ov X}(c)$ with any other proper
maximal subset has at most $2$ elements. Hence $\ov L$ can have
height at most $4$. As $h(\tilde L)=4$ it now follows that 
$h(\ov L)=h(\tilde L)=h(L)=4$.
\end{expl}

\begin{expl}
Let $\ov \G$ be the graph of Figure \ref{fig:path3} and $\G$ be
the graph obtained by removing vertex $c$. Then, with $t=c$ we 
have $X=\{a,b,d\}$ and  $J_t=\{b,d\}$. In this case $L$ consists
of the sets $X$, $\cO^X(a)$, $\cO^X(d)$ and $\emptyset$, so $h(L)=2$. 
$L_t$ contains in addition the sets $J_t$ and $\cO^X(a)\cap J_t=\{b\}$.
Thus $h(\tilde L)=3$. Moreover, from the previous example 
 $h(\ov L)=4$.
(The {\em semibraid} group on $n$ generators is the partially 
commutative group $G_n$ with presentation 
\[\la x_1,\ldots ,x_n| [x_i,x_j]=1, \textrm{ if } |i-j|\ge 2\ra.
\]
The graphs of this example are those of  $G_3$ 
and $G_4$, see \cite{DKR2} for further details)
\end{expl}
In fact these two examples illustrate the two extremes in 
differences of 
height between $L$ and $\tilde L$ and between $\tilde L$ and
$\ov L$: as the following propositions show.
\begin{prop}\label{prop:hintL}
$h(\tilde L)=h(L)+m$, where $m=0$ or $1$.
\end{prop}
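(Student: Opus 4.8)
The plan is to bound $h(\tilde L)$ above and below in terms of $h(L)$, using the retraction pair $(\tilde\b,\tilde\g)$ established in Lemma \ref{lem:ebg}. The lower bound $h(\tilde L)\ge h(L)$ is already Corollary \ref{cor:hL}, so the content is the upper bound $h(\tilde L)\le h(L)+1$.

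First I would take a maximal strong chain $\cC: C_0<C_1<\cdots <C_d$ in $\tilde L$, so $d=h(\tilde L)$, and push it forward under $\tilde\g$ to get a weak chain $\tilde\g(C_0)\le \tilde\g(C_1)\le\cdots\le \tilde\g(C_d)$ in $L$. The length of $\tilde\g(\cC)$ is at least $d$ minus the number of \emph{collapses}, i.e. indices $i$ with $\tilde\g(C_i)=\tilde\g(C_{i+1})$. By Lemma \ref{lem:ebg}, whenever $\tilde\g(C_i)=\tilde\g(C_{i+1})$ with $C_i\ne C_{i+1}$ we must have (after noting the chain is increasing) $C_i\in L_t\backslash L$ and $C_{i+1}\in L\backslash L_t$ with $C_{i+1}=\cl^X(C_i)$. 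The key structural point is that this can happen \emph{at most once} along the chain: I would argue that once we have passed a collapse at position $i$, every subsequent $C_j$ ($j>i$) lies in $L\backslash L_t$, so no further collapse is possible. Indeed $C_{i+1}\in L$, and if some later $C_j\in L_t\backslash L$ then $C_j\supseteq C_{i+1}$ would force $C_j=\cl^X(C_j)\cap J_t\supseteq C_{i+1}$; but $C_{i+1}=\cl^X(C_i)\nsubseteq J_t$ (since $C_i\in L_t\backslash L$, by \eqref{eq:L_t} its closure is not contained in $J_t$), contradicting $C_{i+1}\subseteq C_j\subseteq J_t$. Hence at most one collapse occurs, giving $l(\tilde\g(\cC))\ge d-1$, whence $h(L)\ge h(\tilde L)-1$.

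Combining with Corollary \ref{cor:hL} gives $h(L)\le h(\tilde L)\le h(L)+1$, which is the claim with $m\in\{0,1\}$. I would also remark that both values are attained, citing the two worked examples just before the proposition (the graph of Figure \ref{fig:path3} with $J_t=\{a,c\}$ gives $m=0$, and the subgraph example with $X=\{a,b,d\}$, $J_t=\{b,d\}$ gives $h(\tilde L)=3=h(L)+1$, so $m=1$).

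The main obstacle is the ``at most one collapse'' step: one must be careful that a weak chain in $\tilde L$ passing through elements of $L_t\backslash L$ cannot oscillate between $L_t$ and $L$. The cleanest way to handle this is to observe that $L\backslash L_t$ is \emph{upward closed} in $\tilde L$ — if $Y\in L$, $Y\nsubseteq J_t$, and $Y\subseteq Y'\in\tilde L$, then $Y'\nsubseteq J_t$ so $Y'=\icl(Y')=\cl^X(Y')\in L$ — together with the fact, noted above, that the top element $C_{i+1}$ produced by a collapse lies in $L\backslash L_t$. Once upward-closedness of $L\backslash L_t$ is in hand the argument is immediate; I expect this to be the one point requiring genuine care, the rest being bookkeeping with Lemma \ref{lem:ebg} and Corollary \ref{cor:hL}.
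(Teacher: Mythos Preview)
Your proposal is correct and follows essentially the same approach as the paper: push a maximal chain in $\tilde L$ forward under $\tilde\g$, invoke Lemma~\ref{lem:ebg} to control where strictness can fail, and combine with Corollary~\ref{cor:hL}. The only difference is cosmetic: the paper identifies a threshold index $r$ (the least index with $Z_i\in L$ for all $i\ge r$) and observes that the two resulting sub-chains are strict, whereas you count ``collapses'' and argue there is at most one via upward-closedness of $\{Y\in\tilde L:Y\nsubseteq J_t\}=L\backslash L_t$; these are the same observation phrased two ways.
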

\begin{proof}
Let
\[
\cC=Z_0<\cdots <Z_k
\]
be a strictly ascending chain in $\tilde L$, with $k=h(\tilde L)$.
Then $\tilde \g(\cC)$ is an ascending chain in $L$.
If $Z_i\in L$ for all $i$ then $\tilde \g(\cC)=\cC$,
so Lemma \ref{cor:hL}
implies that
$h(\tilde L)= h(L)$. Assume then that $Z_i\notin L$, for some $i$, and let
$r$ be the smallest integer such that $Z_r\in L$, for all $i\ge r$.
Then $Z_i\subseteq J_t$, 
so
$Z_i\in L_t$, for all $i\le r-1$.
 Using Lemma \ref{lem:ebg}, $\tilde\g(Z_r)<\cdots <\tilde\g(Z_k)$
and $\tilde \g(Z_0)<\cdots <\tilde \g(Z_{r-1})$ are strictly ascending chains in $L$.
The length of $\tilde \g(\cC)$ is therefore at least $k-1=h(\tilde L)-1$;
so $h(L)\ge h(\tilde L)-1$, and the lemma follows
from Lemma \ref{cor:hL}.
\end{proof}

\begin{prop}\label{prop:hovL}
$h(\ov L)=h(\tilde L) +m$, where $m=0$ or $1$.
\end{prop}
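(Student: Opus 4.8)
The plan is to mirror the proof of Proposition \ref{prop:hintL}, using the pair of maps $\ov\b:\tilde L\maps \ov L$ and $\ov\g:\ov L\maps \tilde L$ from Definition \ref{defn:ovbg} in place of $\tilde\b,\tilde\g$. By Corollary \ref{cor:hL} we already have $h(\ov L)\ge h(\tilde L)$, so it suffices to prove $h(\ov L)\le h(\tilde L)+1$. First I would take a strictly ascending chain $\cC:Z_0<\cdots<Z_k$ in $\ov L$ with $k=h(\ov L)$ and apply $\ov\g$ to it. By Lemma \ref{lem:ovbg}, $\ov\g(Z)=Z\smallsetminus\{t\}$ for every $Z\in\ov L$, and $\ov\g$ is a homomorphism of partially ordered sets, so $\ov\g(\cC)$ is a (weak) ascending chain in $\tilde L$. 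The only way consecutive terms of $\ov\g(\cC)$ can fail to be strictly increasing is when $Z_i\smallsetminus\{t\}=Z_{i+1}\smallsetminus\{t\}$, which forces $Z_{i+1}=Z_i\cup\{t\}$ (since $Z_i<Z_{i+1}$); by Lemma \ref{lem:ovbg} this can happen for at most one index $i$, because $t\in Z_{i+1}$ then implies $t\in Z_j$ for all $j\ge i+1$, while $t\notin Z_j$ for all $j\le i$.

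Concretely, I would argue as follows. If $t\notin Z_i$ for all $i$, or $t\in Z_i$ for all $i$, then restricting to $X$ (i.e. applying $\ov\g$) is injective on the chain, so $\ov\g(\cC)$ is a strict chain of length $k$ in $\tilde L$ and $h(\tilde L)\ge k=h(\ov L)$, giving $m=0$ together with Corollary \ref{cor:hL}. Otherwise let $r$ be the least index with $t\in Z_r$; then $t\notin Z_j$ for $j<r$ and $t\in Z_j$ for $j\ge r$ (since $Z_r\subseteq Z_j$). The two subchains $Z_0<\cdots<Z_{r-1}$ (lying in $\ov L$, all missing $t$) and $Z_r<\cdots<Z_k$ (all containing $t$) map under $\ov\g$ to chains whose consecutive terms differ, because within each subchain $Z_i$ and $Z_{i+1}$ have the same intersection with $\{t\}$, so $Z_i\smallsetminus\{t\}\subsetneq Z_{i+1}\smallsetminus\{t\}$. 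Hence $\ov\g(Z_0)<\cdots<\ov\g(Z_{r-1})$ has length $r-1$ and $\ov\g(Z_r)<\cdots<\ov\g(Z_k)$ has length $k-r$, so $\tilde L$ contains a strict chain of length $\max(r-1,k-r)$; but more to the point, the concatenation $\ov\g(Z_0)\le\cdots\le\ov\g(Z_k)$ is a weak chain in $\tilde L$ with at most one "repeat" (between indices $r-1$ and $r$), so its length as a weak chain is at least $k-1$. Therefore $h(\tilde L)\ge k-1=h(\ov L)-1$, i.e. $h(\ov L)\le h(\tilde L)+1$.

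Combining with Corollary \ref{cor:hL} gives $h(\tilde L)\le h(\ov L)\le h(\tilde L)+1$, which is the claim. The one point that needs slight care — and which I expect to be the main obstacle — is verifying the dichotomy "$t\in Z_i$ holds on an up-set of the chain": that is, that once $t$ enters a member of the chain it stays, which is immediate from $Z_i<Z_{i+1}$ (so $Z_i\subseteq Z_{i+1}$), and the accompanying fact that both resulting subchains remain strictly increasing after deleting $t$. The latter is exactly the statement, extracted from Lemma \ref{lem:ovbg}, that $\ov\g$ restricted to $\{Z\in\ov L: t\notin Z\}$ and to $\{Z\in\ov L: t\in Z\}$ is injective — which holds because on the first set $\ov\g(Z)=Z$ and on the second $\ov\g(Z)=Z\smallsetminus\{t\}$ determines $Z$ uniquely as $Z\smallsetminus\{t\}\cup\{t\}$.
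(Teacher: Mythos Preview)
Your proposal is correct and follows essentially the same approach as the paper: take a maximal chain in $\ov L$, apply $\ov\g$ (which deletes $t$), and observe that the resulting chain in $\tilde L$ can lose strictness at most at the single index $r$ where $t$ first appears, so its length is at least $k-1$. Your write-up is in fact more explicit than the paper's, which simply states the displayed chain of inequalities with the single $\le$ at position $r$ and invokes Lemma~\ref{lem:ovbg}.
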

\begin{proof}
Let $\ov \cC=Z_0<\cdots <Z_k$ be a strictly ascending chain in $\ov L$. As
$\ov \g$ is inclusion preserving the sequence $\ov \g(\ov \cC)$ is ascending.
Let $r$ be the least integer such that $t\in Z_i$ for $r\ge i$. Then, 
from Lemma \ref{lem:ovbg},
\[
\ov \g(Z_0)<\cdots <\ov \g(Z_{r-1})\le \ov \g(Z_r)<\ov \g(Z_{r+1})<\cdots
<\ov \g(Z_k),
\]
so $\ov \g(\ov \cC)$ has length at least $k-1$.
\end{proof}

\begin{thm}\label{thm:hL}
$h(\ov L)=h(L)+m$, where $m=0,1$ or $2$.
\end{thm}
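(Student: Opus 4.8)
The theorem follows immediately by composing the two preceding propositions, so the plan is very short. First I would observe that Proposition \ref{prop:hintL} gives $h(\tilde L)=h(L)+m_1$ with $m_1\in\{0,1\}$, and Proposition \ref{prop:hovL} gives $h(\ov L)=h(\tilde L)+m_2$ with $m_2\in\{0,1\}$. Adding these, $h(\ov L)=h(L)+(m_1+m_2)$, and since $m_1+m_2\in\{0,1,2\}$ the claim is proved with $m=m_1+m_2$.

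The only subtlety worth a remark is that each of the two propositions is really two inequalities: the lower bounds $h(L)\le h(\tilde L)\le h(\ov L)$ come from Corollary \ref{cor:hL} (using injectivity of $\tilde\b$ and $\ov\b$), while the upper bounds $h(\tilde L)\le h(L)+1$ and $h(\ov L)\le h(\tilde L)+1$ come from the surjections $\tilde\g$ and $\ov\g$ together with the structural description of when these maps collapse adjacent chain elements (Lemmas \ref{lem:ebg} and \ref{lem:ovbg}). So there is no real obstacle here; the work has all been done in the two lemmas establishing that $\tilde\g$ (respectively $\ov\g$) can identify at most one pair of consecutive terms of any chain. I would also note for completeness that all of $h(L)$, $h(\tilde L)$, $h(\ov L)$ are finite since $X$ is finite, so the arithmetic makes sense and no $\infty$ case arises.

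\begin{proof}
By Proposition \ref{prop:hintL}, $h(\tilde L)=h(L)+m_1$ where $m_1\in\{0,1\}$, and by Proposition \ref{prop:hovL}, $h(\ov L)=h(\tilde L)+m_2$ where $m_2\in\{0,1\}$. Therefore $h(\ov L)=h(L)+m$, where $m=m_1+m_2\in\{0,1,2\}$.
\end{proof}
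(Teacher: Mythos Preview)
Your proposal is correct and matches the paper's approach exactly: the paper states Theorem~\ref{thm:hL} immediately after Propositions~\ref{prop:hintL} and~\ref{prop:hovL} without an explicit proof, treating it as the obvious consequence of combining those two results. Your write-up simply makes that combination explicit.
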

The next two examples show that a difference of one between the heights
of  $L$ and 
$\ov L$ may occur and may 
be due either to a difference in height between $L$ and 
$\tilde L$ or between $\tilde L$ and $\ov L$.
\begin{expl}\label{ex:lheight1}
Let $\G$ be the graph obtained by removing vertex $t$ from the
graph $\ov\G=\G_1$ of Figure \ref{fig:lt2} and let $J_t=\{a, b, c\}$.
Then $h(L)=4$ and $h(\ov L)=5$. In this case the height of the lattice
$\tilde L$ is $5$, with a maximal chain 
\[
X>\cO^X(d)>J_t>\cO^X(f)\cap J_t > \cO^X(f)\cap J_t\cap \cO^X(a)>\emptyset.
\]
\end{expl}
\begin{expl}\label{ex:lheight2}
Let $\G$ be the graph obtained by removing vertex $t$ from the
graph $\ov\G=\G_2$ of Figure \ref{fig:lt2} and again let $J_t=\{a, b, c\}$.
Then $h(L)=5$ and $h(\tilde L)=5$. In this case the maximal chains
in the lattice $L$ involve only the vertices $g,h,i,j,k,l$ and
the sets of $L_t$ involve only vertices $a,b, c$. Therefore the
lattice $\tilde L$ has some new chains of length $5$ but none of
length $6$. However computation shows (see \cite{DKR2}) that $h(\ov L)=6$.
\end{expl}
\begin{figure}
    \psfrag{a}{$a$}
    \psfrag{b}{$b$}
    \psfrag{c}{$c$}
    \psfrag{d}{$d$}
    \psfrag{e}{$e$}
    \psfrag{f}{$f$}
    \psfrag{g}{$g$}
    \psfrag{h}{$h$}
    \psfrag{i}{$i$}
    \psfrag{j}{$j$}
    \psfrag{k}{$k$}
    \psfrag{l}{$l$}
    \psfrag{x}{$t$}
    \mbox
    {
      \parbox[b]{.3\textwidth}
      {
        \centerline{
          \includegraphics[scale=0.4]{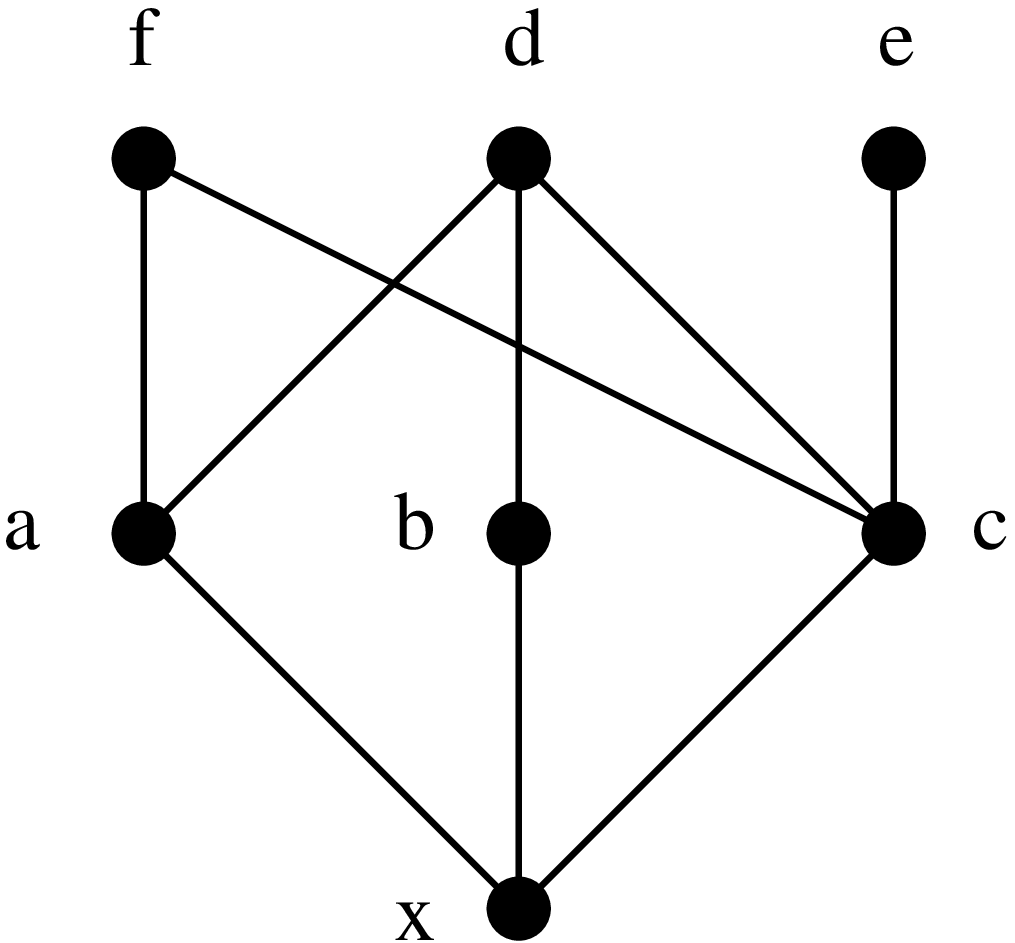} \\[.5em]%
                  }
        \centerline{$\G_1$}
      }
      \qquad
      \parbox[b]{.6\textwidth}
      {
        \centerline{
          \includegraphics[scale=0.4]{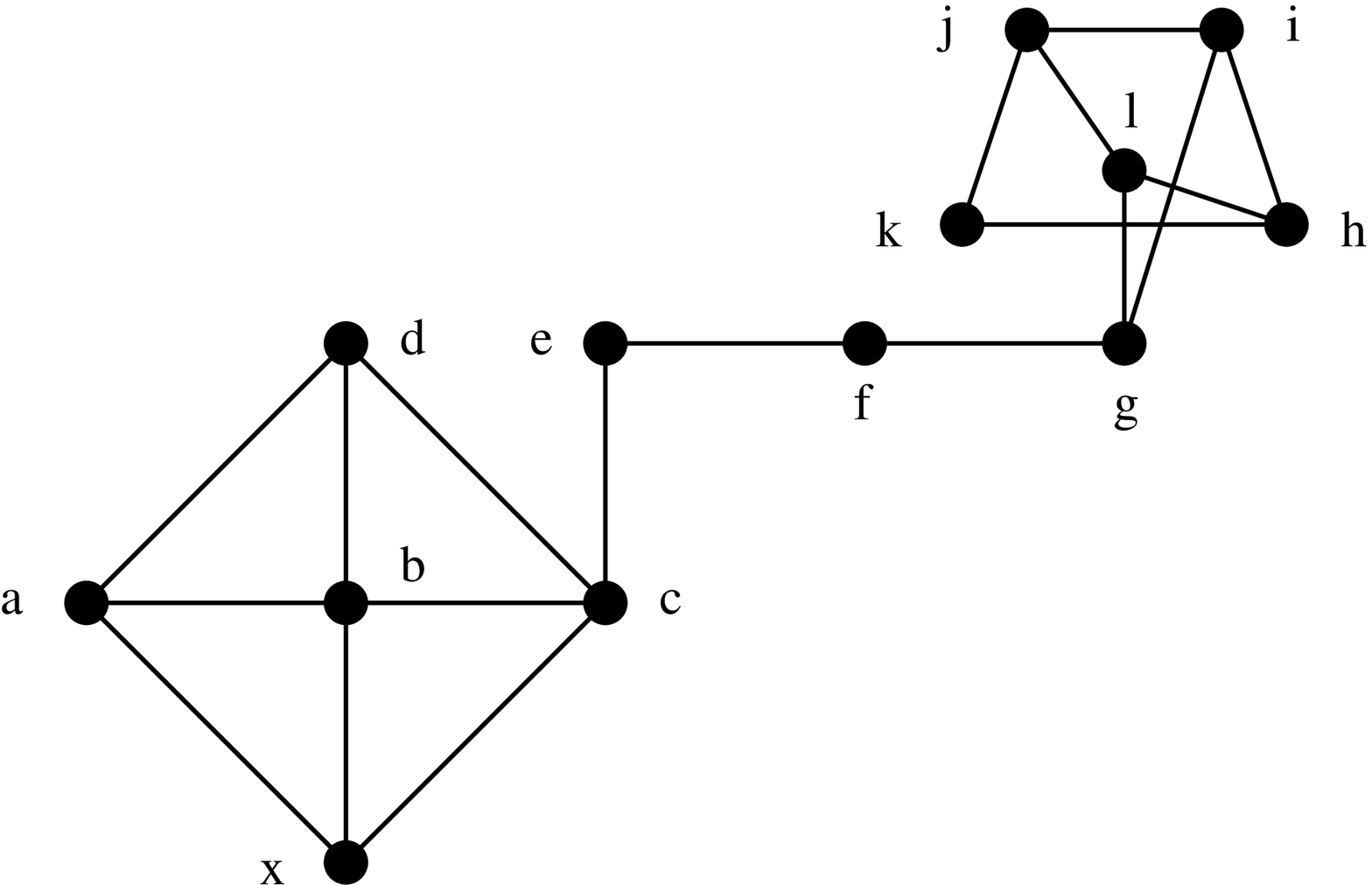}\\[.5em]%
                  }
        \centerline{$\G_2$}
      }
    }
  \caption{Examples \ref{ex:lheight1} and \ref{ex:lheight2}}\label{fig:lt2}
\end{figure}

\subsection{The structure of the extended lattice}\label{sec:SE}

Next we use the results of the Section \ref{sec:av} to describe the
lattice $\ov L$ in terms of the lattice $L$.
We make the following
definition.
Suppose that $L$ is a lattice which is a subset of a
lattice $L^\prime$ and that
the partial ordering in $L$
is the restriction of the partial ordering in $L^\prime$.
Assume that $L$ contains a subset $S$ such that there is
an isomorphism of partially ordered sets, $\rho$,  from $S$ to
$L^\prime \backslash L$.
Then we say
that $L^\prime$ is obtained from $L$ by {\em doubling} $S$ {\em along}
$\rho$.

Recall from Section \ref{sec:av} that if $Z\in \tilde L$ and $Z\nsubseteq J_t$ then
$Z\in L$. This, together with \eqref{eq:L_t}, prompts the following definition.
\begin{defn}
Let
\[
R=\{Z\in \tilde L|Z\nsubseteq J_t \textrm{ and } \cl(Z\cap J_t)=Z\}
\]
and let $\rho$ be the map from $R$ to $\tilde L$ given by
$\rho(Z)=Z\cap J_t$.
\end{defn}
If $Z\in R$ then $Z\in L$ and $Z\notin L_t$, as $Z\nsubseteq J_t$.
Furthermore, from \eqref{eq:L_t},
$\rho(Z)\in L_t\backslash L=\tilde L\backslash L$.
\begin{prop}\label{prop:LtoiL}
$\tilde L$ is obtained from
$L\subseteq \tilde L$ by doubling $R$ along $\rho$.
\end{prop}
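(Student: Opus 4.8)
The plan is to check directly the three conditions in the definition of doubling for the triple $(L,R,\rho)$: that $L$ is a sub-poset of $\tilde L$ under the induced order, that $R\subseteq L$, and that $\rho$ restricts to an isomorphism of partially ordered sets from $R$ onto $\tilde L\backslash L$. The first condition is immediate from the construction, since $\tilde L=L\cup L_t$ and both sets carry the order induced by inclusion. For the second, the point is that every element of $L_t$ is contained in $J_t$; since an element $Z$ of $R$ satisfies $Z\nsubseteq J_t$ by definition, $Z$ cannot lie in $L_t$, so $Z\in L$ and $R\subseteq L$. I would also record at this stage the equality $\tilde L\backslash L=(L\cup L_t)\backslash L=L_t\backslash L$, which identifies the set onto which $\rho$ must map.

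The main step is then to show that $\rho\colon Z\mapsto Z\cap J_t$ is a bijection of $R$ onto $L_t\backslash L$ that preserves and reflects the order. For well-definedness, if $Z\in R$ then $\cl(Z\cap J_t)=Z$, which is strictly larger than $Z\cap J_t$ because $Z\nsubseteq J_t$; so, by \eqref{eq:L_t}, $\rho(Z)=Z\cap J_t\in L_t\backslash L$. Surjectivity is essentially a rereading of \eqref{eq:L_t}: each $Y\in L_t\backslash L$ has the form $Y=Z\cap J_t$ for some $Z\in L$ with $Z\nsubseteq J_t$ and $Z=\cl(Z\cap J_t)$, and such a $Z$ lies in $R$ with $\rho(Z)=Y$. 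Injectivity follows because $Z$ is recovered from its image: if $\rho(Z_1)=\rho(Z_2)$ then $Z_1=\cl(Z_1\cap J_t)=\cl(Z_2\cap J_t)=Z_2$.

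It remains to handle the order. That $\rho$ is inclusion preserving is obvious. For the reverse implication, if $Z_1,Z_2\in R$ with $\rho(Z_1)\subseteq\rho(Z_2)$, I would apply the monotone closure operator (Lemma \ref{lem:cl}.\ref{it:cl4}) and use $Z_i=\cl(Z_i\cap J_t)$ to conclude $Z_1\subseteq Z_2$. Together with the bijection established above this shows $\rho$ is a poset isomorphism onto $\tilde L\backslash L$, which completes the verification. I do not expect a genuine obstacle: the substantive content has already been packaged into \eqref{eq:L_t} and into the fact that $\icl$ retracts the subsets of $X$ onto $\tilde L$. The only place requiring a little care is confirming that $\rho$ lands exactly in $L_t\backslash L$ and hits every such element precisely once, and this is exactly what well-definedness, surjectivity via \eqref{eq:L_t}, and injectivity provide.
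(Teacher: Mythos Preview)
Your proof is correct and follows essentially the same route as the paper: establish that $\rho$ lands in $L_t\backslash L=\tilde L\backslash L$, prove injectivity via $Z=\cl(Z\cap J_t)$, and read surjectivity off from \eqref{eq:L_t}. Your argument is in fact slightly more careful than the paper's, which simply asserts that inclusion preservation plus bijectivity suffices; you correctly supply the missing check that $\rho$ also reflects inclusions (using monotonicity of $\cl$ and the identity $Z_i=\cl(Z_i\cap J_t)$), which is what is actually needed for $\rho$ to be an isomorphism of partially ordered sets rather than merely a bijective homomorphism.
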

\begin{proof}
As $\rho$ clearly preserves inclusion it suffices to show that $\rho$ is
a bijection. If $\rho(Y)=\rho(Z)$, with $Y,Z\in R$ then
$Z=\cl(Z\cap J_t)=\cl(Y\cap J_t)=Y$, so $\rho$ is injective.
From \eqref{eq:L_t} if follows that $\rho$ is also surjective.
\end{proof}

The lattice $\ov L$ is obtained from $\tilde L$ by a doubling on an
appropriate subset of $\tilde L$. To see this we use the following
strengthening of the final part of Lemma \ref{lem:ovbg}.
We remark that  condition \eqref{eq:YandYt} of the lemma can be expressed more succinctly
in terms of complements by noting that
\be
\item $\cO^X(\cO^X(Y)\cap J_t)=\cO^X(\cO^{J_t}(Y))$ and
\item if $Y\subseteq J_t$ then $(\cO^X(\cO^X(Y)\cap J_t))\cap J_t=\cl^{J_t}(Y)
\in \CS(J_t)$.
\ee
\begin{lem}\label{lem:ovLextra}
Let $Y\subset X$. Then $Y$ and $Y\cup \{t\}$ belong to $\ov L$ if and
only if $\cO^X(Y)\nsubseteq J_t$ and
\begin{equation}\label{eq:YandYt}
Y=
\left\{
\begin{array}{ll}
\cO^X(\cO^X(Y)\cap J_t), &\textrm{ if } Y\nsubseteq J_t\\
\cO^X(\cO^X(Y)\cap J_t)\cap J_t, &\textrm{ if } Y\subseteq J_t
\end{array}
\right.
.
\end{equation}
\end{lem}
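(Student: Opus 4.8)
The plan is to use the defining criterion that a subset $Z$ of $\ov X$ lies in $\ov L=\CS(\ov\G)$ precisely when $\cl^{\ov X}(Z)=Z$, and to compute $\cl^{\ov X}(Y)$ and $\cl^{\ov X}(Y\cup\{t\})$ explicitly. The only ingredients needed are: the formulas established inside the proof of Lemma \ref{lem:ovbg}, namely that for any $U\subseteq X$ one has $\cO^{\ov X}(U)=\cO^X(U)$ if $U\nsubseteq J_t$ and $\cO^{\ov X}(U)=\cO^X(U)\cup\{t\}$ if $U\subseteq J_t$ (this follows by the one-line argument given there); the identity $\cO^{\ov X}(\{t\})=J_t\cup\{t\}$; the rule $\cO^{\ov X}(Y\cup\{t\})=\cO^{\ov X}(Y)\cap\cO^{\ov X}(\{t\})$ (Lemma \ref{lem:orth}.\ref{it:orth5} applied in $\ov\G$); monotonicity of $\cO^X$ (Lemma \ref{lem:orth}.\ref{it:orth3}); and $W\subseteq\cl^{\ov X}(W)$ (Lemma \ref{lem:cl}.\ref{it:cl1} applied in $\ov\G$). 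Throughout, $t\notin Y$ since $Y\subseteq X$, and $t\notin\cO^X(\cdot)$.

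First I would compute $\cl^{\ov X}(Y\cup\{t\})$, setting $U:=\cO^X(Y)\cap J_t\subseteq J_t$. Using $\cO^{\ov X}(Y\cup\{t\})=\cO^{\ov X}(Y)\cap(J_t\cup\{t\})$ and splitting according to whether $Y\subseteq J_t$, a direct calculation yields
\[
\cl^{\ov X}(Y\cup\{t\})=
\begin{cases}
\cO^X(U)\cup\{t\}, & Y\nsubseteq J_t,\\
(\cO^X(U)\cap J_t)\cup\{t\}, & Y\subseteq J_t.
\end{cases}
\]
Since $\cl^{\ov X}(Y\cup\{t\})\supseteq Y\cup\{t\}$ always, and since $Y\subseteq\cl^X(Y)=\cO^X(\cO^X(Y))\subseteq\cO^X(U)$ by monotonicity (hence also $Y\subseteq\cO^X(U)\cap J_t$ when $Y\subseteq J_t$), equality $\cl^{\ov X}(Y\cup\{t\})=Y\cup\{t\}$ holds if and only if $\cO^X(U)=Y$ (when $Y\nsubseteq J_t$) or $\cO^X(U)\cap J_t=Y$ (when $Y\subseteq J_t$); that is, if and only if \eqref{eq:YandYt} holds. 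Note this equivalence does not use the hypothesis $\cO^X(Y)\nsubseteq J_t$.

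Next I would compute $\cl^{\ov X}(Y)$, now splitting on both $Y\subseteq J_t$ and $\cO^X(Y)\subseteq J_t$. When $\cO^X(Y)\subseteq J_t$ the outer $\cO^{\ov X}$ reintroduces $t$, giving $\cl^{\ov X}(Y)=\cl^X(Y)\cup\{t\}$ (if $Y\nsubseteq J_t$) or $\cl^{\ov X}(Y)=(\cl^X(Y)\cap J_t)\cup\{t\}$ (if $Y\subseteq J_t$); in either case $t\in\cl^{\ov X}(Y)$, so $Y\notin\ov L$. When $\cO^X(Y)\nsubseteq J_t$, one gets instead $\cl^{\ov X}(Y)=\cl^X(Y)$ if $Y\nsubseteq J_t$ and $\cl^{\ov X}(Y)=\cl^X(Y)\cap J_t$ if $Y\subseteq J_t$; assuming \eqref{eq:YandYt}, in the first case $Y=\cO^X(U)$ is an orthogonal complement, hence $\cl^X$-closed (Lemma \ref{lem:cl}.\ref{it:cl2}), while in the second case $\cl^X(Y)\cap J_t\subseteq\cO^X(U)\cap J_t=Y$ by monotonicity together with \eqref{eq:YandYt}, the reverse inclusion $Y\subseteq\cl^X(Y)\cap J_t$ being formal; so $\cl^{\ov X}(Y)=Y$ and $Y\in\ov L$.

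Combining the two computations: if $\cO^X(Y)\nsubseteq J_t$ and \eqref{eq:YandYt} holds, then the third step gives $Y\in\ov L$ and the second gives $Y\cup\{t\}\in\ov L$; conversely, if both $Y$ and $Y\cup\{t\}$ lie in $\ov L$, then \eqref{eq:YandYt} holds by the second step, and $\cO^X(Y)\nsubseteq J_t$ since $\cO^X(Y)\subseteq J_t$ would force $Y\notin\ov L$. I expect the only real difficulty to be bookkeeping: keeping the four cases ($Y$ inside or outside $J_t$, crossed with $\cO^X(Y)$ inside or outside $J_t$) straight and tracking exactly when $t$ enters or leaves each orthogonal complement; the one mildly delicate point is the identity $\cl^X(Y)\cap J_t=Y$ in the case $Y\subseteq J_t$, $\cO^X(Y)\nsubseteq J_t$, where one inclusion is formal and the other is precisely the content of \eqref{eq:YandYt}.
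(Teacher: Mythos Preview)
Your proof is correct and follows essentially the same computational approach as the paper: both compute $\cl^{\ov X}(Y\cup\{t\})$ explicitly to see it equals $Y\cup\{t\}$ exactly when \eqref{eq:YandYt} holds, and both observe that $\cO^X(Y)\subseteq J_t$ forces $t\in\cl^{\ov X}(Y)$. The one minor difference is in the forward direction for $Y\in\ov L$: the paper, having already shown $Y\cup\{t\}=\cO^{\ov X}(Z)$ for some $Z$, picks $x\in\cO^X(Y)\setminus J_t$ and exhibits $Y=\cO^{\ov X}(Z\cup\{x\})$, whereas you compute $\cl^{\ov X}(Y)$ directly in the remaining cases and check $\cl^X(Y)\cap J_t=Y$ when $Y\subseteq J_t$; both arguments are routine.
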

\begin{proof}
Suppose that
$\cO^X(Y)\nsubseteq J_t$. 
If $Y\nsubseteq J_t$ then
\begin{align}\label{al:YninJ}
\cl^{\ov X}(Y\cup \{t\})
&=\cO^{\ov X}( \cO^X(Y)\cap J_t)\notag\\
&=\cO^{X}( \cO^X(Y)\cap J_t)\cup \{t\}.
\end{align}

If, on the other hand,  $Y\subseteq J_t$ then
$\cO^{\ov X}(Y\cup \{t\})=(\cO^X(Y)\cap J_t)\cup \{t\}$ so
\begin{align}\label{al:YinJ}
\cl^{\ov X}(Y\cup \{t\})
&= \cO^{\ov X}\left((\cO^X(Y)\cap J_t)\cup \{t\} \right)\notag\\
&=\left(\cO^{X}( \cO^X(Y)\cap J_t)\cup \{t\}\right)\cap (J_t\cup \{t\})\notag\\
&=\left(\cO^{X}( \cO^X(Y)\cap J_t)\cap J_t\right)\cup \{t\}.
\end{align}
In both cases, if in addition \eqref{eq:YandYt} holds then
$\cl^{\ov X}(Y\cup \{t\})=Y\cup \{t\}$ and
$Y\cup \{t\}\in \ov L$.

Now, given that $\cO^X(Y)\nsubseteq J_t$ and \eqref{eq:YandYt} holds, choose
$x\in \cO^X(Y)$ such that $x\notin J_t$. Then
$\cO^{\ov X}(x)=\cO^X(x)\supseteq \cl^X(Y)\supseteq Y$
and $t\notin \cO^{\ov X}(x)$.
From the above $Y\cup \{t\}\in \ov L$, so $Y\cup \{t\}=\cO^{\ov X}(Z)$,
for some $Z\in \ov L$. Then $\cO^{\ov X}(Z\cup \{x\})=
\cO^{\ov X}(Z)\cap \cO^{\ov X}(x)=Y$; and $Y\in \ov L$.

Conversely suppose that $Y$ and $Y\cup \{t\}$ belong to $\ov L$.
In this case if  $\cO^X(Y)\subseteq J_t$ then
$\cO^{\ov X}(Y)\subseteq \cO^X(Y)\cup \{t\}$
so $\cl^{\ov X}(Y)\supseteq
\cO^{\ov X}( \cO^X(Y))\cap (J_t \cup\{t\})$.
Thus $t\in \cl^{\ov X}(Y)$ and
$Y\notin \ov L$, a contradiction. Thus $\cO^X(Y)\nsubseteq J_t$.
If $Y\nsubseteq J_t$ then, from \eqref{al:YninJ},
$Y=\cl^{\ov X}(Y\cup \{t\})\backslash \{t\}
=\cO^X (\cO^X(Y)\cap J_t)$.
If, on the other hand, $Y\subseteq J_t$ then
\eqref{al:YinJ} implies that
$Y=(\cO^{X}( \cO^X(Y)\cap J_t))\cap J_t$, as claimed.
\end{proof}
The lemma prompts the following definition.
\begin{defn}\label{def:extset}
Let
\[
S_1=\{Y\subset X| Y\nsubseteq J_t, \cO^X(Y)\nsubseteq J_t, \textrm{ and }
Y=\cO^X (\cO^X(Y)\cap J_t)\}
\]
and
\[
S_2=\{Y\subset X| Y\subseteq J_t, \cO^X(Y)\nsubseteq J_t, \textrm{ and }
Y=(\cO^{X}( \cO^X(Y)\cap J_t))\cap J_t\}.
\]
Let $S=S_1\cup S_2$ and let $T=\{Y\cup \{t\}|Y\in S\}$.
Let $\s$ be the map from $S$ to
$T$ given by $\s(Y)=Y\cup \{t\}$.
\end{defn}
From Lemma \ref{lem:ovLextra} it follows that $S\cup T\subseteq \ov L$ 
and by definition $S\subseteq \tilde L$. Moreover, from
Lemma \ref{lem:ovbg},  $\ov \b(Y)=Y$, for 
all $Y\in S$, so $S=\ov \b(S)\subseteq \ov \b(\tilde L)\subseteq \ov L$.
\begin{prop}\label{prop:ovLdouble}
The lattice $\ov L$ is obtained from $\ov \b(\tilde L)\subseteq \ov L$ by
doubling $S$ along $\s$.
\end{prop}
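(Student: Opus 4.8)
The plan is to verify the three defining ingredients of a doubling: first that $\ov L = \ov\b(\tilde L) \sqcup (\ov L \setminus \ov\b(\tilde L))$ as posets with the induced ordering, second that $\s: S \maps T$ is an isomorphism of partially ordered sets, and third that $T = \ov L \setminus \ov\b(\tilde L)$. The first point follows from Lemma \ref{lem:ovbg}: $\ov\b$ is an injective homomorphism of partially ordered sets with $\ov\g\ov\b = \id_{\tilde L}$, so $\ov\b(\tilde L)$ is a sublattice of $\ov L$ whose order is the restriction of the order of $\ov L$, which is exactly the hypothesis needed in the definition of doubling. By \eqref{eq:ovb}, $\ov\b(Y) = Y$ when $\cO^X(Y) \nsubseteq J_t$ and $\ov\b(Y) = Y \cup \{t\}$ when $\cO^X(Y) \subseteq J_t$; in particular $\ov\b(\tilde L)$ consists of those $Y$ and those $Y \cup \{t\}$ appropriately, and as noted just before the proposition, $\ov\b(Y) = Y$ for all $Y \in S$, so $S \subseteq \ov\b(\tilde L) \subseteq \ov L$.

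Next I would show $\s$ is a poset isomorphism onto $T$. Since $\s(Y) = Y \cup \{t\}$, it is manifestly a bijection $S \maps T$ preserving and reflecting inclusion (as $t \notin X \supseteq S$), so this step is immediate once we know $S$ and $T$ are the right sets. The substantive step is identifying $T$ with $\ov L \setminus \ov\b(\tilde L)$. For the inclusion $T \subseteq \ov L \setminus \ov\b(\tilde L)$: by Lemma \ref{lem:ovLextra}, for $Y \in S$ both $Y$ and $Y \cup \{t\}$ lie in $\ov L$, so $T \subseteq \ov L$; and if some $Y \cup \{t\} \in T$ were equal to $\ov\b(W)$ for $W \in \tilde L$, then applying $\ov\g$ gives $W = \ov\g(\ov\b(W)) = \ov\g(Y \cup \{t\}) = (Y \cup \{t\}) \setminus \{t\} = Y$, using the formula $\ov\g(Z) = Z \setminus \{t\}$ from Lemma \ref{lem:ovbg}; but $\ov\b(Y) = Y$ (as $Y \in S$), not $Y \cup \{t\}$, a contradiction. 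Hence $T \cap \ov\b(\tilde L) = \nul$.

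For the reverse inclusion $\ov L \setminus \ov\b(\tilde L) \subseteq T$, take $Z \in \ov L$ with $Z \notin \ov\b(\tilde L)$. Set $Y = Z \setminus \{t\} = \ov\g(Z) \in \tilde L$. I claim $t \in Z$: otherwise $Z = Y \in \tilde L$ and $\ov\b(Y) = Y = Z$ (since $Z = \cl^{\ov X}(Z)$ forces $\ov\b(Y)=\cl^{\ov X}(Y) = Z$ when $t\notin Z$, which by \eqref{eq:ovb} means $\cO^X(Y) \nsubseteq J_t$), putting $Z \in \ov\b(\tilde L)$, a contradiction. So $Z = Y \cup \{t\}$ with $t \in Z \in \ov L$. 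Now $Y = Z \setminus \{t\} \in \ov L$ would again place $Z \in \ov\b(\tilde L)$ via $\ov\b(Y) = Y \cup \{t\} = Z$ (this is the case $\cO^X(Y) \subseteq J_t$ of \eqref{eq:ovb}); to avoid this we must still confirm both $Y, Z \in \ov L$ — but $Z \in \ov L$ by assumption and the point is precisely that $Y$ need not be closed. The clean route is: since $Z = Y \cup\{t\} \in \ov L$ we have $Z = \cl^{\ov X}(Z) \supseteq \cl^{\ov X}(Y\cup\{t\})\supseteq Y \cup \{t\} = Z$, so $\cl^{\ov X}(Y \cup \{t\}) = Y \cup \{t\}$; feeding this through the computations \eqref{al:YninJ} and \eqref{al:YinJ} in the proof of Lemma \ref{lem:ovLextra} (according to whether $Y \nsubseteq J_t$ or $Y \subseteq J_t$) yields both $\cO^X(Y) \nsubseteq J_t$ and the equation \eqref{eq:YandYt}, which says exactly $Y \in S_1$ or $Y \in S_2$, i.e. $Y \in S$ and $Z = \s(Y) \in T$.

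The main obstacle is the last paragraph: disentangling, from the single hypothesis $Z = Y \cup \{t\} \in \ov L \setminus \ov\b(\tilde L)$, that $Y$ satisfies the precise fixed-point equation \eqref{eq:YandYt} and the condition $\cO^X(Y) \nsubseteq J_t$. This is really a matter of running the closure computations of Lemma \ref{lem:ovLextra} in reverse, and the bookkeeping of the two cases $Y \subseteq J_t$ versus $Y \nsubseteq J_t$ (and the ``converse'' direction already proved in that lemma) does most of the work; once $S$ and $T$ are correctly identified, the doubling structure is formal.
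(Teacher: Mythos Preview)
Your overall structure matches the paper's: show $T\cap\ov\b(\tilde L)=\emptyset$, then show every $Z\in\ov L\setminus\ov\b(\tilde L)$ lies in $T$, and note $\s$ is an order isomorphism. The first inclusion and the claim $t\in Z$ are handled correctly.

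There is, however, a genuine gap in your reverse inclusion. After establishing $t\in Z$ and setting $Y=Z\setminus\{t\}=\ov\g(Z)\in\tilde L$, your ``clean route'' proposes to feed $\cl^{\ov X}(Y\cup\{t\})=Y\cup\{t\}$ through \eqref{al:YninJ} and \eqref{al:YinJ} to extract $\cO^X(Y)\nsubseteq J_t$ and \eqref{eq:YandYt}. But both \eqref{al:YninJ} and \eqref{al:YinJ} are derived in the paper \emph{under the standing hypothesis} $\cO^X(Y)\nsubseteq J_t$, so you cannot use them to conclude that hypothesis. Likewise the converse direction of Lemma~\ref{lem:ovLextra} assumes $Y\in\ov L$, which you have not established; indeed, knowing only $Y\cup\{t\}\in\ov L$ is not enough, since there exist such $Y$ with $\cO^X(Y)\subseteq J_t$ (these are precisely the cases $Z\in\ov\b(\tilde L)$ that you must rule out).

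The missing step is exactly what the paper does, and your confused middle paragraph nearly reaches it: formula \eqref{eq:ovb} holds for \emph{all} $Y\in\tilde L$, not just for $Y\in\ov L$. Since $Y\in\tilde L$ and $Z\notin\ov\b(\tilde L)$, we have $\ov\b(Y)\neq Z=Y\cup\{t\}$; the dichotomy \eqref{eq:ovb} then forces $\ov\b(Y)=Y$, which simultaneously gives $Y\in\ov L$ (as $\ov\b$ lands in $\ov L$) and $\cO^X(Y)\nsubseteq J_t$. Now both $Y$ and $Y\cup\{t\}$ lie in $\ov L$, so Lemma~\ref{lem:ovLextra} yields $Y\in S$ and $Z\in T$. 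Once you make this one-line correction, your argument coincides with the paper's.
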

\begin{proof} Using Lemma \ref{lem:ovbg}, 
if $Y\in S$ and $Y\cup \{t\}=\ov \b(U)$, for some element $U\in \tilde L$, then
$t\in \ov \b(U)$ implies that $\cO^X(U)\in J_t$. However $Y\cup \{t\}=\ov \b(U)
=U\cup \{t\}$ so $U=Y$ and $\cO^X(Y)\nsubseteq J_t$, a contradiction. Hence
no element of $T$ belongs
to the image of $\ov \b$.
If $Z\in \ov L$ and $Z$ is not in the image of $\ov \b$
then, from Lemma \ref{lem:ovbg} again,  $\ov \g(Z)=Z\backslash \{t\}\in \tilde L$ and
so $\ov \b(Z\backslash \{t\}) \neq Z$.  
Thus either $t\notin Z$ and $\ov \b(Z\backslash \{t\})=\ov \b(Z)=Z\cup \{t\}$ 
or $t\in Z$ and $\ov \b(Z\backslash \{t\})=Z\backslash \{t\}$. In the former
case $Z\in \ov L$ and $\ov \b(Z)=Z\cup \{t\}\in \ov L$ so $Z\in S$ and 
$Z\cup \{t\}\in T\cap \operatorname{Im}(\ov \b)$, 
a contradiction. 
Hence $\ov \b(Z\backslash \{t\})=
Z\backslash \{t\}\in \ov L$ and  $t\in Z$. It follows from Lemma \ref{lem:ovLextra}
that $Z\backslash \{t\}\in S$ so $Z\in T$. That is, $T=
\ov L\backslash \ov \b(\tilde L)$. As $\s$ is an
inclusion preserving bijection the result follows.
\end{proof}
\subsection{Extension along the complement of a simplex}\label{subs:simp}
In those cases where $\g$ is injective it follows, from Corollary \ref{cor:gb}, that
$\g$ is a bijection and so an isomorphism of lattices.  We now consider
under which conditions this may occur.
Let $V=\cO^{\ov X}(t)=J_t\cup \{t\}\in \ov L$.
If $\g$ is injective
then $V=\b\g(V)=\cl^X(J_t)\cup \{t\}$, so $J_t=\cl^X(J_t)\in L$. Therefore
$J_t\in L$ is a necessary condition for $\g$ to be injective. We shall
show, in Section \ref{sec:EC}, that if $J_t$ is closed then $h(L)=h(\ov L)$;
but
we shall also see in 
Lemma  \ref{lem:ginj} that a further condition is required to ensure that
$\g$ is injective. 
First however we establish a simple form for $\g$ when $J_t$ is
closed.
\begin{lem}\label{lem:gsimple}
If $J_t\in L$ then $\tilde L=L$ and $\g(Z)=Z\backslash\{t\}$, for all $Z\in {\ov L}$.
Moreover, in the notation of {\rm Definition \ref{def:extset}}, $S_1=\emptyset$ so $\ov L$
is obtained from $\b(L)\subseteq \ov L$ by doubling $S_2$ along $\s$.
\end{lem}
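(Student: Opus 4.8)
The plan is to dispatch the three assertions in turn. All of them come down to the single structural fact (Lemma~\ref{lem:cs}) that the intersection of two closed sets is closed, combined with the factorisations $\b=\ov\b\tilde\b$ and $\g=\tilde\g\ov\g$ of Corollary~\ref{cor:gb} and the explicit formulas of Lemmas~\ref{lem:ebg} and \ref{lem:ovbg}.

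First I would show $\tilde L=L$. Since $\tilde L=L\cup L_t$ it is enough to prove $L_t\subseteq L$. If $Y\in L_t$ then $Y=C\cap J_t$ for some $C\in L$; as $J_t\in L$ by hypothesis, Lemma~\ref{lem:cs} gives $C\cap J_t\in L$, i.e.\ $Y\in L$. (Alternatively one can read this off \eqref{eq:L_t}: if $Z\in L$ with $Z=\cl(Z\cap J_t)$ then, $Z\cap J_t$ being closed, $Z=Z\cap J_t\subseteq J_t$, so $L_t\backslash L=\emptyset$.) Next, for $Z\in\ov L$, Lemma~\ref{lem:ovbg} gives $\ov\g(Z)=Z\backslash\{t\}$ with $\ov\g(Z)\in\tilde L=L$; hence $Z\backslash\{t\}$ is closed, so $\tilde\g(\ov\g(Z))=\cl^X(Z\backslash\{t\})=Z\backslash\{t\}$, and therefore $\g(Z)=\tilde\g\ov\g(Z)=Z\backslash\{t\}$ by Corollary~\ref{cor:gb}. (This can equally be read straight off the definition $\g(Z)=\cl^X(X\cap Z)$ once $X\cap Z=Z\backslash\{t\}$ is known to be closed.)

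For $S_1=\emptyset$, suppose $Y\in S_1$. Then $Y=\cO^X(\cO^X(Y)\cap J_t)$; applying $\cO^X$ and using $\cl^X=\cO^X\cO^X$ gives $\cO^X(Y)=\cl^X(\cO^X(Y)\cap J_t)$. Now $\cO^X(Y)\in\CS(\G)$ by the very definition of $\CS(\G)$, and $J_t\in\CS(\G)$, so $\cO^X(Y)\cap J_t$ is closed by Lemma~\ref{lem:cs}, whence $\cl^X(\cO^X(Y)\cap J_t)=\cO^X(Y)\cap J_t$ and therefore $\cO^X(Y)=\cO^X(Y)\cap J_t\subseteq J_t$. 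This contradicts the condition $\cO^X(Y)\nsubseteq J_t$ built into the definition of $S_1$, so $S_1=\emptyset$. Finally, since $\tilde L=L$ the inclusion $\tilde\b$ is $\id_L$, so $\b=\ov\b\tilde\b=\ov\b$ and $\ov\b(\tilde L)=\b(L)$; as $S=S_1\cup S_2=S_2$, Proposition~\ref{prop:ovLdouble} asserts precisely that $\ov L$ is obtained from $\b(L)$ by doubling $S_2$ along $\s$, completing the proof.

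The only step that is more than bookkeeping is $S_1=\emptyset$, and the key manoeuvre there is to apply the orthogonal‑complement operator to the defining equation of $S_1$ so that closedness of $\cO^X(Y)\cap J_t$ (which rests on $J_t$ being closed) can be invoked; everything else is immediate from results already established in Section~\ref{sec:av}.
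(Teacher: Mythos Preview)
Your proof is correct and follows essentially the same approach as the paper: both deduce $L_t\subseteq L$ from the fact that $J_t\in L$ makes every $C\cap J_t$ closed, then use $\tilde L=L$ to identify $\g$ with $\ov\g$ (and $\b$ with $\ov\b$), and finally for $S_1=\emptyset$ both observe that $W=\cO^X(Y)\cap J_t$ is closed, whence $\cO^X(Y)=\cl^X(W)=W\subseteq J_t$, a contradiction. Your write-up is simply more explicit in citing the supporting lemmas and in invoking Proposition~\ref{prop:ovLdouble} for the final doubling statement.
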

\begin{proof}
If $J_t\in L$ then $L_t$ is a subset of $L$, so $\tilde L=L$, as claimed.
In this case $\g=\ov \g$ and $\b=\ov \b$, so the first statement of the
Lemma follows from Lemma \ref{lem:ovbg}. If $Y\in S_1$ then $Y\in L$ and
$Y=\cO^X(W)$, where $W=\cO^X(Y)\cap J_t\in L$. However this means
$\cO^X(Y)=W\subseteq J_t$, a contradiction.
\end{proof}
\begin{lem}\label{lem:ginj}
The map $\g$ is an isomorphism of lattices if and only if
$J_t=\cO^X(S)$, where $S$ is a simplex of $\G$.
\end{lem}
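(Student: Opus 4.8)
The plan is to reduce the assertion to the condition $S_2=\emptyset$ from Definition \ref{def:extset}, and then to recognise $S_2=\emptyset$ as the simplex condition. By Corollary \ref{cor:gb} we have $\g\b=\id_L$ with $\b$ injective and $\g$ surjective, so $\g$ is a lattice isomorphism if and only if $\b$ is surjective. By the discussion preceding the statement, if $\g$ is an isomorphism (equivalently, if $\b$ is surjective) then $J_t=\cl^X(J_t)\in L$, so I may assume throughout that $J_t\in L$. Under this assumption Lemma \ref{lem:gsimple} gives $S_1=\emptyset$ and states that $\ov L$ is obtained from $\b(L)$ by doubling $S_2$ along $\s$; as $\s$ is a bijection from $S_2$ onto $\ov L\backslash\b(L)$, the map $\b$ is surjective precisely when $S_2=\emptyset$. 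Finally, using the identity $\cO^X(\cO^X(Y)\cap J_t)\cap J_t=\cl^{J_t}(Y)$ (valid for $Y\subseteq J_t$, recorded before Lemma \ref{lem:ovLextra}), the defining conditions of $S_2$ simplify to $S_2=\{Y\in\CS(J_t)\mid\cO^X(Y)\nsubseteq J_t\}$. Thus the lemma reduces to the claim: assuming $J_t\in L$, every $Y\in\CS(J_t)$ satisfies $\cO^X(Y)\subseteq J_t$ if and only if $J_t=\cO^X(S)$ for some simplex $S$ of $\G$.

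For the direction starting from a simplex, suppose $J_t=\cO^X(S)$ with $S$ a simplex. Then $J_t\in L$, and $S\subseteq\cO^X(S)=J_t$ by Lemma \ref{lem:orth}.\ref{it:orth6}. Given $Y\in\CS(J_t)$, put $W=\cO^{J_t}(Y)$, so that $Y=\cO^{J_t}(W)=\cO^X(W)\cap J_t$. Since $W\subseteq J_t=\cO^X(S)$, every vertex of $W$ is orthogonal to every vertex of $S$, so $S\subseteq\cO^X(W)$; combining this with $S\subseteq J_t$ yields $S\subseteq\cO^X(W)\cap J_t=Y$, whence $\cO^X(Y)\subseteq\cO^X(S)=J_t$. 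Thus $S_2=\emptyset$ and $\g$ is an isomorphism. For the converse, assume $J_t\in L$ and $\cO^X(Y)\subseteq J_t$ for every $Y\in\CS(J_t)$, and set $S=\cO^X(J_t)$. As $J_t$ is closed, $\cO^X(S)=\cO^X(\cO^X(J_t))=\cl(J_t)=J_t$, so $J_t=\cO^X(S)$. To prove that $S$ is a simplex it suffices, by Lemma \ref{lem:orth}.\ref{it:orth6}, to show $S\subseteq\cO^X(S)=J_t$; so I apply the hypothesis to $Y_0=\cO^{J_t}(J_t)$, which is closed in $\G_{J_t}$ (that is, $Y_0\in\CS(J_t)$) by Lemma \ref{lem:orth}.\ref{it:orth2}. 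Since $Y_0=\cO^X(J_t)\cap J_t\subseteq J_t$, the inclusion-reversing property of $\cO^X$ gives $S=\cO^X(J_t)\subseteq\cO^X(Y_0)\subseteq J_t$, as required.

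The real content sits in the second paragraph: on the one hand every closed subset of $\G_{J_t}$ turns out to contain $S$, and on the other the hypothesis forces the closed set $\cO^X(J_t)$ to be a simplex. Both are one-line deductions from Lemma \ref{lem:orth} once the reductions are in place, so I expect the main work to be organisational --- threading together the equivalences between $\g$ being an isomorphism, $\b$ being surjective, $\ov L\backslash\b(L)$ being empty, and $S_2$ being empty (via Corollary \ref{cor:gb}, Lemma \ref{lem:gsimple} and the remark before Lemma \ref{lem:ovLextra}), together with the rewriting of $S_2$ as a subset of $\CS(J_t)$. I do not foresee a genuine obstacle beyond keeping these reductions straight.
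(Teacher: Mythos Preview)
Your proof is correct and essentially parallels the paper's argument. Both proofs handle the forward direction identically: assuming $J_t=\cO^X(S)$ with $S$ a simplex, one shows $S\subseteq Y$ for every $Y\in\CS(J_t)$, forcing $\cO^X(Y)\subseteq J_t$ and hence $S_2=\emptyset$. For the converse the paper argues the contrapositive directly---if $J_t=\cO^X(N)$ with $N$ not a simplex, it picks $s\in N\setminus J_t$ to show that both $J_t$ and $J_t\cup\{t\}$ lie in $\ov L$, giving an explicit failure of injectivity (and remarks that this amounts to $J_t\in S_2$)---whereas you stay within the $S_2=\emptyset$ framework and deduce $\cO^X(J_t)\subseteq J_t$ from the hypothesis applied to a particular $Y_0\in\CS(J_t)$. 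These are dual versions of the same observation; indeed your argument becomes one line if you take $Y_0=J_t$ itself (the maximal element of $\CS(J_t)$), since then the hypothesis immediately gives $S=\cO^X(J_t)\subseteq J_t$.
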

\begin{proof}
First assume that  $J_t=\cO^X(A)$, where $A\subseteq X$ is a simplex.
In this case, in the notation of Definition \ref{def:extset},
$Y\in S_2$ implies $Y\in \CS(J_t)$, so $Y=\cO^{J_t}(W)$, for some
$W\subseteq J_t$. Now $W\subseteq J_t=\cO^X(A)=\cO^{J_t}(A)$
which implies $\cO^{J_t}(\cO^{J_t}(A))\subseteq \cO^{J_t}(W)=Y$.
As $A$ is a simplex $A\subseteq J_t$ so $A\subseteq \cO^{J_t}(\cO^{J_t}((A))$ and thus
$\cO^X(Y)\subseteq J_t$, contrary to the definition of $S_2$. Therefore
$S_1=S_2=\emptyset$ and from Lemma \ref{lem:gsimple} $L=\ov L$.

On the other hand suppose that $J_t=\cO^X(N)$, where $N$ is not a simplex.
Then, from Lemma \ref{lem:orth}.\ref{it:orth6}, there is $s\in N$ such that
$s\notin J_t$. Therefore $t\notin \cO^{\ov X}(N)$ and we have $J_t=\cO^{\ov X}(N)
\in \ov L$. Hence $\g(J_t)=J_t=\g(J_t\cup \{t\})$ and $\g$ is not injective.
From the remarks at the beginning of the Section it follows that if $J_t$ is
not the orthogonal complement of a simplex in $X$ then $\g$ is not injective.
(It is not difficult to see that in this case $J_t\in S_2$.)
\end{proof}
As a consequence of this lemma we obtain the following theorem.
\begin{thm}\label{thm:ginj}
The lattices $L$ and $\ov L$ are isomorphic if and only if
$J_t=\cO^X(S)$, where $S\subset X$ is a simplex,  in which case $\g$ is an isomorphism.
\end{thm}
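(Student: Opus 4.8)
The plan is to reduce Theorem \ref{thm:ginj} almost entirely to the technical Lemma \ref{lem:ginj} together with Corollary \ref{cor:gb}. Recall from Corollary \ref{cor:gb} that $\g\b=\id_L$, so $\b$ is always injective and $\g$ always surjective; hence $\g$ is an isomorphism of lattices precisely when $\g$ is injective, and in that case $\b$ and $\g$ are mutually inverse order-isomorphisms, which (being bijective homomorphisms of partially ordered sets between lattices) are automatically lattice isomorphisms. So the real content is: $L\cong\ov L$ iff $\g$ is injective.

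The main step is therefore to show that $L$ and $\ov L$ are isomorphic as lattices if and only if $\g$ is injective. One direction is immediate from the previous paragraph: if $\g$ is injective it is an isomorphism, so certainly $L\cong\ov L$. For the converse I would argue as follows. Suppose $L\cong\ov L$; I want to conclude $\g$ is injective, equivalently (by Lemma \ref{lem:ginj}) that $J_t=\cO^X(S)$ for some simplex $S$. The cleanest route is via heights and the doubling description: by Proposition \ref{prop:ovLdouble}, $\ov L$ is obtained from $\ov\b(\tilde L)\cong\tilde L$ by doubling the set $S$ of Definition \ref{def:extset} along $\s$, so $|\ov L|=|\tilde L|+|S|$; and by Proposition \ref{prop:LtoiL}, $|\tilde L|=|L|+|R|$. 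Hence $|\ov L|=|L|+|R|+|S|$, and since $L$ embeds in $\ov L$ via $\b$, we have $L\cong\ov L$ (as finite lattices, via \emph{some} isomorphism) forcing $|L|=|\ov L|$, hence $R=S=\emptyset$. Now $S=\emptyset$ means in particular $S_2=\emptyset$, and I would then show directly that $S_2=\emptyset$ implies $J_t$ is the orthogonal complement of a simplex: indeed, by the final parenthetical remark in the proof of Lemma \ref{lem:ginj}, if $J_t$ were \emph{not} $\cO^X(A)$ for any simplex $A$, then $J_t\in S_2$, contradicting $S_2=\emptyset$. (Alternatively, invoke directly: $\g$ injective $\iff$ $T=\emptyset$ $\iff$ $S=\emptyset$, by Proposition \ref{prop:ovLdouble}, since $T=\ov L\setminus\ov\b(\tilde L)$ and $\g=\tilde\g\ov\g$; combined with $\tilde L=L$, which follows since $R=\emptyset$ forces $L_t\subseteq L$.) Then Lemma \ref{lem:ginj} gives the equivalence with $J_t=\cO^X(S)$, $S$ a simplex.

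Assembling: $J_t=\cO^X(S)$ with $S$ a simplex $\iff$ $\g$ is injective (Lemma \ref{lem:ginj}) $\iff$ $\g$ is an isomorphism of lattices (Corollary \ref{cor:gb}, as above) $\implies$ $L\cong\ov L$; and conversely $L\cong\ov L$ $\implies$ $|L|=|\ov L|$ $\implies$ $R=S=\emptyset$ $\implies$ $J_t=\cO^X(S)$ for a simplex $S$ (via the $S_2=\emptyset$ argument of Lemma \ref{lem:ginj}). This closes the cycle and proves the theorem, with the ``in which case $\g$ is an isomorphism'' clause already recorded along the way.

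The step I expect to be the main obstacle is the converse direction, $L\cong\ov L\implies J_t=\cO^X(S)$: one must be careful that an abstract lattice isomorphism $L\cong\ov L$ need not be $\b$, so I cannot simply say ``$\g$ is injective because $\b$ is onto.'' The cardinality argument via Propositions \ref{prop:LtoiL} and \ref{prop:ovLdouble} sidesteps this cleanly for finite graphs (which is our standing assumption), but I would want to double-check that the doublings there genuinely add $|R|$ and $|S|$ \emph{new} elements (they do: $\rho$ maps into $\tilde L\setminus L$ and $\s$ maps into $\ov L\setminus\ov\b(\tilde L)$, both bijectively). Everything else is bookkeeping with results already in hand.
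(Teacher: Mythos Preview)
Your proof is correct, and in its core it rests on the same two ingredients the paper uses: Lemma~\ref{lem:ginj} for the equivalence ``$\g$ injective $\iff$ $J_t=\cO^X(S)$ with $S$ a simplex'', and a finite--cardinality argument for the converse implication ``$L\cong\ov L\Rightarrow\g$ injective''. The difference is only in how the cardinality step is carried out.

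You compute $|\ov L|=|L|+|R|+|S|$ via Propositions~\ref{prop:LtoiL} and~\ref{prop:ovLdouble} and then deduce $R=S=\emptyset$ from $|L|=|\ov L|$, finally arguing back (either through $S_2=\emptyset$ or through $\tilde L=L$ and $T=\emptyset$) to the injectivity of $\g$. The paper does something much quicker: since $\b$ is injective and $\g\b=\id_L$ (Corollary~\ref{cor:gb}), if $|L|=|\ov L|$ then $\b$ is a bijection of finite sets, hence $\g$, being a left inverse of $\b$, is its two--sided inverse and in particular injective; Lemma~\ref{lem:ginj} then finishes. This bypasses the doubling propositions entirely. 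Your route has the advantage of being more explicit about \emph{which} elements of $\ov L$ are new, and it makes the structural statements of \S\ref{sec:SE} do some work; the paper's route is shorter and needs only Corollary~\ref{cor:gb} and Lemma~\ref{lem:ginj}. One small caution on your first route: the parenthetical remark ``$J_t\in S_2$'' at the end of the proof of Lemma~\ref{lem:ginj} is stated there only for the case where $J_t$ is already closed, so you should first note (as your alternative route does) that $R=\emptyset$ forces $J_t\in L_t\subseteq L$ before invoking it.
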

\begin{proof}
From Lemma \ref{lem:ginj}, if $J_t=\cO^X(S)$, where $S\subset X$ is a simplex, then
 the lattices are isomorphic
and $\g$ is an isomorphism.
Now suppose that $J_t$ is not of this form. The map $\b:L\maps \ov L$ is injective
so $|L|\le |\ov L|$. If $|L|=|\ov L|$ then,
as
$\g\b=\id_L$,
it follows that $\g$ is also injective, contrary to  Lemma
\ref{lem:ginj}.
Thus $|L|<|\ov L|$ and the
lattices are not isomorphic.
\end{proof}
\subsection{Abelian Inflation and Deflation}\label{sec:abinf}
In this section we consider further the case  where the
set $J_t$ defined above is the orthogonal complement of a simplex, as in the previous section.
First
we introduce some equivalence classes on subsets of vertices $\G$.
We say that two subsets
$S$ and $T$
of $X$ are $\perp$-{\em equivalent} in $X$  and write
$S\sim_\perp T$ if and
only if $S^\perp=T^\perp$; that is  $\cO^X(S)=\cO^X(T)$.
\begin{lem} \label{lem:eq}
Let $S$ and $T$ be subsets of $X$.
\be
\item $S\sim_\perp T$ if and only if
$T\subseteq \cl^X(S)$ and $S\subseteq \cl^X(T)$.
    \item If $S\sim_\perp T$  and $Y\in \CS(\G)$ then $S\subseteq Y$
implies that $T\subseteq Y$.
\item \label{it:eq} If $S$ is a simplex and $S\sim_\perp T$ then $T$ is a simplex.
In particular, in this case,
$G(\G^\prime)$ is an Abelian group, where $\G^\prime$
denotes the full subgraph of $\G$ on $S\cup T$.
\ee
\end{lem}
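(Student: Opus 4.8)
The plan is to reduce all three parts to the elementary behaviour of the orthogonal complement operator $\cO^X=(\cdot)^\perp$ and the closure operator $\cl^X$ recorded in Lemmas \ref{lem:orth} and \ref{lem:cl}, together with Example \ref{ex:cl}. For the first statement I would prove the two implications separately. If $S\sim_\perp T$, that is $\cO^X(S)=\cO^X(T)$, then applying $\cO^X$ once more gives $\cl^X(S)=\cl^X(T)$, and Lemma \ref{lem:cl}.\ref{it:cl1} immediately yields $S\subseteq \cl^X(S)=\cl^X(T)$ and $T\subseteq \cl^X(T)=\cl^X(S)$. Conversely, suppose $T\subseteq \cl^X(S)$ and $S\subseteq \cl^X(T)$. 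Since $\cO^X$ is inclusion reversing (Lemma \ref{lem:orth}.\ref{it:orth3}) and $\cO^X(\cl^X(S))=\cO^X(S)$ by Lemma \ref{lem:orth}.\ref{it:orth2}, the inclusion $T\subseteq \cl^X(S)$ gives $\cO^X(S)=\cO^X(\cl^X(S))\subseteq \cO^X(T)$; the symmetric hypothesis gives the opposite inclusion, whence $\cO^X(S)=\cO^X(T)$.

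For the second statement, given $S\sim_\perp T$ and a closed set $Y$ with $S\subseteq Y$, I would use the first part to get $T\subseteq \cl^X(S)$, and then Lemma \ref{lem:cl}.\ref{it:cl4} together with $\cl^X(Y)=Y$ to conclude $T\subseteq \cl^X(S)\subseteq \cl^X(Y)=Y$.

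For the third statement, assuming $S$ is a simplex, Lemma \ref{lem:cl}.\ref{it:cl8} gives $\cl^X(S)\subseteq S^\perp$, so the first part yields $T\subseteq \cl^X(S)\subseteq S^\perp=T^\perp$ (the last equality being $S\sim_\perp T$), and Lemma \ref{lem:orth}.\ref{it:orth6} then shows $T$ is a simplex. To obtain the assertion about $\G^\prime$ I would check that $S\cup T$ is itself a simplex: adjacency of pairs of vertices within $S$, and within $T$, is clear since $S$ and $T$ are simplices, while for $s\in S$ and $\tau\in T$ one has $\tau\in T\subseteq S^\perp=\cO^X(S)$, so $d(s,\tau)\le 1$. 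Hence the full subgraph on $S\cup T$ is complete, and by Example \ref{ex:cl} the group $G(\G^\prime)=G(S\cup T)$ is Abelian.

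I expect none of these steps to present a real obstacle; the only point that needs a moment's care is the last one, where it is not enough to know that $S$ and $T$ are each simplices and one must additionally invoke the inclusion $T\subseteq S^\perp$ to see that their union spans a complete graph.
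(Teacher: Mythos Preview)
Your proof is correct and follows essentially the same route as the paper's: both arguments reduce each part to the inclusion-reversing property of $\cO^X$ and the basic closure identities of Lemmas \ref{lem:orth} and \ref{lem:cl}. The only cosmetic difference is in part \ref{it:eq}, where the paper argues directly from $S\subseteq S^\perp=T^\perp$ (so $T\subseteq S^\perp=T^\perp$) rather than via Lemma \ref{lem:cl}.\ref{it:cl8}, and leaves the ``in particular'' clause implicit; your explicit verification that $S\cup T$ is a simplex is a welcome addition.
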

\begin{proof}
To see the first statement note that, using Lemma \ref{lem:orth},
$S\sim_\perp T$ if and only if $\cl^X(S)=\cl^X(T)$. It follows that
$S\sim_\perp T$ implies that $S\subseteq \cl^X(T)$ and $T\subseteq \cl^X(S)$.
Conversely if $S\subseteq \cl^X(T)$ then $S^\perp \supseteq T^{\perp\perp\perp}
=T^\perp$. Similarly if $T\subseteq \cl^X(S)$ 
then $T^\perp \supseteq S^\perp$ and
the result follows.
 To prove the second
statement note that by Lemma \ref{lem:cl}, $S\subseteq Y$ and $Y$ closed
implies $\cl^X(S)\subseteq Y$. Thus $T\subseteq \cl^X(T)=\cl^X(S)\subseteq Y$.
For the third statement we have $S\subseteq \cO^X(S)=\cO^X(T)$, since $S$ is
a simplex, and so $T\subseteq \cO^X(S)=\cO^X(T)$. Hence $T$ is a simplex
and the result follows.
\end{proof}
In the light of Lemma \ref{lem:eq}.\ref{it:eq} we define the {\em
Abelian closure} $\acl(S)$ of a simplex $S$ to be the union of  subsets $T$ of
$X$ such that $S\sim_\perp T$. Then $S\subseteq \acl(S)$ and
it is easy to see then that $\acl(S)$ is
the unique maximal simplex such that $S\sim_\perp \acl(S)$.

Now let $\D$ be a graph with vertices $V$. Let $S$ be a simplex of
$\D$ and $y\in V$ with
$y\notin S$ and
 suppose that $S\sim_\perp \{y\}$ in $\D$: that is $\cO^{V}(S)
=\cO^{V}(y)$.  Let $\D_y=\D\backslash \{y\}$
. Then $\D_y$
is called an {\em elementary Abelian deflation} of
$\D$ and $\D$ is called an {\em elementary Abelian inflation}
of $\D_y$. In this case the subgroup of $\D_y$ generated by
$S$ is a free Abelian group of rank $|S|$ and the subgroup
of $\D$ generated by $S\cup \{y\}$ is free Abelian of rank $|S|+1$.

If a graph $\Omega$ can be obtained from a graph
$\Gamma$ by finitely many elementary Abelian inflations then
$\Omega$ is called an {\em Abelian inflation} of $\Gamma$ and $\Gamma$ is
called an {\em Abelian deflation} of $\Omega$. The same terminology
carries over to the respective partially commutative groups.

\begin{prop}\label{prop:abinf}
If $\D$ is an Abelian inflation of $\G$ then $\CS(\D)\simeq \CS(\G)$.
\end{prop}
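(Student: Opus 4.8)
The plan is to reduce, by induction on the number of steps, to a single elementary Abelian inflation, and then to recognise that case as an instance of the vertex-adjunction set-up of Section~\ref{sec:av}, to which Theorem~\ref{thm:ginj} applies. Since by definition an Abelian inflation of $\G$ is built from finitely many elementary Abelian inflations, it is enough to prove that $\CS(\D)\simeq\CS(\D_y)$ whenever $\D$ is an elementary Abelian inflation of $\D_y$; the general statement then follows at once by induction on the number of elementary inflations.

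So suppose $V=V(\D)$, let $y\in V$, and let $S\subseteq V\smallsetminus\{y\}$ be the simplex of $\D$ witnessing the elementary inflation, i.e. $\cO^{V}(S)=\cO^{V}(y)$, and put $X=V\smallsetminus\{y\}=V(\D_y)$. In the notation of Section~\ref{sec:av} I would take $\G=\D_y$, $t=y$, $\ov X=V$, $\ov\G=\D$; then $J_t$ is precisely the set of vertices of $\D_y$ joined to $y$ in $\D$, namely $J_t=\cO^{V}(y)\cap X$. The key step is the identification $J_t=\cO^{X}(S)$, the orthogonal complement of $S$ computed \emph{inside} $\D_y$. For this one notes that passing from $\D_y$ to $\D$ adds only edges incident to $y$, so adjacency among vertices of $X$ is unchanged and hence $\cO^{X}(S)=\cO^{V}(S)\cap X$ (the identity $\cO^{Z}(Y)=\cO^{X}(Y)\cap Z$ recorded in Section~\ref{sec:OC}, applied with $(Z,X)$ in place of $(X,V)$); together with the hypothesis this gives $J_t=\cO^{V}(y)\cap X=\cO^{V}(S)\cap X=\cO^{X}(S)$. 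As $S$ is a simplex of $\D$ contained in $X$, it is a simplex of $\D_y$, so $J_t$ is the orthogonal complement in $X$ of a simplex of $\D_y$; Theorem~\ref{thm:ginj} then gives that the map $\g$ of Corollary~\ref{cor:gb} is a lattice isomorphism $\CS(\D)\to\CS(\D_y)$, which settles the single-step case and hence the proposition.

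I do not anticipate a genuine obstacle: the content is essentially the translation of the hypothesis ``$S\sim_\perp\{y\}$ in $\D$'' into the hypothesis ``$J_t=\cO^X(S)$ for a simplex $S$'' demanded by Theorem~\ref{thm:ginj}. The only points needing a little care are the bookkeeping that $\cO^{X}(S)$ (inside $\D_y$) equals $\cO^{V}(S)\cap X$ (inside $\D$), which rests on the fact that an elementary inflation adds only edges through $y$, and, if one wishes to be exhaustive, the degenerate case $S=\nul$, where $\cO^{V}(y)=V$ forces $J_t=X=\cO^{X}(\nul)$ and Theorem~\ref{thm:ginj} still applies with the empty simplex.
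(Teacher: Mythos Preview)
Your proof is correct and follows essentially the same route as the paper: reduce to a single elementary Abelian inflation, identify it with the vertex-adjunction setup of Section~\ref{sec:av}, verify that $J_t=\cO^X(S)$ for the witnessing simplex $S$, and invoke Theorem~\ref{thm:ginj}. Your justification of the identity $\cO^X(S)=\cO^V(S)\cap X$ is in fact slightly more explicit than the paper's, and your remark on the degenerate case $S=\nul$ is a harmless extra.
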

\begin{proof}
It suffices to prove the result in the case where $\D$ is an
elementary Abelian inflation of $\G$. Suppose then that
$\G=\D_t$, for some vertex $t$ of $\D$. To be more explicit let
$V(\D)=\ov X$, assume
that $t\in \ov X$,  $S\subseteq \ov X$ is a simplex,  $t\notin S$ and
$S\sim_\perp \{t\}$ in $\D$.
Let $X=V(\G)$. Then, as $\G=\D_t$ we have $\ov X=X\cup \{t\}$
and $\cO^{\ov X}(t)=\cO^{\ov X}(S)$.
Let $J_t=\cO^{\ov X}(t)\backslash\{t\}$. Then, as $S\subseteq X$, we
have $\cO^X(S)=J_t\in \CS(\G)$.
As $\D$ is obtained from $\G$ by adding the vertex $t$ which is joined
to precisely those vertices in $J_t=\cO^X(S)$, and $S$
is a simplex,
it follows from Theorem \ref{thm:ginj}
that $\CS(\G)\simeq\CS(\D)$, as claimed.
\end{proof}

\subsection{Extension along a closed set}\label{sec:EC}
We saw in Section \ref{subs:simp} that if $J_t$ is a closed set then,
in the notation of Definition \ref{def:extset}, $\ov L$ is obtained from $\b (L)$
by doubling $S_2$ along $\s$.
In this section we shall show that if $J_t$ is closed then
$h(\ov L)= h(L)$. If $J_t=\cO^{X}(S)$ where $S$ is a simplex then
$\ov \G$ is an Abelian inflation of $\G$, so this follows from
Proposition \ref{prop:abinf}. Therefore we assume that
$A\subseteq X$, such that $A$ is not a
simplex, and $J_t=\cO^{X}(A)\in L$. As $A$ is not a simplex
the set $A^\prime = A\backslash J_t$ is non-empty. Fix $a\in A^\prime$.

Now let $Y\in \overline L$ with $t\in Y$. Then $Y=\cO^{\overline X}(Z)$,
where $Z\subseteq J_t\cup \{t\}$.
There are two possibilities. Either
\be
\item
$Z\subseteq J_t$, in which case $A\cup \{t\}\subseteq \cO^{\overline X}(Z)=Y$; or
\item
$Z\nsubseteq J_t$, in which case $Z=W\cup \{t\}$, where $W\subseteq J_t$, so $a\notin Y$.
\ee
In the latter case
\begin{align*}
Y&=\cO^{\overline X}(W\cup \{t\})\\
&= (\cO^{X}(W)\cup \{t\}) \cap (J_t\cup \{t\})\\
&=(\cO^{X}(W)\cap J_t) \cup  \{t\}
\end{align*}
whereas
\begin{align*}
\cO^{\overline X}(W\cup \{a\})
&= \cO^{\overline X}(W)\cap \cO^{\overline X}(a)\\
&= (\cO^{X}(W)\cup \{t\}) \cap \cO^{X}(a)\\
&=\cO^{X}(W)\cap \cO^{X}(a).
\end{align*}
This prompts us to
define a map $\a:\ov{L}\maps \ov{L}$ by
\[\a(Y)=\left\{
\begin{array}{l}
\cO^{\overline X}(W\cup \{a\}) \textrm{ if } t\in Y, a\notin Y
\\
Y \textrm{ otherwise}
\end{array}
.\right.
\]
Note that
\begin{equation}\label{eq:Ysub}
t\notin \a(Y) \textrm{ and }
Y\backslash \{t\}\cup \{a\}\subseteq \a(Y),\textrm{ if } t\in Y
\textrm{ and } a\notin Y
\end{equation}
and that
\begin{equation}\label{eq:Yalt}
\textrm{ either } t\notin \a(Y) \textrm{ or } A\cup \{t\}\subseteq \a(Y)
\textrm{ for all } Y\in \ov L.
\end{equation}
Now let $\ov \cC=Z_1<\cdots <Z_k$ be a strong ascending chain in $\ov L$.
Let $\a(\ov \cC)= \a(Z_1)\le \cdots \le \a(Z_k)$.
\begin{lem}\label{lem:alpchain}
$\a(\ov \cC)$ is a strong ascending chain in $\ov L$.
\end{lem}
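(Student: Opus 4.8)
The plan is to verify, segment by segment along the chain, that consecutive terms of $\a(\ov\cC)$ are strictly nested, and first to note that every term lies in $\ov L$. The latter is immediate from the definition: $\a(Z_i)$ is either $Z_i$ itself or a set of the form $\cO^{\ov X}(W\cup\{a\})$, and every orthogonal complement in $\ov X$ belongs to $\CS(\ov\G)=\ov L$. Since $\ov\cC$ is strictly ascending, the conditions ``$t\in Z_i$'' and ``$a\in Z_i$'' are each inherited by all later terms, so there are indices $r$ and $p$ (possibly $k+1$) with $t\in Z_i\iff i\ge r$ and $a\in Z_i\iff i\ge p$. By the dichotomy displayed just before the definition of $\a$, if $a\notin Z_i$ then $Z_i$ falls in the second case, where $\cO^{\ov X}(Z_i)=W_i\cup\{t\}$ with $W_i\subseteq J_t$ and $Z_i=(\cO^X(W_i)\cap J_t)\cup\{t\}$; and if $t\notin Z_i$ or $a\in Z_i$ then $\a(Z_i)=Z_i$. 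Hence $\a$ is the identity on every $Z_i$ with $i<r$ or $i\ge p$, and if $r\ge p$ there is nothing to prove; so I may assume $r<p\le k$.

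The key observation is that for $r\le i<p$ the value $\a(Z_i)$ determines $Z_i$. In this range $\a(Z_i)=\cO^{\ov X}(W_i\cup\{a\})=\cO^X(W_i)\cap\cO^X(a)$, and since $a\in A$ and $J_t=\cO^X(A)$ we have $J_t\subseteq\cO^X(a)$ by Lemma \ref{lem:orth}.\ref{it:orth3}; therefore $\a(Z_i)\cap J_t=\cO^X(W_i)\cap J_t=Z_i\backslash\{t\}$, so $Z_i=(\a(Z_i)\cap J_t)\cup\{t\}$. In particular, for $i$ in this range, $\a(Z_i)=\a(Z_{i+1})$ would force $Z_i=Z_{i+1}$.

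Now I would check strict nesting at each consecutive pair. For $1\le i\le r-2$ and for $p\le i\le k-1$ both terms are unchanged and $\a(Z_i)=Z_i<Z_{i+1}=\a(Z_{i+1})$. For $i=r-1$ (when $r\ge2$): by \eqref{eq:Ysub}, $(Z_r\backslash\{t\})\cup\{a\}\subseteq\a(Z_r)$, and since $Z_{r-1}\subseteq Z_r$ with $t\notin Z_{r-1}$ this gives $Z_{r-1}\subseteq\a(Z_r)$, strictly because $a\in\a(Z_r)$ while $a\notin Z_{r-1}$. For $r\le i\le p-2$: both $i,i+1$ lie in $[r,p)$, so applying the anti\-automorphism $\cO^{\ov X}$ of $\ov L$ to $Z_i<Z_{i+1}$ yields $W_{i+1}\cup\{t\}\subseteq W_i\cup\{t\}$, hence $W_{i+1}\subseteq W_i$ and $\a(Z_i)=\cO^{\ov X}(W_i\cup\{a\})\subseteq\cO^{\ov X}(W_{i+1}\cup\{a\})=\a(Z_{i+1})$; the inclusion is strict by the key observation. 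For $i=p-1$ (when $p\le k$): since $a\in Z_p$, $Z_p$ falls in the first case, so $\cO^{\ov X}(Z_p)\subseteq J_t$; from $Z_{p-1}<Z_p$ we get $\cO^{\ov X}(Z_p)\subseteq W_{p-1}\cup\{t\}$, hence $\cO^{\ov X}(Z_p)\subseteq W_{p-1}\subseteq W_{p-1}\cup\{a\}$, so $\a(Z_{p-1})=\cO^{\ov X}(W_{p-1}\cup\{a\})\subseteq\cO^{\ov X}(\cO^{\ov X}(Z_p))=Z_p$, strictly because $t\in Z_p$ while $t\notin\a(Z_{p-1})$ by \eqref{eq:Ysub}. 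Combining these, $\a(Z_1)<\cdots<\a(Z_k)$.

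The main obstacle is exactly the middle segment $r\le i<p$, where both $\a(Z_i)$ and $\a(Z_{i+1})$ have been altered and one must rule out accidental equality; the recoverability of $Z_i$ from $\a(Z_i)$ established in the second paragraph is what makes this work, and the boundary transitions at $i=r-1$ and $i=p-1$ are then handled by the explicit containments \eqref{eq:Ysub} together with the case analysis for the endpoints.
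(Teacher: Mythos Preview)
Your proof is correct and follows essentially the same segment-by-segment structure as the paper's proof: both introduce a first index $r$ where $t$ enters the chain and a second index ($p$ for you, $s$ for the paper) beyond which $\a$ is again the identity, then check the boundary transitions at $r-1$ and $p-1$ and the interior transitions separately. Your index $p$ (first appearance of $a$) and the paper's $s$ (first appearance of $A\cup\{t\}$) coincide in the nontrivial case $r<p$, since for $Y\in\ov L$ with $t\in Y$ one has $a\in Y$ exactly when $A\cup\{t\}\subseteq Y$.

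The one genuine difference is in the interior segment $r\le i\le p-2$. The paper shows $\a(Z_i)<\a(Z_{i+1})$ by producing an explicit witness $x\in \cO^X(W_{i+1})\cap J_t$ with $x\notin\cO^X(W_i)$ and observing $J_t\subseteq\cO^X(a)$. You instead prove the recoverability formula $Z_i=(\a(Z_i)\cap J_t)\cup\{t\}$, which shows $\a$ is injective on this segment and hence preserves strictness once the weak inclusion is established. This is a clean alternative and arguably more conceptual, since it explains \emph{why} no collapse can occur rather than exhibiting a separating element.

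One small imprecision: your sentence ``if $a\notin Z_i$ then $Z_i$ falls in the second case'' tacitly assumes $t\in Z_i$, since the dichotomy preceding the definition of $\a$ only applies to closed sets containing $t$. This does not affect the argument, as you only invoke that description for indices in $[r,p)$.
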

\begin{proof}
Define
$r=r(\ov \cC)$ to be the smallest integer such that $t\in Z_r$.
If no such $r$ exists then $\a(\ov \cC)=\ov \cC$ and there is nothing to
prove. Suppose then that $1\le r\le k$.
Let $s$ be the smallest integer such that $A\cup\{t\}\subseteq Z_s$ (and
set $s=k+1$ if  $A\cup\{t\}\nsubseteq Z_k$). Then
$r\le s\le k+1$. For $i$ such that $1\le i\le r-1$ or  $s\le i \le k$ we
have $\a(Z_i)=Z_i$. Therefore we need only check that $\a(Z_i)<\a(Z_{i+1})$ for
$i$ such that $r-1\le i\le s$. If $r=s$ then also $\a(Z_r)=Z_r$ and so
$\a(\ov \cC)=\ov\cC$ and the Lemma holds.

Assume then that $r<s$. In this case $a\notin Z_r$ and so $a\notin Z_{r-1}$.
Therefore $a\notin \a(Z_{r-1})=Z_{r-1}$ but $a\in \a(Z_r)$. As $t\notin Z_{r-1}$ we
have
\[Z_{r-1}\le Z_r\backslash \{t\}\le \a(Z_r),\]
so $\a(Z_{r-1})<\a(Z_r)$.

To see that $\a(Z_{s-1})<\a(Z_s)$ write
$Z_s=\cO^{\ov X}(Y_s)$, where $Y_s\subseteq J_t$ and
$Z_{s-1}=\cO^{\ov X}(W_{s-1}\cup \{t\})$, where $W_{s-1}\subseteq J_t$.
As $Z_{s-1}<Z_s$ we have $W_{s-1}\cup \{t\}\ge Y_s$ and, as $t\notin Y_s$,
$W_{s-1}\ge Y_s$; so $\cO^X(W_{s-1})\le \cO^X(Y_s)$. Therefore
\[\a(Z_{s-1})= \cO^X(W_{s-1})\cap \cO^X(a)\le \cO^X(Y_s)< \cO^X(Y_s)\cup\{t\}=Z_s=\a(Z_s).\]

It remains to check that $\a(Z_i)<\a(Z_{i+1})$, where $r\le i \le s-2$. Given such $i$
we have, for $j=i$ and $j=i+1$,
\[Z_j=\cO^{\ov X}(W_j\cup \{t\})=
(\cO^{X}(W_j)\cap J_t) \cup  \{t\},\]
where $W_j\subseteq J_t$.
As $Z_i<Z_{i+1}$
we have $W_i>W_{i+1}$ so $\cO^X(W_i)\le \cO^X(W_{i+1})$. Therefore
\[\a(Z_i)=\cO^X(W_i)\cap \cO^X(a)\le  \cO^X(W_{i+1})\cap \cO^X(a)=\a(Z_{i+1}).\]
Moreover, as $Z_i<Z_{i+1}$
there is $x\in \cO^{X}(W_{i+1})\cap J_t$ such that
$x\notin \cO^{X}(W_i)\cap J_t$. Hence $x\notin\cO^{X}(W_i)$ and therefore
$x\notin \a(Z_{i})$.  However  $J_t\subseteq \cO^X(a)$ so $x\in \cO^{X}(W_{i+1})\cap J_t$
implies $x\in \a(Z_{i+1})$. Thus $\a(Z_i)<\a(Z_{i+1})$.
\end{proof}

Given a chain $\ov \cC= Z_1<\cdots <Z_k$ in $\ov L$ define $\g(\ov \cC)$
to be the chain $\g(Z_1)\le \cdots \le \g(Z_k)$.
%
\begin{lem}\label{lem:geasy}
If $\ov \cC$ is a strictly ascending chain in $\ov L$ such that $Z_i$ satisfies
\eqref{eq:Yalt}, for $i=1,\ldots, k$,
then $\g(\ov \cC)$ is a strictly ascending chain in $L$.
\end{lem}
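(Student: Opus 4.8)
The plan is to show that none of the inclusions in the ascending sequence $\g(Z_1)\le\cdots\le\g(Z_k)$ is an equality. The first step is to make $\g$ explicit: since we are in the setting of this section, $J_t=\cO^X(A)\in L$, so Lemma \ref{lem:gsimple} applies and gives $\g(Z)=Z\backslash\{t\}$ for every $Z\in\ov L$. In particular $\g$ is inclusion preserving, so $\g(\ov\cC)$ is an ascending chain. Suppose, for contradiction, that $\g(Z_i)=\g(Z_{i+1})$ for some $i$, that is $Z_i\backslash\{t\}=Z_{i+1}\backslash\{t\}$. Since $Z_i\subsetneq Z_{i+1}$ and these two sets differ, the only element of $Z_{i+1}$ missing from $Z_i$ can be $t$; hence $t\notin Z_i$, $t\in Z_{i+1}$ and $Z_{i+1}=Z_i\cup\{t\}$.

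Next I would bring in the hypothesis. Because $t\in Z_{i+1}$ and $Z_{i+1}$ satisfies the condition of \eqref{eq:Yalt} (namely $t\notin Z_{i+1}$ or $A\cup\{t\}\subseteq Z_{i+1}$), we must have $A\cup\{t\}\subseteq Z_{i+1}$, and therefore $A\subseteq Z_{i+1}\backslash\{t\}=Z_i$. The heart of the argument is then the claim that a closed set of $\ov\G$ containing $A$ must also contain $t$ — applied to $Z_i$ this contradicts $t\notin Z_i$ and completes the proof. To prove the claim I would argue as follows: as $A$ is not a simplex, Lemma \ref{lem:orth}.\ref{it:orth6} gives $A\nsubseteq\cO^X(A)=J_t$, so there is a vertex of $A$ not adjacent to $t$, whence $t\notin\cO^{\ov X}(A)$ and consequently $\cO^{\ov X}(A)=\cO^X(A)=J_t$. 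Now $A\subseteq Z_i$ and Lemma \ref{lem:orth}.\ref{it:orth3} give $\cO^{\ov X}(Z_i)\subseteq\cO^{\ov X}(A)=J_t$, and applying $\cO^{\ov X}$ once more (using $Z_i=\cO^{\ov X}\cO^{\ov X}(Z_i)$, since $Z_i\in\ov L$) yields $Z_i\supseteq\cO^{\ov X}(J_t)\ni t$, the required contradiction.

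I do not foresee a serious obstacle here: the only delicate points are invoking Lemma \ref{lem:gsimple} to replace $\g$ by the simple map $Z\mapsto Z\backslash\{t\}$ (legitimate precisely because $J_t$ is closed in this section) and the bookkeeping with the new vertex $t$ when passing between $\cO^X$ and $\cO^{\ov X}$. The only real content is the observation — valid once $J_t=\cO^X(A)$ with $A$ not a simplex — that no closed subset of $\ov\G$ can contain all of $A$ without containing $t$, which is exactly what the hypothesis \eqref{eq:Yalt} on the chain is designed to exploit.
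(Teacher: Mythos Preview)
Your argument is correct and follows essentially the same route as the paper: both use Lemma~\ref{lem:gsimple} to reduce $\g$ to deletion of $t$, then derive a contradiction by showing that if $A\subseteq Z_i$ with $Z_i\in\ov L$ then $t\in Z_i$, via the orthogonal complement in $\ov X$. The paper phrases this last step dually (showing $\cO^{\ov X}(Z_{r-1})\subseteq\cO^{\ov X}(t)$ and hence $t\in Z_{r-1}$), but the content is identical.
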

\begin{proof}
As before define $r=r(\ov \cC)$ to be the smallest integer such that $t\in Z_r$.
As $J_t$ is closed we have $\g(Z)=Z\backslash \{t\}$, for all $Z\in \ov L$.
Therefore it suffices to show that $\g(Z_{r-1})<\g(Z_r)$. We have $A\subseteq
\g(Z_r)$, by \eqref{eq:Yalt}. If $A\subseteq Z_{r-1}=\cO^{\ov X}(Y_{r-1})$ then
$\cO^{\ov X}(t)=J_t\cup \{t\}=\cO^{\ov X}(A)\supseteq Y_{r-1}$. In this case
$t\in \cl^{\ov X}(t)\subseteq Z_{r-1}$, contrary to the definition of $r$.
Hence $A\nsubseteq Z_{r-1}$ and so $\g(Z_{r-1})<\g(Z_r)$.
\end{proof}

Now, given any strictly ascending chain $\ov \cC$ in $\ov L$ we may, according to
Lemma \ref{lem:alpchain},
construct a strictly ascending chain $\cC=\a(\ov \cC)$, satisfying
\eqref{eq:Yalt}; as in the hypothesis of Lemma \ref{lem:geasy}.
Applying $\g$ to $\cC$ we obtain a strictly ascending chain
$\g(\cC)$ in $L$ of the same length as $\ov \cC$. Therefore we have the following
proposition.
\begin{prop}
If $J_t$ is closed then $h(L)=h(\ov L)$.
\end{prop}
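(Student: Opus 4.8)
The plan is to prove the two inequalities $h(L)\le h(\ov L)$ and $h(\ov L)\le h(L)$ separately. The first is already in hand: Corollary \ref{cor:hL} gives $h(L)\le h(\tilde L)\le h(\ov L)$, since the inclusion-type map $\tilde\b$ carries a maximal chain of $L$ to a chain of $\tilde L$ of the same length and $\ov\b$ carries it on into $\ov L$. So the substance of the proof is the reverse inequality $h(\ov L)\le h(L)$, that is, converting an arbitrary maximal strictly ascending chain of $\ov L$ into a strictly ascending chain of $L$ of the same length.

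First I would dispose of the trivial case: if $J_t=\cO^X(S)$ for a simplex $S$, then $\ov\G$ is an Abelian inflation of $\G$ and the claim follows from Proposition \ref{prop:abinf}. So I would assume $A\subseteq X$ with $A$ not a simplex and $J_t=\cO^X(A)\in L$, fix a vertex $a\in A\backslash J_t$ (non-empty precisely because $A$ is not a simplex), and recall the map $\a:\ov L\maps\ov L$ together with the dichotomy \eqref{eq:Yalt}: every value $\a(Y)$ either omits $t$ or contains $A\cup\{t\}$. Then I would take a maximal chain $\ov\cC\colon Z_1<\cdots<Z_k$ in $\ov L$ with $k=h(\ov L)$ — such a chain exists since $\ov L$ is finite — apply Lemma \ref{lem:alpchain} to get that $\cC=\a(\ov\cC)$ is again a strictly ascending chain in $\ov L$ of length $k$, and observe that by \eqref{eq:Yalt} every term of $\cC$ satisfies the hypothesis of Lemma \ref{lem:geasy}. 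Applying $\g$ and invoking Lemma \ref{lem:geasy}, $\g(\cC)$ is then a strictly ascending chain in $L$, necessarily of length $k$ as well. Hence $h(L)\ge k=h(\ov L)$, and combining with Corollary \ref{cor:hL} gives $h(L)=h(\ov L)$.

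The real content lies in Lemmas \ref{lem:alpchain} and \ref{lem:geasy} rather than in the proposition itself, whose proof is a short assembly of them; so I do not expect a genuine obstacle here, only two points to check with care. The first is that $\a$, applied termwise to a strictly ascending chain, remains strictly ascending (Lemma \ref{lem:alpchain}, which uses $J_t\subseteq\cO^X(a)$ crucially to compare $\cO^X(W_i)\cap J_t$ with $\cO^X(W_{i+1})\cap J_t$ across the steps where $t$ has already entered). The second is that, once a chain satisfies \eqref{eq:Yalt}, the map $\g$ — which on $\ov L$ is just deletion of $t$, as $J_t$ is closed — does not collapse the single step at which $t$ first appears; this is exactly what \eqref{eq:Yalt} was introduced to guarantee in Lemma \ref{lem:geasy}. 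Once those two points are confirmed, the proposition follows immediately.
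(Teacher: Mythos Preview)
Your proposal is correct and follows essentially the same route as the paper: dispose of the simplex case via Proposition~\ref{prop:abinf}, then in the non-simplex case push an arbitrary maximal chain of $\ov L$ through $\a$ (Lemma~\ref{lem:alpchain}) to obtain a chain satisfying \eqref{eq:Yalt}, and then through $\g$ (Lemma~\ref{lem:geasy}) to land in $L$ with no loss of length, combining with Corollary~\ref{cor:hL} for the other inequality. The paper's proof is exactly this assembly, stated in one short paragraph preceding the proposition.
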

\subsection{Extension along the complement of a co-simplex}\label{sec:ECS}
A subset $A\subseteq X$ is called a {\em co-simplex} if $A\cap \cO^X(A)=\emptyset$.
In this section we consider the case $J_t=\cO^X(A)$ where $A$ is a co-simplex.
In this case if $Y\in L$ and $Y\subseteq J_t$ then $\cO^X(Y)\supseteq \cO^X(J_t)\supseteq
A$. As $A\cap J_t=\emptyset$ we have $\cO^X(Y)\nsubseteq J_t$, for all such $Y$.
Therefore, if $A$ is a co-simplex,
\[
S_2=\{Y\in L|Y\subseteq J_t,Y\in \CS(J_t)\}=L\cap \CS(J_t)=\CS(J_t),
\]
as $\CS(J_t)\subseteq L$. Therefore $\ov L$ is obtained from $L$ by doubling
$\CS(J_t)$ along $\s$.

It is easy to find examples showing that in general there may be elements of
$L$ which are subsets of $J_t$ but do not  belong to
$\CS(J_t)$. This motivates the following definition.
\begin{defn}\label{def:realisable}
A closed subset $J\in L$ is {\em realisable} if
$\CS(J)=\{Y\in L|Y\subseteq J\}$.
\end{defn}
\begin{lem}\label{lem:realisable}
An element $J\in L$ is realisable if and only if,
for all $s\in X\backslash J$ there exists $W\subseteq J$
such that $\cO^X(s)\cap J=\cO^X(W)\cap J$.
\end{lem}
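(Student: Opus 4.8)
The plan is to first reduce realisability of $J$ to a single inclusion. Writing $J=\cO^X(A)$ for some $A\subseteq X$ (possible since $J\in L$), and recalling that for $U\subseteq J$ the orthogonal complement of $U$ in the full subgraph $\G_J$ is $\cO^J(U)=\cO^X(U)\cap J$, and that $\CS(J)$ is exactly the collection of sets $\cO^J(U)$, $U\subseteq J$ (Lemma \ref{lem:cl}.\ref{it:cl2} applied to $\G_J$), one sees that $\CS(J)\subseteq\{Y\in L\mid Y\subseteq J\}$ \emph{always} holds: for $Y=\cO^J(U)\in\CS(J)$ we have $Y=\cO^X(U)\cap\cO^X(A)=\cO^X(U\cup A)\in L$ by Lemma \ref{lem:orth}.\ref{it:orth5}, and $Y\subseteq J$ is clear. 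Hence $J$ is realisable if and only if every $Y\in L$ with $Y\subseteq J$ already lies in $\CS(J)$, and it is this reverse inclusion that the stated condition will be shown to control.

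For the forward direction I would take $s\in X\backslash J$ and consider $Y=\cO^X(s)\cap J$. Since $\cO^X(s)$ is an orthogonal complement it is closed, $J$ is closed by hypothesis, and so $Y\in L$ by Lemma \ref{lem:cs}; moreover $Y\subseteq J$. Realisability then gives $Y\in\CS(J)$, i.e.\ $Y=\cO^J(W)=\cO^X(W)\cap J$ for some $W\subseteq J$ (take $W=\cO^X(Y)\cap J$), which is precisely the asserted equality $\cO^X(s)\cap J=\cO^X(W)\cap J$.

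For the converse, assume that for each $s\in X\backslash J$ there is $W_s\subseteq J$ with $\cO^X(s)\cap J=\cO^X(W_s)\cap J$, and let $Y\in L$ with $Y\subseteq J$; the goal is $Y\in\CS(J)$. Since $Y$ is closed, $Y=\cO^X(Z)$ with $Z=\cO^X(Y)$ (the degenerate case $Z=\emptyset$ forces $Y=X=J\in\CS(J)$, so assume $Z\neq\emptyset$). Because $Y\subseteq J$, one has $Y=\bigcap_{z\in Z}\bigl(\cO^X(z)\cap J\bigr)$. The point is that every factor lies in $\CS(J)$: if $z\in J$ then $\cO^X(z)\cap J=\cO^J(\{z\})\in\CS(J)$, and if $z\in X\backslash J$ then $\cO^X(z)\cap J=\cO^X(W_z)\cap J=\cO^J(W_z)\in\CS(J)$ by hypothesis. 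Since $Z$ is finite and $\CS(J)$ is closed under finite intersections (Lemma \ref{lem:cs} applied to $\G_J$), we conclude $Y\in\CS(J)$, so $J$ is realisable. The only real subtlety — and the step I would be most careful about — is the bookkeeping around the full subgraph $\G_J$: namely that $\cO^{\G_J}(U)=\cO^X(U)\cap J$ for $U\subseteq J$, that $\CS(\G_J)$ consists exactly of these sets, and that $\CS(\G_J)\subseteq L$; once these are in place the argument is just the two inclusions above.
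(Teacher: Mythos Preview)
Your proof is correct and follows essentially the same route as the paper's: both directions hinge on writing $Y=\cO^X(Z)\cap J$ and showing each factor $\cO^X(z)\cap J$ lies in $\CS(J)$, with the vertices $z\in J$ handled directly and those in $X\backslash J$ handled by the hypothesis. The only cosmetic differences are that you treat the elements of $Z$ one at a time whereas the paper groups them as $Z_1=Z\cap J$ and $Z_2=Z\backslash Z_1$, and you spell out explicitly the ``automatic'' inclusion $\CS(J)\subseteq\{Y\in L\mid Y\subseteq J\}$, which the paper leaves implicit.
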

\begin{proof}
Let $J=\cO^X(A)$, where $A\in L$.
Suppose that $J$ is realisable and that $s\in X\backslash J$.
Then $Y=\cO^X(s)\cap J\in L$ and $Y\subseteq J$; so $Y\in \CS(J)$. Hence
$Y=\cO^X(W)\cap J$, where $W=\cO^X(U)\cap J$, for some $U\subseteq J$,
so $W\subseteq J$ as required.

Now suppose that $J$ satisfies the condition of the Lemma. Let $Y\in L$ such that
$Y\subseteq J$. Then $Y=\cO^X(Z)$, for some $Z\in L$. Let $Z_1=Z\cap J$ and
$Z_2=Z\backslash Z_1$. Fix $z\in Z_2$. By hypothesis there exists
$W_z\subseteq J$ such that $\cO^X(z)\cap J=\cO^X(W_z)\cap J\in \CS(J)$.
Therefore
\[
\cO^X(Z_2)\cap J=\bigcap_{z\in Z_2}(\cO^X(z)\cap J)\in \CS(J).
\]
As $Z_1\subseteq J$ it is also true that $\cO^X(Z_1)\cap J\in \CS(J)$.
We have $Y\subseteq J$ so
$Y=\cO^X(Z)\cap J=(\cO^X(Z_1)\cap J)\cap (\cO^X(Z_2)\cap J)\in \CS(J)$,
as required.
\end{proof}
We now have the following proposition.
\begin{prop}
Let $A$ be a co-simplex such that $\cO^X(A)$ is realisable. If $J_t=\cO^X(A)$
then $\ov L$ is obtained from $L$ by doubling
$S_2=\{Y\in L|Y\subseteq J_t\}$ along $\s$.
\end{prop}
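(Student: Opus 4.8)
The plan is simply to combine the structural description already obtained in the preamble to this subsection with the definition of realisability; essentially no new work is needed. Recall that $J_t=\cO^X(A)$ is an orthogonal complement, hence closed, so Lemma~\ref{lem:gsimple} applies: $S_1=\emptyset$, $\tilde L=L$, and $\ov L$ is obtained from $L$ (via the embedding $\b$, with inverse $\g$ on its image, Corollary~\ref{cor:gb}) by doubling $S_2$ along $\s$. The preamble then computes $S_2$ in the case where $A$ is a (nonempty) co-simplex: for any $Y\in\CS(J_t)$ we have $\cO^X(Y)\supseteq\cO^X(J_t)=\cl(A)\supseteq A$ by Lemma~\ref{lem:orth} (parts~\ref{it:orth1} and~\ref{it:orth3}), while $A\cap J_t=A\cap\cO^X(A)=\emptyset$, so $\cO^X(Y)\nsubseteq J_t$; hence the side condition ``$\cO^X(Y)\nsubseteq J_t$'' in Definition~\ref{def:extset} is vacuous on $\CS(J_t)$ and $S_2=\CS(J_t)$. (Being contained in $\tilde L=L$, this also re-establishes $\CS(J_t)\subseteq L$.)

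The one extra ingredient is the hypothesis that $\cO^X(A)=J_t$ is realisable. By Definition~\ref{def:realisable} this says precisely $\CS(J_t)=\{Y\in L\mid Y\subseteq J_t\}$. Substituting this into the identification $S_2=\CS(J_t)$ of the previous paragraph gives $S_2=\{Y\in L\mid Y\subseteq J_t\}$, and feeding this back into the statement ``$\ov L$ is obtained from $L$ by doubling $S_2$ along $\s$'' is exactly the assertion of the proposition.

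Thus there is no real obstacle here: the proposition is a restatement of the preamble once the definition of ``realisable'' is unwound. The only points deserving a word of care are the routine identification of $L$ with its image $\b(L)$ inside $\ov L$, and the blanket assumption $A\neq\emptyset$ (implicit throughout the subsection, and genuinely needed: the step $\cO^X(Y)\nsubseteq J_t$ uses $A\neq\emptyset$, and indeed for $A=\emptyset$ one has $J_t=X$, a degenerate case for which $S_2=\emptyset$).
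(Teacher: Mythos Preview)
Your proposal is correct and mirrors the paper's own approach exactly: the paper provides no separate proof for this proposition, since it follows immediately from the preamble computation $S_2=\CS(J_t)$ (valid for any co-simplex $A$) together with Definition~\ref{def:realisable}, which is precisely what you have written out. Your remark on the edge case $A=\emptyset$ is a legitimate observation that the paper leaves implicit.
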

\subsection{Free Inflation and Deflation}\label{sec:FI}
We now define another relation on the subsets of $X$, similar to
that of Section \ref{sec:abinf}: but giving rise to free groups instead
of free Abelian groups.
If $J_t$ is the orthogonal complement of a co-simplex $A$ then
$\cO^{\ov X}(A)=J_t$, since $A\cap J_t=\emptyset$, so
$\cO^{\ov X}(A)\backslash A=\cO^{\ov X}(t)\backslash \{t\}$.
This suggests the following definition.
If $Y$ and $Z$ are subsets of $X$ we say that
$Y$ and $Z$ are $o${\em -equivalent} and write
$Y\sim_o Z$ if
\begin{equation} \label{eq:confinf}
\cO^X(Y) \backslash Y =\cO^X(Z) \backslash Z.
\end{equation}
Note that if $Y$ is a co-simplex then
$Y\sim_o Z$ implies that
$\cO^X(Y,Z)=\cO^X(Y)$ and that
$G(\cO^X(Y)) = C(Y\cup Z)$
--- the centraliser of $Y$ and $Z$ in $G(\Gamma)$.

We call a co-simplex $A$ such that the full subgraph on $A$ is
the null graph a {\em free} co-simplex.
If $A$ is a free co-simplex and $B$ is either a free co-simplex
or a single vertex then $A\sim_o B$ implies that the subgroup
of $G$ generated by $A$ and $B$ is a free group. We define the
{\em free-closure} $\fcl(A)$ of  a free co-simplex $A$ to be the
union of all free co-simplexes $B$ such that $A\sim_o B$. It is easy
to see that $A\subseteq \fcl(A)$ and that
$\fcl(A)$ is the unique maximal free co-simplex such that
$A\sim_o \fcl(A)$.

If $J_t$ is the orthogonal complement
of  a free co-simplex then we say that $\ov \G$ is an
{\em elementary free inflation} of $\G$ and that $\G$ is an 
{\em elementary free deflation} of $\ov \G$. We say that $\D$ is 
a
{\em free inflation} of $\G$ and  $\G$ is a 
{\em free deflation} of $\D$ if $\D$ is obtained from $\G$ by a finite
sequence of elementary free inflations.
\subsection{The Compression of a Graph}
We now use the ideas of Sections \ref{sec:abinf} and \ref{sec:FI} to define
an equivalence relation on the vertices of a finite graph $\G$; which
will give a decomposition of the automorphism group of $\G$. We build this
equivalence relation up out of the restrictions to singleton sets of the
relations defined in
Sections \ref{sec:abinf} and \ref{sec:FI}.

The restriction of the relation of $\perp$-equivalence to one-element subsets
of $X$ gives and equivalence relation $\sim_\perp$ on $X$ such that
$x\sim_\perp y$ if and only if $x^\perp=y^\perp$. Denote the equivalence
class of $x$ under $\sim_\perp$ by $[x]_\perp$.

Similarly, restricting the relation of $o$-equivalence to one-element subsets
gives an equivalence relation $\sim_o$ on $X$ such that $x\sim_o y$ if and
only if $x^\perp\backslash\{x\}=y^\perp\backslash\{y\}$.
Denote the equivalence
class of $x$ under $\sim_o$ by $[x]_o$.
\begin{lem}\label{lem:equiv}\
\be
\item $[x]_\perp$ is a simplex, for all $x\in X$.
\item $[x]_\perp\cap [x]_o=\{x\}$, for all $x\in X$.
\item If $|[x]_\perp|\ge 2$ then $| [x]_o|=1$.
\item If $|[x]_o|\ge 2$ then $[x]_o$ is a free co-simplex and $|[x]_\perp|=1$.
\ee
\end{lem}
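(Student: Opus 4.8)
The plan is to establish the four assertions in the stated order, the whole argument resting on the single elementary observation that $x\in x^\perp$ for every $x$ (since $d(x,x)=0\le 1$) while $x\notin x^\perp\smallsetminus\{x\}$; this asymmetry is precisely what couples the two relations. Throughout I recall that $x\sim_\perp y$ means $x^\perp=y^\perp$ and $x\sim_o y$ means $x^\perp\smallsetminus\{x\}=y^\perp\smallsetminus\{y\}$, and that both relations are equivalence relations.

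For (1), I would take distinct $y,z\in[x]_\perp$; then $z\in z^\perp=x^\perp=y^\perp$, so $d(y,z)\le 1$, and since $y\ne z$ the pair $\{y,z\}$ is an edge of $\G$. Hence the full subgraph on $[x]_\perp$ is complete, that is, $[x]_\perp$ is a simplex. (Alternatively one could note $[x]_\perp\subseteq\acl(\{x\})$ and quote Lemma \ref{lem:eq}.) For (2), the inclusion $\{x\}\subseteq[x]_\perp\cap[x]_o$ is just reflexivity. For the reverse inclusion I would suppose $y\in[x]_\perp\cap[x]_o$ with $y\ne x$: from $y\sim_\perp x$ we get $y\in y^\perp=x^\perp$, so (as $y\ne x$) $y\in x^\perp\smallsetminus\{x\}$; but $y\sim_o x$ gives $x^\perp\smallsetminus\{x\}=y^\perp\smallsetminus\{y\}$, forcing $y\in y^\perp\smallsetminus\{y\}$, which is absurd. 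Hence $y=x$ and $[x]_\perp\cap[x]_o=\{x\}$.

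For (3), I would assume $|[x]_\perp|\ge 2$, pick $y\in[x]_\perp$ with $y\ne x$, and suppose toward a contradiction that some $z\in[x]_o$ has $z\ne x$. By (2), $y\notin[x]_o$, so $z\ne y$. Since $y\in y^\perp=x^\perp$ and $y\ne x$, we have $y\in x^\perp\smallsetminus\{x\}=z^\perp\smallsetminus\{z\}$, whence $d(y,z)\le 1$ and, as $y\ne z$, the pair $\{y,z\}$ is an edge. On the other hand $z\in z^\perp$ but $z\notin z^\perp\smallsetminus\{z\}=x^\perp\smallsetminus\{x\}$, so (using $z\ne x$) $z\notin x^\perp=y^\perp$, i.e. $d(y,z)\ge 2$ --- a contradiction. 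Therefore $|[x]_o|=1$.

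For (4): $|[x]_\perp|=1$ is the contrapositive of (3). For the remaining claim I would take distinct $y,z\in[x]_o$; transitivity of $\sim_o$ gives $y^\perp\smallsetminus\{y\}=z^\perp\smallsetminus\{z\}$, and if $z\in y^\perp$ then (as $z\ne y$) $z\in y^\perp\smallsetminus\{y\}=z^\perp\smallsetminus\{z\}$, impossible; hence $d(y,z)\ge 2$, so $y,z$ are non-adjacent and the full subgraph on $[x]_o$ is the null graph. Finally, since $|[x]_o|\ge 2$, for each $a\in[x]_o$ there is $b\in[x]_o$ with $b\ne a$, and then $a\notin b^\perp$; as $\cO^X([x]_o)\subseteq b^\perp$ this gives $a\notin\cO^X([x]_o)$, so $[x]_o\cap\cO^X([x]_o)=\emptyset$ and $[x]_o$ is a free co-simplex (equivalently $[x]_o\subseteq\fcl(\{x\})$). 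The one point that demands care is keeping straight which relation forces which membership and exploiting the $x\in x^\perp$ versus $x\notin x^\perp\smallsetminus\{x\}$ dichotomy; part (3) is the most delicate, since it uses both relations together with part (2), but I foresee no genuine obstacle.
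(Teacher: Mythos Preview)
Your proof is correct and follows essentially the same route as the paper's: both arguments hinge on the basic observation that $x\in x^\perp$ but $x\notin x^\perp\smallsetminus\{x\}$, and the contradictions in parts (2)--(4) are obtained in the same way (with only cosmetic differences, e.g.\ in (2) you track $y$ rather than $x$). Your treatment is in fact a little more explicit---for instance you spell out the co-simplex condition $[x]_o\cap\cO^X([x]_o)=\emptyset$ in (4), which the paper omits since its definition of \emph{free co-simplex} requires only that the induced subgraph be null.
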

\begin{proof}
If $x\sim_\perp y$ then, as $x\in x^\perp$ we have  $x\in y^\perp$, so
$[x]_\perp$ is a simplex. If $z\in [x]_\perp\cap [x]_o$ then $x\in
x^\perp =z^\perp$ but
$x\notin x^\perp\backslash\{x\}=z^\perp\backslash\{z\}$: so it must be
that $x=z$. If $|[x]_\perp|\ge 2$ then suppose that $y\neq x$ and $y\sim_\perp
x$. If $z\neq x$ and $z\sim_o x$ then $z\neq y$, as $[x]_\perp\cap [x]_o=\{x\}$
from the above. Thus $y\in y^\perp=x^\perp$ implies $y\in
x^\perp\backslash \{x\}=z^\perp\backslash\{z\}$, so $z\in y^\perp=x^\perp$,
contradicting $z\sim_o x$. A similar argument shows that if
$|[x]_o|\ge 2$ then  $|[x]_\perp|=1$. If $y\neq x$ and $y\sim_o x$ then
$y\notin x^\perp$, as otherwise $ x^\perp\backslash\{x\}\neq
y^\perp\backslash\{y\}$. Hence $[x]_o$ is a free co-simplex if
$|[x]_o|\ge 2$.
\end{proof}

Now define a relation $\sim$ on $X$ by
$x\sim y$ if and only if either $x\sim_\perp y$ or $x\sim_o y$. From Lemma
\ref{lem:equiv} $\sim$ is an equivalence relation and we denote the equivalence
class of $x$ under $\sim $ by $[x]$. Define subsets $M_1$, $M_\perp$ and
$M_o$ of $X$ by
\begin{align*}
M_1&=\{x\in X: [x]=[x]_o=[x]_\perp=\{x\}\},\\
M_\perp &= \{x\in X: |[x]_\perp| \ge 2\} \textrm{ and }\\
M_o&= \{x\in X: |[x]_o| \ge 2\}.
\end{align*}
From Lemma \ref{lem:equiv} it follows that $X$ is the disjoint
 union $X=M_1\sqcup M_\perp\sqcup M_o$.

We use the equivalence $\sim$ to define a quotient graph of $\G$.
\begin{defn}\label{def:comp}
The {\em compression} of the graph $\G$ is the graph $\G^{\tc}$ with
vertices $X^\tc=\{[v]:v\in X\}$ and an edge joining
$[u]$ to $[v]$ if and only if $(u^\prime,v^\prime)$ is an edge of
$\G$ for all
$u^\prime\in [u]$ and $v^\prime \in [v]$.
\end{defn}
Note that although $\G$ has no loops it may be that there are loops
in $\G^\tc$ (if there are vertices of $\G$ such that $[x]_\perp$ has more
than two elements). If $\G$ and $\G^\prime$ are graphs without multiple
edges, and
there is a map $f:V(\G)\maps V(\G^\prime)$ then we say that $f$ induces
a graph  homomorphism $f:\G\maps \G^\prime$ if $(f(u),f(v))\in E(\G)$
for all $(u,v)\in \G$.
\begin{prop}\label{prop:comp}
The map $\tc:X\maps X^\tc$ given by $\tc(x)=[x]$, for $x\in X$, induces
a surjective graph homomorphism $\tc:\G\maps \G^\tc$.
\end{prop}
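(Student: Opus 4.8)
The plan is to verify directly that the vertex map $\tc\colon X\maps X^\tc$, $x\mapsto[x]$, satisfies the defining condition of a graph homomorphism given just before the statement, namely that $(u,v)\in E(\G)$ implies $([u],[v])\in E(\G^\tc)$; surjectivity is then immediate since every class $[v]$ is by definition $\tc(v)$. So the only content is the edge-preservation claim, and the key step is to show: if $(u,v)$ is an edge of $\G$, then $(u',v')$ is an edge of $\G$ for \emph{all} $u'\in[u]$ and $v'\in[v]$. This is exactly what is needed for $([u],[v])$ to be an edge of $\G^\tc$ by Definition \ref{def:comp}.

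First I would reduce to the case of changing one endpoint at a time: it suffices to show that if $(u,v)\in E(\G)$ and $u\sim u'$ then $(u',v)\in E(\G)$, and then apply this twice (once at each endpoint). So fix $(u,v)\in E(\G)$ and suppose $u\sim u'$ with $u'\neq u$. By definition of $\sim$ and Lemma \ref{lem:equiv}, either $u\sim_\perp u'$ or $u\sim_o u'$. In the first case, $u^\perp=u'^\perp$; since $(u,v)\in E(\G)$ means $v\in u^\perp$, we get $v\in u'^\perp$, i.e.\ $(u',v)\in E(\G)$ (note $v\neq u'$, since $v\in u^\perp\setminus\{u\}$ would force $v\neq u'$ were $u'=u$ — more carefully, if $v=u'$ then $u'\in u^\perp$, fine, but then the ``edge'' $(u',v)$ is a loop; however since $(u,v)$ is a genuine edge of the loopless graph $\G$ we have $u\neq v$, and as $u\sim_\perp u'$ with $u,u'$ distinct while $v\in u^\perp=u'^\perp$, we still get $v\in u'^\perp$, and we must check $v\neq u'$: indeed if $v=u'$ then $v\sim_\perp u$, but $v\in u^\perp$ and then $u\in v^\perp=u^\perp$ is automatic; the subtlety is harmless because $\G^\tc$ is allowed to have loops by the remark after Definition \ref{def:comp}). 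In the second case $u\sim_o u'$, so $u^\perp\setminus\{u\}=u'^\perp\setminus\{u'\}$; since $u\neq v$ we have $v\in u^\perp\setminus\{u\}=u'^\perp\setminus\{u'\}$, hence $v\in u'^\perp$ and $v\neq u'$, so $(u',v)\in E(\G)$.

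Having established edge-preservation at one endpoint, I would iterate: given $(u,v)\in E(\G)$ and arbitrary $u'\in[u]$, $v'\in[v]$, first replace $u$ by $u'$ to get $(u',v)\in E(\G)$, then apply the same argument with the roles of the endpoints reversed to replace $v$ by $v'$, obtaining $(u',v')\in E(\G)$. Since $u',v'$ were arbitrary in their classes, Definition \ref{def:comp} gives that $([u],[v])=(\tc(u),\tc(v))$ is an edge of $\G^\tc$. Thus $\tc$ induces a graph homomorphism, and it is surjective on vertices by construction, completing the proof. The only mild subtlety — which I flagged above — is bookkeeping about whether some of the resulting ``edges'' are loops; this causes no problem because Definition \ref{def:comp} and the surrounding discussion explicitly permit $\G^\tc$ to have loops, so the homomorphism condition $(f(u),f(v))\in E(\G^\tc)$ is still meaningful and satisfied. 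I expect this loop bookkeeping to be the only place requiring any care; the rest is a direct unwinding of definitions.
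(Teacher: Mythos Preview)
Your approach and the paper's are essentially the same idea --- replace one endpoint at a time using the definition of $\sim$ --- and your argument is fine when $[u]\neq[v]$. The problem is the loop case $[u]=[v]$, where your write-up becomes muddled.

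Concretely, your stated ``key step'' --- that $(u',v')$ is an edge of $\G$ for \emph{all} $u'\in[u]$ and $v'\in[v]$ --- is literally false when $[u]=[v]$: take $u'=v'$ and recall $\G$ has no loops. (The paper's Definition~\ref{def:comp} must be read with the implicit proviso $u'\neq v'$, and indeed the paper uses it that way.) Your two-step replacement also stalls at the intermediate stage when $u'=v$: you need $(u',v)=(v,v)$ to be an edge of $\G$, which it is not. Your parenthetical tries to dissolve this by invoking that $\G^\tc$ allows loops, but that is a statement about the \emph{target} graph, whereas the obstruction is in the \emph{source} graph $\G$. The paper avoids the difficulty by treating $[u]=[v]$ separately: if $(u,v)\in E(\G)$ with $[u]=[v]$ then the class cannot be an $o$-class (those are free co-simplices, so contain no edges), hence $[u]=[u]_\perp$ is a simplex with at least two elements, and one reads off directly that there is a loop at $[u]$ in $\G^\tc$. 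With that case isolated, your endpoint-replacement argument is exactly what the paper does for $[u]\neq[v]$, and there the possibility $u'=v$ simply does not arise.
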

\begin{proof}
The map $\tc:X\maps X^\tc$ is surjective by definition. If $\tc$ maps
edges of $\G$ to edges of $\G^c$ then,
since neither graph has multiple edges, the induced map
is a surjective graph homomorphism.
Therefore it suffices to show that
if $(u,v)$ is an  edge of $\G$ then $([u],[v])$ is an edge of $\G^c$.

Suppose then that $u,v\in X$, $u\neq v$ and $(u,v)$ is an edge of $\G$.
If $[u]=[v]$ and $|[u]|=1$ or $[u]=[u]_o$ then there are no edges of $\G$
joining
elements of $[u]$ to each other. Therefore if  $[u]=[v]$ we may assume
that $[u]=[v]=[u]_\perp$. In this case $[u]$ is a simplex, with more than
one element since $u\neq v$, and so there
is a loop $e$ in $\G^\tc$ from $[u]$ to itself. Thus $(u,v)$ maps to $e$,
as required.

Now suppose that $[u]\neq [v]$. If $|[u]|=|[v]|=1$ then $([u],[v])$ is clearly
an edge of $\G^\tc$. Suppose then that $|[v]|\ge 2$ and that $z\in [v]$, $z\neq v$.
Then $(u,v)\in E(\G)$ implies $u\in v^\perp$. As either $z\sim_\perp v$ or
$z\sim_o v$ and $z\neq v$ it follows that $z\in u^\perp$. If $|[u]|=1$ this implies
that $([u],[v])\in E(\G^\tc)$. If $|[u]|\ge 2$ then let $w\in [u]$, $w\neq u$. Then
$w\neq z$ (as $[u]\neq [v]$) and $z\in u^\perp$ implies $z\in w^\perp$. Hence $(w,z)\in
E(\G)$ and it follows that $([u],[v])$ is an edge of $\G^\tc$.
\end{proof}

As usual we extend $\tc$ to a map from subsets of $X$ to subsets
of $X^\tc$ by setting $\tc(Y)=\cup_{y\in Y} \{\tc(y)\}$, for $Y\subseteq X$.
If $[y]\in X^\tc$ then
$[y]^\perp =\{[u]\in X^\tc: d([u],[y])\le 1\}=\{[u]\in X^\tc: d(u,y)\le 1\}$,
by definition of $\G^\tc$, so for all $y\in X$,
\[
\tc(y)^\perp =\{[u]\in X^\tc:u\in y^\perp\}=
\bigcup_{u\in y^\perp}\{\tc(u)\}=\tc(y^\perp).
\]
Now suppose that $Z=\{z_1,\ldots ,z_n\}\subseteq X$. Then
$\tc(Z)^\perp=(\cup_{i=1}^n \tc(z_i))^\perp
=\cap_{i=1}^n\tc(z_i)^\perp
=\cap_{i=1}^n\tc(z_i^\perp)
$
.
Clearly $\cap_{i=1}^n\tc(z_i^\perp)\supseteq \tc(\cap_{i=1}^n z_i^\perp)$.
On the other hand, if $[u]\in \cap_{i=1}^n\tc(z_i^\perp)$ then
$[u]\in [z_i]^\perp$, so $d([u],[z_i])\le 1$ and so $d(u,z_i)\le 1$, for
$i=1,\ldots , n$. Therefore $u\in \cap_{i=1}^n z_i^\perp$ from which it follows that
$[u]\in \tc(\cap_{i=1}^n z_i^\perp)$. Hence
$\tc(Z)^\perp=\tc(\cap_{i=1}^n z_i^\perp)=\tc(Z^\perp)$.

Now restricting the map $\tc$ to closed sets we see that if
$Y\in \CS(\G)$ then $Y=Z^\perp$, for some $Z\subseteq X$ so
$\tc(Y)=\tc(Z^\perp)=\tc(Z)^\perp\in \CS(\G^\tc)$. Hence $\tc$ induces a map from
$\CS(\G)$ to $\CS(\G^\tc)$, which we denote by $\tc_L$.
Let $L$ denote the lattice $\CS(\G)$ and $L^\tc$ the lattice $\CS(\G^\tc)$.
\begin{prop}\label{prop:clat}
The map $\tc_L:L\maps L^c$ is  a lattice epimorphism which preserves the unary relation
$\perp$: that is $\tc_L(Y^\perp) =\tc_L(Y)^\perp$, for all $Y\in L$.
\end{prop}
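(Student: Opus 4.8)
The plan is to deduce everything from the single identity $\tc(W^\perp)=\tc(W)^\perp$, valid for every $W\subseteq X$, which was established in the discussion immediately preceding the proposition, together with the fact recorded there that $\tc$ induces a well-defined map $\tc_L\colon\CS(\G)\maps\CS(\G^\tc)$, and with the standard descriptions of $\wedge$ and $\vee$ in a lattice of closed sets. Throughout I would, as in the statement, write $D^\perp$ for $\cO^{X^\tc}(D)$ when $D\subseteq X^\tc$, so that ``$\tc_L$ preserves $\perp$'' means $\tc(\cO^X(Y))=\cO^{X^\tc}(\tc(Y))$ for all $Y\in L$, and I would freely use that $\tc(A\cup B)=\tc(A)\cup\tc(B)$ (immediate from the definition of $\tc$ on subsets).

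First I would dispose of the easy parts. Preservation of $\perp$ is immediate: if $Y\in L$ then both $Y$ and $Y^\perp=\cO^X(Y)$ are closed, by Lemma~\ref{lem:cs}, so $\tc_L(Y^\perp)=\tc(Y^\perp)=\tc(Y)^\perp=\tc_L(Y)^\perp$. For surjectivity, take $C\in\CS(\G^\tc)$; then $C=\cO^{X^\tc}(D)$ for some $D\subseteq X^\tc$, and since $\tc\colon X\maps X^\tc$ is onto (Proposition~\ref{prop:comp}) I may pick $Z\subseteq X$ with $\tc(Z)=D$; then $C=\tc(Z)^\perp=\tc(Z^\perp)=\tc_L(Z^\perp)$ with $Z^\perp\in L$. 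Hence $\tc_L$ is surjective.

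Next, the lattice operations. Fix $Y_1,Y_2\in L$; both are closed, so $Y_1\wedge Y_2=Y_1\cap Y_2=\cO^X(Y_1^\perp)\cap\cO^X(Y_2^\perp)=\cO^X(Y_1^\perp\cup Y_2^\perp)$ by Lemma~\ref{lem:orth}.\ref{it:orth5}. Applying $\tc$, then the identity (to the, in general non-closed, set $Y_1^\perp\cup Y_2^\perp$), then Lemma~\ref{lem:orth}.\ref{it:orth5} in $\G^\tc$, and then the identity once more, gives
\[
\tc_L(Y_1\cap Y_2)=\big(\tc(Y_1^\perp)\cup\tc(Y_2^\perp)\big)^\perp=\tc(Y_1^\perp)^\perp\cap\tc(Y_2^\perp)^\perp=\tc(Y_1^{\perp\perp})\cap\tc(Y_2^{\perp\perp})=\tc_L(Y_1)\cap\tc_L(Y_2),
\]
the last equality because $Y_i$ is closed; and $\tc_L(Y_1)\cap\tc_L(Y_2)$ is exactly the meet in $\CS(\G^\tc)$ (intersection of closed sets, Lemma~\ref{lem:cs}). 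For the join, $Y_1^\perp\cap Y_2^\perp$ is closed and $Y_1\vee Y_2=\cl(Y_1\cup Y_2)=\cO^X\big(\cO^X(Y_1\cup Y_2)\big)=\cO^X(Y_1^\perp\cap Y_2^\perp)$; applying $\tc$, the identity, then the meet-preservation and $\perp$-preservation just proved,
\[
\tc_L(Y_1\vee Y_2)=\tc(Y_1^\perp\cap Y_2^\perp)^\perp=\big(\tc_L(Y_1^\perp)\cap\tc_L(Y_2^\perp)\big)^\perp=\big(\tc_L(Y_1)^\perp\cap\tc_L(Y_2)^\perp\big)^\perp,
\]
while computing directly in $\CS(\G^\tc)$ gives $\tc_L(Y_1)\vee\tc_L(Y_2)=\cO^{X^\tc}\cO^{X^\tc}\big(\tc_L(Y_1)\cup\tc_L(Y_2)\big)=\cO^{X^\tc}\big(\tc_L(Y_1)^\perp\cap\tc_L(Y_2)^\perp\big)$, the same set.

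The step I expect to be the crux is the preservation of meets: everything else is a one-line consequence of the $\perp$-identity, and in fact once meets are in hand the join case dualizes essentially automatically (this is the same phenomenon as Proposition~\ref{prop:orthmap}). The meet argument, however, genuinely requires that $\tc$ commutes with $\cO^X$ on \emph{arbitrary} subsets, since the decisive rewriting $Y_1\cap Y_2=\cO^X(Y_1^\perp\cup Y_2^\perp)$ is transported through $\tc$ via the identity applied to the non-closed set $Y_1^\perp\cup Y_2^\perp$; so in writing the details I would be careful to invoke $\tc(W^\perp)=\tc(W)^\perp$ only in precisely the generality in which it was established above.
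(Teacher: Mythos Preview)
Your proof is correct and follows essentially the same approach as the paper's: everything is derived from the identity $\tc(W^\perp)=\tc(W)^\perp$ for arbitrary $W\subseteq X$, together with $\tc(A\cup B)=\tc(A)\cup\tc(B)$. The only organisational difference is in the join case: the paper applies the identity twice directly, $\tc_L(S\vee T)=\tc((S\cup T)^{\perp\perp})=(\tc(S\cup T))^{\perp\perp}=(\tc(S)\cup\tc(T))^{\perp\perp}=\tc_L(S)\vee\tc_L(T)$, whereas you route through the meet case and $\perp$-preservation; both are fine, the paper's is marginally shorter.
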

\begin{proof}
As $\tc$ is a surjective map it follows that every subset of $X^\tc$ is the image
of a subset of $X$. If $W$ is a closed subset of $X^\tc$ then $W=V^\perp$ for
some subset $V$ of $X^\tc$. Choose $Y\subseteq X$ such that $\tc(Y)=V$. As we
have seen above we have  $\tc(Y^\perp)=V^\perp = W$. As $Y^\perp\in L$ we have
$\tc_L(Y^\perp)=W$, so $\tc_L$ is a surjective map. It therefore suffices to
show that $\tc_L$ is a lattice homomorphism. If $S, T\in L$ then $S=U^\perp$ and
$T=V^\perp$, for some $U,V\in L$. Then $S\wedge T=S\cap T$ and
\begin{align*}
\tc_L(S)\wedge \tc_L(T)
&=\tc_L(U^\perp)\cap \tc_L(V^\perp)\\
&=\tc_L(U)^\perp\cap \tc_L(V)^\perp\\
&=(\tc_L(U) \cup \tc_L(V))^\perp\\
&=(\tc_L(U\cup V))^\perp\\
&=\tc_L(U^\perp \cap V^\perp)\\
&=\tc_L(S\wedge T).
\end{align*}
Moreover
\begin{align*}
\tc_L(S\vee T)
&= \tc_L((S\cup T)^{\perp\perp})\\
&=(\tc_L(S\cup T))^{\perp\perp}\\
&=(\tc_L(S)\cup \tc_L(T))^{\perp\perp}\\
&=\tc_L(S)\vee \tc_L(T).
\end{align*}
Hence $\tc_L$ is a lattice homomorphism as claimed.
\end{proof}

We make $\G^\tc$ into a labelled graph as follows. For $x\in X$ define
$\mu(x)=|[x]|$ and $\nu(x)=1,$ if $x\in M_1$, $\nu(x)=\perp$, if $x\in M_\perp$ and
$\nu(x)=o$, if $x\in M_o$. Define a labelling function $l:X^c\maps \NN\times
\{1,\perp,o\}$ by $l([y]) =(\mu(y),\nu(y))$, for all $y\in X^\tc$.
\begin{expl}
In drawing the compressed graph vertices with labels of the form $(1, 1)$ or
$(r, \perp)$ are represented as single circles containing the integer $1$ or $r$,
respectively, and vertices with labels of the form $(r,o)$ are represented as
two
concentric circles containing the integer $r$, as in Figure \ref{fig:compgraph}.
\end{expl}
\begin{figure} 
\begin{center}
\includegraphics[scale=0.4]{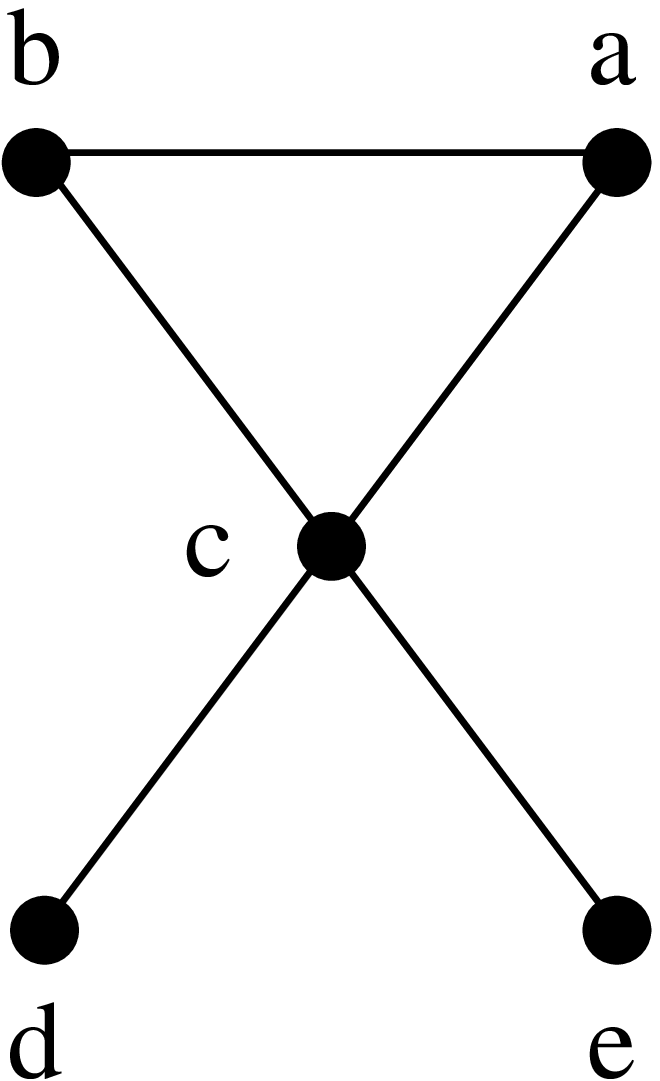}\quad\quad\quad\quad\quad\quad\includegraphics[scale=0.4]{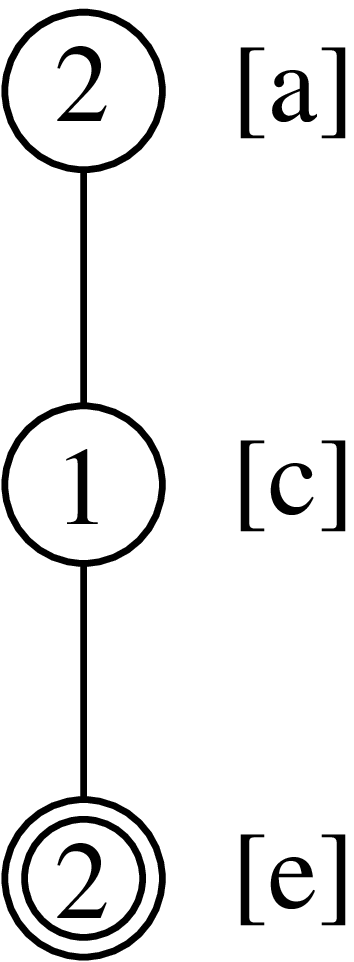}
\caption{A graph $\G$ and its compression $\G^c$}\label{fig:compgraph}
\end{center}
\end{figure}
Let $\Aut(\G^c)$ denote the group of automorphisms of $\G^c$ as a labelled graph: that is
$\phi \in \Aut(\G^c)$ if and only if $\phi $ is an automorphism of the graph $\G^\tc$
such that $l(\phi([v]))=l([v])$, for all $[v]\in X^\tc$.
Let $\Aut(\G)$ denote the group of graph automorphisms of $\G$ and let
 $\phi\in \Aut(\G)$. Since $\phi$ is an automorphism we have $\phi(u^\perp)
=\phi(u)^\perp$, for all $u\in X$. It follows that if $u,v\in X$ and $[u]=[v]$ then
$[\phi(u)]=[\phi(v)]$. Applying $\phi^{-1}$ to the latter equality we see that
$[u]=[v]$ if and only if $[\phi(u)]=[\phi(v)]$. Since $\tc$ and $\phi$ are
graph homomorphisms it follows that $\phi_\tc=\tc\circ \phi$ 
is an
automorphism of $\G^\tc$ as a labelled graph: 
that is $\phi_\tc\in \Aut(\G^\tc)$.
Denote by $\Aut(\tc)$ the map which takes $\phi \in \Aut(\G)$ to
$\phi_\tc\in \Aut(\G^c)$.
For $[v]\in X^\tc$ let $S_{\mu(v)}$ denote the symmetric group of degree $\mu(v)$.

\begin{prop}
The map $\Aut(\tc)$ is an epimorphism from $\Aut(\G)$ to $\Aut(\G^\tc)$.
There is a split short exact sequence
\begin{equation}\label{eq:exaut}
1\maps \prod_{[v]\in X^\tc}S_{\mu(v)}\maps \Aut(\G)\xrightarrow{\Aut(\tc)} \Aut(\G^c)\maps 1.
\end{equation}
\end{prop}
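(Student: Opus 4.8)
The plan is to verify in turn: (1) $\Aut(\tc)$ is a group homomorphism; (2) it is surjective; (3) its kernel is the stated direct product of symmetric groups; (4) the sequence splits. Step (1) is immediate from the definition: for $\phi,\psi\in\Aut(\G)$ and $v\in X$ one has $(\phi\psi)_\tc([v])=[\phi\psi(v)]=\phi_\tc([\psi(v)])=\phi_\tc\psi_\tc([v])$, and it was already shown before the statement that $\Aut(\tc)$ really does take values in $\Aut(\G^\tc)$.

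The key structural fact I would isolate first is that the edge set of $\G$ is completely determined by $\G^\tc$ together with its labelling: for $u,v\in X$ with $[u]\neq [v]$, Proposition \ref{prop:comp} and the definition of $\G^\tc$ give $(u,v)\in E(\G)$ if and only if $([u],[v])\in E(\G^\tc)$; and for $u\neq v$ with $[u]=[v]$, Lemma \ref{lem:equiv} shows $(u,v)\in E(\G)$ if and only if $[u]$ is a simplex with $|[u]|\ge 2$, i.e.\ $u\in M_\perp$, a condition depending only on the vertex $[u]$ of $\G^\tc$ and its label. Two consequences follow. First, any bijection of $X$ preserving every $\sim$-class setwise is automatically a graph automorphism of $\G$. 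Second, given $\bar\psi\in\Aut(\G^\tc)$, since $\bar\psi$ preserves the label $\mu$ we have $|\tc^{-1}(w)|=|\tc^{-1}(\bar\psi(w))|$ for each vertex $w$ of $\G^\tc$; choosing any bijections $\theta_w\colon\tc^{-1}(w)\to\tc^{-1}(\bar\psi(w))$ and setting $\psi(v)=\theta_{[v]}(v)$ yields, by the same two facts (checked case by case according as $[u]=[v]$ or not, using that $\bar\psi$ and $\bar\psi^{-1}$ are automorphisms of $\G^\tc$), a graph automorphism $\psi$ of $\G$ with $\psi_\tc=\bar\psi$. This gives surjectivity in (2).

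For (3), $\phi\in\ker\Aut(\tc)$ if and only if $[\phi(v)]=[v]$ for all $v$, i.e.\ $\phi$ preserves each class $\tc^{-1}([v])$ setwise; conversely every such $\phi$ lies in $\Aut(\G)$ by the structural fact above. Hence $\ker\Aut(\tc)$ is exactly the group of class-preserving permutations of $X$, which, the classes being finite and pairwise disjoint with $|\tc^{-1}([v])|=\mu(v)$, is the internal direct product $\prod_{[v]\in X^\tc}\operatorname{Sym}(\tc^{-1}([v]))\cong\prod_{[v]\in X^\tc}S_{\mu(v)}$; together with surjectivity this is exactness of \eqref{eq:exaut}. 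For the splitting (4) I would fix once and for all a linear order on each class $\tc^{-1}(w)$, identifying it with $\{1,\dots,\mu(w)\}$, and define $s\colon\Aut(\G^\tc)\to\Aut(\G)$ by declaring $s(\bar\psi)$ to send the $i$-th element of $\tc^{-1}(w)$ to the $i$-th element of $\tc^{-1}(\bar\psi(w))$. By the argument for (2), $s(\bar\psi)\in\Aut(\G)$ and $\Aut(\tc)(s(\bar\psi))=\bar\psi$; comparing $i$-th elements shows $s(\bar\psi_1\bar\psi_2)=s(\bar\psi_1)s(\bar\psi_2)$ and $s(\id)=\id$, so $s$ is a homomorphic section and \eqref{eq:exaut} splits (exhibiting $\Aut(\G)$ as the corresponding semidirect product). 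I expect the only point needing genuine care — the main obstacle — to be the structural lemma that class-preserving permutations, and the lifts $\psi$ above, are honestly automorphisms of $\G$; once the all-or-nothing behaviour of edges between distinct $\sim$-classes (from Proposition \ref{prop:comp}) and within a $\sim$-class (from Lemma \ref{lem:equiv}) is recorded, the remainder is routine bookkeeping.
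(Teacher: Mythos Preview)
Your proof is correct and follows essentially the same approach as the paper's: both identify the kernel as the class-preserving permutations (using that each $\sim$-class is a simplex or a free co-simplex), and both split the sequence by fixing orderings of the classes and lifting $\bar\psi$ via the order-preserving bijections. Your explicit isolation of the ``structural fact'' (that edges of $\G$ are determined by $\G^\tc$ together with its labelling) is a clean way to package what the paper leaves implicit, but the underlying argument is the same.
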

\begin{proof}
We have seen that $\Aut(\tc)$ is a map from $\Aut(\G)$ to $\Aut(\G^c)$.
If $\phi, \phi^\prime\in \Aut(\G)$ then
$(\phi\circ \phi^\prime)_\tc([v])=[\phi\circ\phi^\prime(v)]
=\phi_\tc([\phi^\prime(v)])=\phi_\tc\circ \phi_\tc^\prime([v])$,
for all $[v]\in X^\tc$.
Hence $\Aut(\tc)$ is a homomorphism.

Let $[v]\in X^c$ and consider the subgraph $\G([v])$ of $\G$. If
$\phi \in \Aut(\G([v]))$ then we may extend $\phi$ to $\G$ by setting
$\phi(u)=u$, for all $u\notin [v]$. Hence we may regard $\Aut(\G([v]))$
as a subgroup of $\Aut(\G)$. If $u,v\in X$ and $[u]\neq [v]$ then
$\phi\circ\phi^\prime=\phi^\prime\circ \phi$, for all $\phi\in \Aut(\G([u]))$
and $\phi^\prime\in \Aut(\G([v]))$. Moreover, as $[u]\cap [v]=\emptyset$
we have $\Aut(\G([u]))\cap \Aut(\G([v]))=1$. Therefore
$\Aut(\G)$ contains the subgroup
$
A=\prod_{[v]\in X^c}\Aut(\G([v])).
$
If $\phi\in A$ then $\phi(v)\in [v]$, for all $v\in [v]$ and for
all $[v]\in X^\tc$. Therefore $\phi\in \ker(\Aut(\tc))$ and so
$A\subseteq \ker(\Aut(\tc))$. Conversely if $\phi \in \ker(\Aut(\tc))$
then $[\phi(v)]=[v]$ so $\phi(v)\in [v]$, for all $v\in X$. Hence
if $\phi \in \ker(\Aut(\tc))$ then $\phi|_{[v]}\in \Aut(\G([v]))$ and
so $\phi \in A$. Therefore $A=\ker(\Aut(\tc))$. For all $[v]\in X^c$ the
graph $\G([v])$ is either a simplex or a free co-simplex so
$\Aut(\G([v]))$ is isomorphic to the symmetric group
$S_{\mu(v)}$
of degree $\mu(v)$. Therefore $\prod_{[v]\in X^c}S_{\mu(v)}\cong A$.
To show that the sequence \eqref{eq:exaut}
is exact it remains only to show that
$\Aut(\tc)$ is surjective. However we shall first
construct an embedding $\i:\Aut(\G^\tc)\maps \Aut(\G)$.

Fix a transversal $V=\{v_1,\ldots, v_n\}$ for the map $\tc:\G\maps
\G^c$. For $i$ such that $1\le i\le n$ choose an ordering
$(v_{i,1},\ldots v_{i,\mu(v_i)})$ of the class $[v_i]$, with 
$v_i=v_{i,1}$.
Then $X=\sqcup_{i=1}^n\sqcup_{j=1}^{\mu(v_i)}\{v_{i,j}\}$.
For $i,k$ such that
$1\le i\le k\le n$ and $l(v_i)=l(v_k)$,
define a map $\t_{i,k}:[v_i]\maps [v_k]$ by $\t_{i,k}(v_{i,j})=v_{k,j}$,
$j=1,\ldots, \mu(v_i)$. Note that, as  $l(v_i)=l(v_k)$ the map
$\t_{i,k}$ is a graph isomorphism from $\G([v_i])$ to $\G([v_k])$.
If $\t_{i,k}$ is defined and $i<k$ we define $\t_{k,i}=\t_{i,k}^{-1}$.
Furthermore
if $\t_{i,k}$ and $\t_{k,l}$ are both defined then so is $\t_{i,l}$ and
by construction $\t_{i,l}=\t_{k,l}\circ \t_{i,k}$.

Now let $\phi_\tc\in \Aut(\G^\tc)$ and define a map $\phi$ of $X$ to
itself as follows. Let $v\in X$. Then
$[v]=[v_i]$, so $v=v_{i,j}$, for unique $i$ and $j$. There is a unique
$k$ such that $\phi_\tc([v_i])=[v_k]$ and
as $l(\phi_c(v_i))=l(\phi_c(v_i))$ the map $\t_{i,k}$ is defined.
Set $\phi(v)=\t_{i,k}(v_{i,j})
=v_{k,j}$. As all the $\t_{i,k}$ are isomorphisms and as $\phi_c$ is
a graph automorphism it follows that $\phi$ is a graph automorphism. Thus
$\i:\phi_c\maps \phi$   is a map from $\Aut(\G^c)$ to $\Aut(\G)$.
That $\i$ is an injective homomorphism follows directly from the
definition.

If $\phi_c\in \Aut(\G^\tc)$ and $[v]\in X^\tc$ then
$\Aut(\tc)\circ \i (\phi_c)$ maps $[v]$ to $[\i\phi_c(v)]=\phi_c([v])$,
so $\Aut(\tc)\circ \i$ is the identity on $\Aut(\G^c)$.
This implies that $\Aut(\tc)$ is surjective; so the sequence
\eqref{eq:exaut}
is exact.
Furthermore
$\i$ is a transversal for $\Aut(\tc)$ and
so \eqref{eq:exaut}
splits, as claimed.
\end{proof}
The compression $\G^c$ of $\G$ gives rise to
a natural decomposition of $G(\G)$ which we now describe;
using the following  generalisation
of a partially commutative group. Let
$\G$ be a graph and to each vertex of $\G$ associate a group
$G_v$. Let $F=\ast_{v\in V(\G)}G_v$ and let $N$ be the normal
subgroup of $F$ generated by all elements of the form
$[g_u,g_v]$, where $g_u\in G_u$, $g_v\in G_v$ and $u$ and $v$ are
joined by an edge of $\G$. The group
$G=F/N$ is called a  {\em partially commutative product
 of groups}. If all the vertex groups $G_v$ are infinite cyclic groups
then $G$ is a partially commutative group.
In the case in question
take $\G^c$ to be the underlying graph and associate
the the partially commutative group with commutation graph $\G([v])$ to the vertex $[v]$.
The vertex groups are all then free Abelian
groups or free groups.

\end{document}